\crefname{equation}{}{}
\newtheorem{theorem}{Theorem}[section]
\newenvironment{customthm}[1]
  {\innercustomthm}
  {\endinnercustomthm}
\newtheorem{prop}[theorem]{Proposition}
\newtheorem{cor}[theorem]{Corollary}
\newtheorem{lemma}[theorem]{Lemma}
\theoremstyle{definition}
\newtheorem{dfn}[theorem]{Definition}
\newtheorem{rmk}[theorem]{Remark}
\newtheorem{ex}[theorem]{Example}
\DeclareMathOperator{\Hom}{\mathsf{Hom}}
\DeclareMathOperator{\Ext}{\mathsf{Ext}}
\DeclareMathOperator{\Tor}{\mathsf{Tor}}
\DeclareMathOperator{\RHom}{\mathbf{R}\mathsf{Hom}}
\newcommand{\Acal}{\mathcal{A}}
\newcommand{\Bcal}{\mathcal{B}}
\newcommand{\Ccal}{\mathcal{C}}
\newcommand{\Dcal}{\mathcal{D}}
\newcommand{\Gcal}{\mathcal{G}}
\newcommand{\Kcal}{\mathcal{K}}
\newcommand{\Lcal}{\mathcal{L}}
\newcommand{\Ocal}{\mathcal{O}}
\newcommand{\Scal}{\mathcal{S}}
\newcommand{\Tcal}{\mathcal{T}}
\newcommand{\Xcal}{\mathcal{X}}
\newcommand{\Qbb}{\mathbb{Q}}
\newcommand{\Zbb}{\mathbb{Z}}
\newcommand{\D}{\mathbf{D}}
\newcommand{\cpt}{\mathsf{c}}
\newcommand{\clp}{\mathsf{clp}}
\newcommand{\op}{\mathsf{op}}
\newcommand{\fp}[1]{\mathsf{fp}(#1)} 
\newcommand{\Mod}[1]{\mathsf{Mod}\mbox{-}#1}
\newcommand{\Spec}[1]{\mathsf{Spec}(#1)}
\newcommand{\SH}{\mathcal{SH}}
\newcommand{\Spech}[1]{\mathsf{Spec}^\mathsf{h}(#1)}
\newcommand{\Supp}{\mathsf{Supp}}
\newcommand{\SuppBFS}{\mathsf{Supp}_{\textnormal{BFS}}}
\newcommand{\SuppN}{\mathsf{Supp}_{\textnormal{Naï}}}
\newcommand{\supp}{\mathsf{supp}}
\newcommand{\supph}{\mathsf{supp}^\mathsf{h}}
\newcommand*{\Perp}[1]{{}^{\perp_{#1}}}
\newcommand{\hocolim}{\mathsf{hocolim}}
\newcommand{\colim}{\mathsf{colim}}
\newcommand{\holim}{\mathsf{holim}}
\newcommand{\Loc}[1]{\langle #1 \rangle^{\oplus}}
\newcommand{\DefBW}[1]{\mathsf{Def}^{\otimes}(#1)}
\newcommand{\Def}[1]{\langle #1 \rangle^{\textsf{def}}}
\newcommand{\Deft}[1]{\langle #1 \rangle^{\textsf{def}}}
\newcommand{\thick}[1]{\langle #1 \rangle}
\newcommand{\Ker}{\mathsf{Ker}}
\newcommand{\pp}{\mathfrak{p}}
\newcommand{\PP}{\mathfrak{p}}
\newcommand{\MM}{\mathfrak{m}}
\newcommand{\qq}{\mathfrak{q}}
\newcommand{\QQ}{\mathfrak{q}}
\newcommand{\mm}{\mathfrak{m}}
\newcommand{\nn}{\mathfrak{n}}
\newcommand{\qc}{\mathsf{qc}}
\newcommand{\yo}{\mathbf{y}}
\newcommand{\toeq}{\xrightarrow{\cong}}
\newcommand{{\tst}}{\textit{t}-}
\newcommand{\TC}{\mathsf{(TC)}}
\newcommand{\RTC}{\mathsf{(RTC)}}
\newcommand{\NoSC}{\mathsf{(NoSC)}}
\newcommand{\SLP}{\mathsf{(SLP)}}
\newcommand{\HSLP}{\mathsf{(HSLP)}}
\newcommand{\newterm}[1]{\textit{#1}}
\title[Telescope conjecture via homological residue fields]{Telescope conjecture via homological residue fields with applications to schemes}
\author{Michal Hrbek}
\address[M. Hrbek]{Institute of Mathematics of the Czech Academy of Sciences, \v{Z}itn\'{a} 25, 115 67 Prague, Czech Republic}
\email{hrbek@math.cas.cz}
\subjclass[2020]{Primary: 18G80, 18N55; Secondary: 13H99, 13B35.}
\thanks{The author was supported by the GAČR project 23-05148S and the Academy of Sciences of the Czech Republic (RVO 67985840).}
\begin{document}
\begin{abstract}
    For a big tt-category, we give a characterization of the Telescope Conjecture $\TC$ in terms of definable $\otimes$-ideals generated by homological residue fields. We formulate a stalk-locality property of $\TC$ and prove that it holds in the case of the derived category of a quasi-compact quasi-separated scheme, strengthening a result \cite{HHZ21}. As an application, we find strong links between $\TC$ and separation properties of the adic topology on local rings. This allows us to recover known examples and counterexamples of when $\TC$ holds over a scheme, as well as to construct some new ones.
\end{abstract}
\maketitle
\tableofcontents
\section*{Introduction}
The Telescope Conjecture $\TC$ was originally formulated by Ravenel \cite{Rav84} and asks whether the smashing localizations of the stable homotopy category of spectra $\SH$ given by the Morava E-theories coincide with those given by telescopes of finite spectra. The $\TC$ for $\SH$ has had a long and rocky history, but has been recently settled in the negative by Burklund, Hahn, Levy, and Schlank \cite{BHLS23}. In an equivalent formulation, $\TC$ asks if every smashing localization of $\SH$ is equivalent to a localization away from a set of finite spectra, a property formulated by Bousfield \cite{Bou79}. In this way, it makes sense to study $\TC$ also in other stable homotopy categories, e.g. the derived categories of schemes. There, it turns out that $\TC$ is a property rather than a conjecture, as it is satisfied by some schemes including all of the noetherian ones (as shown by Neeman \cite{Nee92} in the affine case, and Alonso, Jeremías, and Souto \cite{AJS04} in the non-affine case), but fails in general (Keller's counterexample \cite{Kel94}).

Motivated by the results in algebraic geometry, a supply of locality results for $\TC$ has been discovered, allowing to establish $\TC$ in more general settings. Balmer and Favi formulated $\TC$ in the generality of tensor triangular geometry and proved that it is an affine-local property, that is, $\TC$ can be checked locally on covers of the Balmer spectrum by quasi-compact open sets. In an algebro-geometric setting, Antieau \cite{Ant14} later even promoted this to an étale-locality. In particular, the study of $\TC$ in the derived category $\D(X)$ of a quasi-compact, quasi-separated scheme $X$ reduces to the case of an affine scheme. Hu, Zhu, and the author found in \cite{HHZ21} that $\TC$ in $\D(X)$ is even a stalk-local property, that is, its validity depends only on it being satisfied in the derived categories $\D(\Ocal_x)$ of the stalks, reducing the problem to the case of a local commutative ring. In one of the main results of this paper, we prove the following refinement.
\begin{customthm}{A}[\cref{tc-scheme}]\label{thmA}
  Let $X$ be a quasi-compact and quasi-separated scheme. Then the following are equivalent:
  \begin{enumerate}
    \item[(i)] $\TC$ holds for $\D(X)$,
    \item[(ii)] for each $x \in X$, the residue field $k(x)$ generates $\D(\Ocal_x)$ as a definable $\otimes$-ideal.
  \end{enumerate}
\end{customthm}
The computation of the definable $\otimes$-ideal generated by the residue field of $k$ of a local ring $(R,\mm,k)$ is in tight connection with the properties of the $\mm$-adic topology on $R$. In fact, essentially by Keller's example, if $\Def{k}=\D(R)$ holds in $\D(R)$ then $R$ is $\mm$-adically transfinitely separated (\cref{sep-tc}). On the other hand, $\Def{k}=\D(R)$ is implied by a stronger separation property of $R$ (\cref{psep-tc}), enjoyed e.g. by all local noetherian rings. Although we do not obtain an easy ring-theoretic criterion for a local ring to satisfy $\TC$, our method recovers most, if not all, of the known examples and counterexamples for validity of $\TC$ in $\D(R)$, including the results of Neeman \cite{Nee92}, Stevenson \cite{Ste14}, Bazzoni and Šťovíček \cite{BS17}, and Dwyer and Palmieri \cite{DP08}. We also add to the list some new examples, including a local separated ring $R$ such that $\TC$ fails in $\D(R)$ (\cref{ex-1}). In Section 5, we formulate a restricted version of the Telescope Conjecture, which holds for a commutative ring $R$ if and only if all pseudoflat ring epimorphisms over $R$ are flat (\cref{flat-tc}). In this way, \cref{ex-1} also yields an interesting example of a pseudoflat non-surjective local epimorphism $R \to S$ of commutative rings.

Before we pass to study the algebro-geometric setting, we also establish some results in the general setting of a big tt-category $\Tcal$. First, it is essential for our approach that we take an alternative, but equivalent, viewpoint on $\TC$ which focuses on definable $\otimes$-ideals instead of the smashing $\otimes$-ideals. This formulation of $\TC$ based on model theory of compactly generated triangulated categories was discovered by Krause \cite{Kr00}. In fact, the theory of purity and definable subcategories has recently found more development and applications to tt-geometry, see Bird and Williamson \cite{BW23, BW23b}, Wagstaffe \cite{WPHD}, or Prest and Wagstaffe \cite{PW23}. The purity theory also plays an important role in the recent works of Balmer, Krause, and Stevenson \cite{BKS19} and Balmer \cite{Bal20}, where a theory of homological residue fields is developed for big tt-categories, which specializes to the usual residue field objects $k(x)$ in case of $\D(X)$ and to Morava K-theories in the case of $\SH$. Assuming that Balmer's ``Nerves of Steel'' Conjecture holds for $\Tcal$, we show in \cref{tc-general} by an application of Balmer's Tensor Nilpotence Theorem of \cite{Bal20} that it is sufficient to check $\TC$ in $\Tcal$ just on the definable $\otimes$-ideals generated by the homological residue fields lying over each Thomason subset of the Balmer spectrum. In Section 3, we formulate the stalk-locality principle for $\TC$ in $\Tcal$ and characterize in \cref{cg-stalklocal} the situation in which the stalk-locality holds in each compact localization of $\Tcal$. We obtain the following formulation of \cref{thmA} in a general big tt-category.
\begin{customthm}{B}[\cref{tc-stalklocal}]\label{thmB}
  Let $\Tcal$ be a big tt-category satisfying the ``Nerves of Steel'' Conjecture and certain stalk-locality principle formulated in \cref{ss-stalkloc}. Then the following are equivalent:
  \begin{enumerate}
    \item[(i)] $\TC$ holds for $\Tcal$,
    \item[(ii)] for each $\pp \in \Spec{\Tcal^\cpt}$, the homological residue field object $E_\pp$ generates the stalk tt-category $\Tcal_\pp$ as a definable $\otimes$-ideal.
  \end{enumerate}
\end{customthm}

We do not know if the stalk-locality principle holds in all big tt-categories. Apart from $\D(X)$ (\cref{biloc-stalks}), it also holds in $\SH$ (see \cref{exSH} for the interpretation of \cref{thmB} in this setting), and more generally, whenever the Balmer-Favi-Sanders support detects vanishing in $\Tcal$ (\cref{exBFS}). It is another open question whether the Balmer-Favi-Sanders support always detects vanishing. In fact, this is open even in the case $\D(X)$ for a general $X$ and our proof of \cref{biloc-stalks} indeed takes a different route.

\subsection*{Acknowledgement} The author would also like to express thanks to Isaac Bird, Sergio Pavon, Enrico Sabatini, and Jordan Williamson for useful discussions regarding the paper. The author is grateful to the anonymous referee for a careful reading of the manuscript and providing a lot of very useful feedback.
\section{Definable $\otimes$-ideals and Telescope Conjecture}
In the next three sections, we will work in the generality of a tensor triangulated category in the sense of Balmer \cite{Bal05, Bal1, BF11}, some basic facts about which we briefly recall now. 
\subsection{Small tt-categories} Let $\Kcal$ be a triangulated category with suspension functor $\Sigma$. We say that $\Kcal$ is a \newterm{tensor triangulated category (tt-category)} if it is equipped with a symmetric monoidal product $- \otimes -: \Kcal \times \Kcal \to \Kcal$ with unit $1 \in \Kcal$ and such that for each $x \in \Kcal$, the functor $x \otimes -: \Kcal \to \Kcal$ is a triangulated functor. All subcategories in this paper are automatically full, additive, and isomorphism-closed. A \newterm{thick $\otimes$-ideal} of a tt-category $\Kcal$ is a thick subcategory $\Scal$ such that $x \otimes y \in \Scal$ for any $x \in \Kcal$ and $y \in \Scal$. We call a thick $\otimes$-ideal $\Scal$ \newterm{prime} if for any $x,y \in \Kcal$ such that $x \otimes y \in \Scal$, either $x \in \Scal$ or $y \in \Scal$.  To a skeletally small tt-category $\Kcal$, Balmer associated a topological space $\Spec{\Kcal}$ we call the \newterm{Balmer spectrum}. By definition, this is the set of all prime thick $\otimes$-ideals in $\Kcal$. The \newterm{support} of an object $x \in \Kcal$ is defined as $\supp(x) = \{\PP \in \Spec{\Kcal} \mid x \not\in \PP\}$. The crucial property of the theory is that this support data classifies the radical thick $\otimes$-ideals of $\Kcal$. A thick $\otimes$-ideal $\Scal$ is \newterm{radical} if $\otimes^n x \in \Scal$ for some $n>0$ implies $x \in \Scal$ for any $x \in \Kcal$; every thick $\otimes$-ideal is radical if every object of $\Kcal$ is rigid (see below), which will always be the case for compact objects in our further setting. The Balmer spectrum is a topological space whose topology is given by a basis of closed subsets of the form $\supp(x)$ for all $x \in \Kcal$, this makes $\Spec{\Kcal}$ into a spectral space. A subset $V$ of $\Spec{\Kcal}$ is called \newterm{Thomason} if it can be written as a union of closed subsets with quasi-compact complements in $\Spec{\Kcal}$. Then the assignment $V \mapsto \Kcal_V = \{x \in \Kcal \mid \supp(x) \subseteq V\}$ yields a bijection between Thomason subsets of $\Spec{\Kcal}$ and the radical thick $\otimes$-ideals in $\Kcal$. The main reference is \cite{Bal05} here.

\subsection{Big tt-categories} Let $\Tcal$ be a \newterm{big tt-category}, by which we mean a rigidly-compactly generated tensor triangulated category, the base setting of \cite{BF11}. By definition, this means that $\Tcal$ has the following attributes: First, it is a compactly generated triangulated category, that is, $\Tcal$ is cocomplete and the subcategory $\Tcal^\cpt$ of its compact objects is skeletally small and generates $\Tcal$. Next, $\Tcal$ is a tt-category and $\Tcal^\cpt$ is its tt-subcategory, meaning that $- \otimes -$ restricts to $\Tcal^\cpt$ and $1 \in \Tcal^\cpt$. Finally, we assume $- \otimes -$ to be closed, which says that $X \otimes -$ admits a right adjoint $[X,-]: \Tcal \to \Tcal$ for all $X \in \Tcal$, and that each $x \in \Tcal^\cpt$ is rigid in the sense that the natural map $[x,1] \otimes Y \to [x,Y]$ is an isomorphism for any $Y \in \Tcal$. In particular, the contravariant endofunctor $(-)^* := [-,1]$ of $\Tcal$ restricts to a duality $(\Tcal^\cpt)^\op \toeq \Tcal^\cpt$. Next we muster the following bestiary of certain subcategories of a big tt-category $\Tcal$. A thick $\otimes$-ideal $\Ccal$ of $\Tcal$ is called:
\begin{itemize}
  \item a \newterm{localizing $\otimes$-ideal}, if $\Ccal$ is closed under all coproducts;
  \item a \newterm{strict localizing $\otimes$-ideal} if $\Ccal$ is localizing and the inclusion $\Ccal \xhookrightarrow{} \Tcal$ has a right adjoint;
  \item a \newterm{smashing $\otimes$-ideal} if $\Ccal$ is strict localizing and the right adjoint to the inclusion  $\Ccal \xhookrightarrow{} \Tcal$ preserves coproducts;
  \item a \newterm{definable $\otimes$-ideal}, if $\Ccal$ is a definable subcategory, that is, there is a set $\Phi$ of morphisms in $\Tcal^\cpt$ such that $\Ccal = \{X \in \Tcal \mid \Hom_\Tcal(f,X) = 0 ~\forall f \in \Phi\}$.
\end{itemize}
A subcategory $\Ccal$ of $\Tcal$ is called a \newterm{thick $\otimes$-coideal} if it is a thick subcategory closed under $[X,-]$ for any $X \in \Tcal$. A $\otimes$-coideal $\Ccal$ is called:
\begin{itemize}
 \item a \newterm{colocalizing $\otimes$-coideal}, if $\Ccal$ is closed under all products;
  \item a \newterm{strict colocalizing $\otimes$-coideal} if $\Ccal$ is colocalizing and the inclusion $\Ccal \xhookrightarrow{} \Tcal$ has a left adjoint;
  \item a \newterm{cosmashing $\otimes$-coideal} if $\Ccal$ is strict localizing and the left adjoint to the inclusion  $\Ccal \xhookrightarrow{} \Tcal$ preserves products.
\end{itemize}
\subsection{} Given a subcategory $\Ccal$ of $\Tcal$, we let $\thick{\Ccal}$ denote the smallest thick $\otimes$-ideal of $\Tcal$ containing $\Ccal$, $\Loc{\Ccal}$ denote the smallest localizing $\otimes$-ideal containing $\Ccal$, and $\Deft{\Ccal}$ denote the smallest definable $\otimes$-ideal containing $\Ccal$, the latter closure operator is well-defined using \cite[Theorem 5.2.14]{WPHD}. If $\Ccal = \{X\}$ for some object $X \in \Tcal$, we drop the curly brackets. A \newterm{semiorthogonal $\otimes$-decomposition} of $\Tcal$ is a pair $(\Lcal,\Ccal)$ of thick $\otimes$-ideals such that $\Hom_{\Tcal}(\Lcal,\Ccal) = 0$ and $\Tcal = \Lcal \star \Ccal$, where 
$$\Lcal \star \Ccal = \{X \in \Tcal \mid \exists \text{ triangle } L \to X \to C \mathrel{\leadsto} \text{ with } L \in \Lcal, C \in \Ccal\}.$$
If $\Scal$ is a subcategory of $\Tcal$ then denote 
$$\Scal\Perp{} = \{X \in \Tcal \mid \Hom_\Tcal(S,\Sigma^i X) = 0 ~\forall S \in \Scal, ~\forall i \in \Zbb\},$$ 
$$\Perp{}\Scal = \{X \in \Tcal \mid \Hom_\Tcal(X,\Sigma^i S) = 0 ~\forall S \in \Scal, ~\forall i \in \Zbb\}.$$  
If $(\Lcal,\Ccal)$ is a semiorthogonal $\otimes$-decomposition then $\Ccal = \Lcal\Perp{}$ and $\Lcal = \Perp{}\Ccal$. The following proposition is standard, see \cite[Remark 2.11]{BCHS23} and \cite[Theorem 2.6]{BF11}.
\begin{prop}\label{loc-coloc}
  The following collections are in mutual bijection:
  \begin{enumerate}
    \item[(i)] strict localizing $\otimes$-ideals $\Lcal$ of $\Tcal$,
    \item[(ii)] strict colocalizing $\otimes$-coideals $\Ccal$ of $\Tcal$, 
    \item[(iii)] semiorthogonal $\otimes$-decompositions $(\Lcal,\Ccal)$ of $\Tcal$. 
  \end{enumerate}
\end{prop}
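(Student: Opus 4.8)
The statement is the tensor-triangular version of the standard equivalence between Bousfield localizations, Bousfield colocalizations, and semiorthogonal decompositions of a triangulated category, so the plan is to recall that equivalence and to track the interaction with the monoidal structure. Concretely, the three collections will be matched up by sending a strict localizing $\otimes$-ideal $\Lcal$ to $(\Lcal,\Lcal\Perp{})$, a strict colocalizing $\otimes$-coideal $\Ccal$ to $(\Perp{}\Ccal,\Ccal)$, and a semiorthogonal $\otimes$-decomposition $(\Lcal,\Ccal)$ to either of its two pieces. As recorded just before the statement, a semiorthogonal $\otimes$-decomposition $(\Lcal,\Ccal)$ always satisfies $\Ccal=\Lcal\Perp{}$ and $\Lcal=\Perp{}\Ccal$, so once these assignments are shown to be well defined they are automatically mutually inverse; hence the task is to verify (a) that a semiorthogonal $\otimes$-decomposition supplies the two adjoints, and (b) that the orthogonal complements above are again the required kind of thick $\otimes$-ideal.

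For (a): given $(\Lcal,\Ccal)$ with $\Hom_\Tcal(\Lcal,\Ccal)=0$ and $\Tcal=\Lcal\star\Ccal$, the vanishing makes the triangle $L\to X\to C\mathrel{\leadsto}$ of any $X$ (with $L\in\Lcal$, $C\in\Ccal$) unique up to unique isomorphism, hence functorial; a routine check identifies $X\mapsto L$ with a right adjoint of $\Lcal\hookrightarrow\Tcal$ and $X\mapsto C$ with a left adjoint of $\Ccal\hookrightarrow\Tcal$. Since $\Lcal\hookrightarrow\Tcal$ has a right adjoint it preserves coproducts and $\Lcal$ is closed under coproducts, while $\Ccal\hookrightarrow\Tcal$ has a left adjoint and so is closed under products; thus $\Lcal$ is strict localizing and $\Ccal$ is strict colocalizing. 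Conversely, from a strict localizing $\otimes$-ideal $\Lcal$ with right adjoint $\gamma$ to the inclusion, the cone $\lambda X:=\Cone(\gamma X\to X)$ satisfies $\gamma\lambda X=0$ by idempotence of the colocalization $\gamma$, i.e.\ $\lambda X\in\Lcal\Perp{}$; thus $\Tcal=\Lcal\star\Lcal\Perp{}$, and $\Hom_\Tcal(\Lcal,\Lcal\Perp{})=0$ is the definition of $\Perp{}$. The dual argument handles a strict colocalizing $\otimes$-coideal.

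For (b): one containment is formal from the closed structure. If $\Lcal$ is a $\otimes$-ideal, $X\in\Lcal\Perp{}$, $Y\in\Tcal$ and $S\in\Lcal$, then $\Hom_\Tcal(S,\Sigma^i[Y,X])\cong\Hom_\Tcal(S\otimes Y,\Sigma^iX)=0$ since $S\otimes Y\in\Lcal$, so $[Y,X]\in\Lcal\Perp{}$; hence $\Lcal\Perp{}$ is a thick $\otimes$-coideal. Symmetrically, if $\Ccal$ is a $\otimes$-coideal, then for $T\in\Perp{}\Ccal$ one has $\Hom_\Tcal(T\otimes Y,\Sigma^iC)\cong\Hom_\Tcal(T,\Sigma^i[Y,C])=0$ since $[Y,C]\in\Ccal$, so $\Perp{}\Ccal$ is a thick $\otimes$-ideal. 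The remaining point — that $\Lcal\Perp{}$ is moreover a thick $\otimes$-ideal (and dually $\Perp{}\Ccal$ a thick $\otimes$-coideal) — is where rigid-compact generation enters: for $c\in\Tcal^\cpt$ rigidity gives $X\otimes c\cong[c^*,X]\in\Lcal\Perp{}$ by the coideal property, so the thick subcategory $\{Y\in\Tcal\mid X\otimes Y\in\Lcal\Perp{}\}$ contains $\Tcal^\cpt$, and one is left to promote this to all of $\Tcal$. I expect this promotion to be the only real obstacle, and in a complete write-up I would either supply the missing closure argument from $\Tcal^\cpt$ or simply appeal to \cite[Theorem 2.6]{BF11} and \cite[Remark 2.11]{BCHS23}; the three assignments are then mutually inverse by the identities $\Ccal=\Lcal\Perp{}$ and $\Lcal=\Perp{}\Ccal$.
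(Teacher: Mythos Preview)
Your proposal is essentially the paper's own approach, only far more explicit: the paper's proof consists of the single line ``Follows from \cite[Remark 2.11]{BCHS23} and \cite[Theorem 2.6]{BF11},'' and you arrive at precisely those references after unpacking the standard semiorthogonal-decomposition machinery and isolating the one nontrivial step. Your parts (a) and the first half of (b) are clean and correct.

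On the flagged gap: you are right that it is the crux, and also right to hedge. The promotion argument you sketch does \emph{not} close it directly. The subcategory $\{Y\in\Tcal\mid X\otimes Y\in\Lcal\Perp{}\}$ is thick, but to pass from $\Tcal^\cpt$ to $\Tcal$ you would need it to be closed under coproducts, and that fails because $\Lcal\Perp{}$ is in general only product-closed, not coproduct-closed (coproduct-closure would force $\Lcal$ to be smashing). So the fallback to \cite[Theorem 2.6]{BF11} is not optional here; the argument there goes through the internal hom and the idempotent triangle rather than through a localizing-subcategory argument on the tensor variable.

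One small correction: the ``dual'' remaining point is not that $\Perp{}\Ccal$ be a $\otimes$-coideal (this is never needed), but rather that $\Ccal$ itself be a $\otimes$-ideal when it is only assumed to be a strict colocalizing $\otimes$-coideal. This, however, follows from the main point once it is established: apply it with $\Lcal=\Perp{}\Ccal$ (which you have already shown is a strict localizing $\otimes$-ideal) to get $\Ccal=(\Perp{}\Ccal)\Perp{}$ a $\otimes$-ideal.
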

\subsection{Definable $\otimes$-ideals}
As definable $\otimes$-ideals will play a crucial role, let us gather some facts about them here. Following Krause \cite{Kr00}, $\Tcal$ admits a theory of purity encoded in terms of the \newterm{restricted Yoneda functor} $\yo$. Let $\Acal = \Mod{\Tcal^\cpt}$ be the category of right $\Tcal^\cpt$-modules, that is, the category of additive functors $(\Tcal^\cpt)^\op \to \Mod \Zbb$. Then $\yo: \Tcal \to \Acal$ is defined by the rule $X \to \Hom_{\Tcal}(-,X)_{\restriction \Tcal^\cpt}$. The tensor product on $\Tcal$ extends to a unique coproduct-preserving symmetric monoidal product $- \otimes -$ on the Grothendieck category $\Acal$ such that $\yo x \otimes \yo y = \yo(x \otimes y)$ for all $x,y \in \Tcal$. This monoidal product is in addition closed, \cite{Bal20b}, let $[-,-]_\Acal: \Acal \to \Mod \Zbb$ denote the internal hom functor. Note that unlike for the tensor product, we distinguish the internal homs $[-,-]$ of $\Tcal$ and $[-,-]_\Acal$ of $\Acal$ in notation because $\yo$ is usually not a closed functor. We also know that $\Acal$ is a locally coherent category so that the subcategory $\fp \Acal$ of its finitely presentable objects is an abelian subcategory. A morphism $f$ in $\Tcal$ is a \newterm{pure monomorphism} (resp., \newterm{pure epimorphism}) if $\yo f$ is a monomorphism (resp., epimorphism) in $\Acal$. We say that a subcategory $\Ccal$ of $\Tcal$ is closed under pure monomorphisms if for any pure monomorphism $X \to Y$ with $Y \in \Ccal$ we have $X \in \Ccal$; similarly we talk about closure under pure epimorphisms. An object $X \in \Tcal$ is \newterm{pure-injective} if $\yo X$ is injective in $\Acal$, any injective object in $\Acal$ is of this form. A subcategory $\Ccal$ of $\Tcal$ is \newterm{definable} if it is of the form $\Ccal = \{X \in \Tcal \mid \Hom_\Tcal(f,X) = 0 ~\forall f \in \Phi\}$ for a set of morphisms $\Phi$ between compact objects of $\Tcal$. 

We say that $\Tcal$ \newterm{has a model} if it is the homotopy category of a monoidal model category.
\begin{prop}\label{def-ideal}
  For a subcategory $\Dcal$ of $\Tcal$, the following are equivalent:
  \begin{enumerate}
    \item[(i)] $\Dcal$ is a definable $\otimes$-ideal,
    \item[(ii)] there is a smashing $\otimes$-ideal $\Lcal$ such that $\Dcal = \Lcal\Perp{}$,
    \item[(iii)] $\Dcal$ is simultaneously a localizing $\otimes$-ideal and a strict colocalizing $\otimes$-coideal.
  \end{enumerate}
  In addition, consider the condition
  \begin{enumerate}
    \item[(iv)]$\Dcal$ is a thick $\otimes$-ideal closed under products and pure monomorphisms.
  \end{enumerate}
  Then $(i) \implies (iv)$ and the converse is true if $\Tcal$ has a model.
\end{prop}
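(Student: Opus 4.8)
The plan is to prove $(i)\Rightarrow(iv)$, $(i)\Leftrightarrow(ii)$ and $(ii)\Leftrightarrow(iii)$, and then $(iv)\Rightarrow(i)$ under the model hypothesis. The formal skeleton is \cref{loc-coloc}, and the one genuinely nonformal ingredient — which I expect to be the main obstacle, and which I would import rather than reprove — is Krause's purity-theoretic description of smashing localizations \cite{Kr00}, in the monoidal form of \cite{WPHD}: a definable subcategory closed under coproducts is exactly the right orthogonal $\Scal\Perp{}$ of a smashing subcategory $\Scal$. Beyond that, everything is manipulation of orthogonals together with the rigidity of compact objects.

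First the cheap formal implications. Write $\Dcal=\{X\mid\Hom_\Tcal(f,X)=0~\forall f\in\Phi\}$ for a set $\Phi$ of morphisms of compact objects. Closure of $\Dcal$ under products is automatic, and closure under coproducts holds because $\Hom_\Tcal(c,-)$ preserves coproducts for compact $c$, so $\Hom_\Tcal(f,\bigoplus_jX_j)=\bigoplus_j\Hom_\Tcal(f,X_j)$ vanishes as soon as each summand does; and if $\alpha\colon X\to Y$ is a pure monomorphism with $Y\in\Dcal$, then $\yo\alpha$ is a monomorphism in $\Acal$, hence injective at every compact, so in the naturality square of $\yo\alpha$ at a map $f\in\Phi$ the vertical maps are injective while the bottom row $\Hom_\Tcal(f,Y)$ is zero, forcing $\Hom_\Tcal(f,X)=0$. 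Since thickness and the $\otimes$-ideal property are built into $(i)$, this yields $(i)\Rightarrow(iv)$. For $(ii)\Leftrightarrow(iii)$ I would apply \cref{loc-coloc}: a strict localizing $\otimes$-ideal $\Lcal$ corresponds to the strict colocalizing $\otimes$-coideal $\Dcal=\Lcal\Perp{}$, which is in particular a thick $\otimes$-ideal, and inspecting the localization triangle of a coproduct shows in the usual way that the right adjoint of $\Lcal\hookrightarrow\Tcal$ preserves coproducts precisely when $\Dcal$ is closed under coproducts, that is, when $\Dcal$ is a localizing $\otimes$-ideal; so $\Lcal$ is smashing exactly when $\Dcal$ satisfies $(iii)$.

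For the core equivalence $(i)\Leftrightarrow(ii)$ the crucial preliminary is that a definable $\otimes$-ideal $\Dcal$ is automatically a thick $\otimes$-coideal: for compact $c$ one has $[c,-]\cong c^*\otimes(-)$ with $c^*$ compact, so $\Dcal$ is closed under each $[c,-]$; and for a general $Y$, writing $Y$ as the homotopy colimit of a tower $(Y_n)$ whose zeroth term and whose transition cones are coproducts of compacts, one gets $[Y,D]\cong\holim_n[Y_n,D]$ with each $[Y_n,D]\in\Dcal$ (using $[\bigoplus_\alpha c_\alpha,D]\cong\prod_\alpha[c_\alpha,D]$, thickness, and closure under products). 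Granting this, $(i)\Rightarrow(ii)$: $\Dcal$ is a coproduct-closed definable subcategory, so the cited purity theory gives $\Dcal=\Scal\Perp{}$ with $\Scal:=\Perp{}\Dcal$ smashing, and $\Scal$ is moreover a $\otimes$-ideal exactly because $\Dcal$ is a $\otimes$-coideal, via $\Hom_\Tcal(X\otimes Z,\Sigma^iD)\cong\Hom_\Tcal(Z,\Sigma^i[X,D])$. Conversely, if $\Dcal=\Lcal\Perp{}$ with $\Lcal$ a smashing $\otimes$-ideal, then $\Lcal$ is a strict localizing $\otimes$-ideal, so \cref{loc-coloc} makes $\Dcal$ a thick $\otimes$-ideal and the cited purity theory makes it definable. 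This closes the loop, so $(i)$, $(ii)$ and $(iii)$ are equivalent.

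Finally, $(iv)\Rightarrow(i)$ when $\Tcal$ has a model. Here $(iv)$ already forces closure under coproducts, since the canonical map $\bigoplus_jX_j\to\prod_jX_j$, whose image under $\yo$ is the monomorphism $\bigoplus_j\yo X_j\to\prod_j\yo X_j$, is a pure monomorphism; but the decisive point is the characterization of definable subcategories available when $\Tcal$ has a model, under which a thick subcategory closed under products and pure monomorphisms is already definable. Applying this to the thick $\otimes$-ideal $\Dcal$ from $(iv)$ shows it is a definable $\otimes$-ideal. Thus the whole proof rests on two external purity inputs — the smashing/definable correspondence of \cite{Kr00, WPHD} and this model-dependent refinement — while the remainder is a formal exercise with orthogonals and \cref{loc-coloc}.
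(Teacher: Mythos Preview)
Your proposal is correct and follows essentially the same route as the paper: both reduce $(i)\Leftrightarrow(ii)$ to the Krause--Wagstaffe smashing/definable correspondence, derive $(ii)\Leftrightarrow(iii)$ formally from \cref{loc-coloc} and the coproduct characterization of smashing, and verify $(i)\Rightarrow(iv)$ directly from the definition. Your explicit verification that a definable $\otimes$-ideal is a $\otimes$-coideal (via rigidity on compacts and a tower argument for general $Y$) unpacks what the paper absorbs into the citation of \cite{WPHD}.

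The one place where you are looser than the paper is $(iv)\Rightarrow(i)$: you invoke ``the characterization of definable subcategories available when $\Tcal$ has a model, under which a thick subcategory closed under products and pure monomorphisms is already definable'' as a black box. The result actually available in the literature (Laking \cite{Lak20}) characterizes definable subcategories as those closed under products, pure subobjects, and \emph{directed homotopy colimits}; the paper bridges this gap by first deducing closure under coproducts (your pure-mono-into-product argument), then closure under pure epimorphisms (from thickness plus pure monomorphisms), and finally closure under directed homotopy colimits in the model. So your argument is right in outline, but the black box you name is not quite the statement on record --- you should either cite Laking and supply the short bridging argument, or make clear that you are quoting a corollary of it.
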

\begin{proof}
  The bijection between $(i)$ and $(ii)$ is proved in Wagstaffe's thesis \cite[Proposition 5.2.13]{WPHD}, as a restriction of Krause's theory \cite{Kr00} to those smashing subcategories which are $\otimes$-ideals. The equivalence $(ii) \iff (iii)$ follows by \cite[Proposition 5.5.1]{Kra10} together with the proof of \cite[Proposition 5.2.13]{WPHD}. Namely, if $\Lcal$ is a smashing $\otimes$-ideal then $\Lcal\Perp{}$ is a strict $\otimes$-colocalizing coideal by the implication $(i)\implies(ii)$ of \cref{loc-coloc} and a localizing $\otimes$-ideal by the definition and \cite[Theorem 5.1.8]{WPHD}. Conversely, if $(iii)$ is satisfied then $\Lcal=\Perp{}\Dcal$ is a strict localizing $\otimes$-ideal which si smashing because $\Dcal$ is coproduct closed and using \cite[Proposition 5.5.1]{Kra10}.
  
  \noindent
  The implication $(i) \implies (iv)$ follows directly from the definition, as the class of objects vanishing under $\Hom_\Tcal(f,-)$ for a map $f$ between compacts has the desired closure properties. If $\Tcal$ has a model, then the implication $(iv) \implies (i)$ follows by the result of Laking \cite[Theorem 3.11]{Lak20}. Indeed, $(iv)$ implies that $\Dcal$ is also closed under coproducts, as for any coproduct $\coprod_{i \in I}X_i$ in $\Tcal$, the natural map $\coprod_{i \in I}X_i \to \prod_{i \in I}X_i$ is a pure monomorphism in $\Tcal$. Since $\Dcal$ is a thick subcategory closed under pure monomorphism it is also closed under pure epimorphisms. Together we obtain that $\Dcal$ is closed under directed homotopy colimits computed in the model, and then the result of Laking applies.
\end{proof}

\subsection{TTF-triples and tensor idempotents} A \newterm{TTF $\otimes$-triple} in $\Tcal$ is a triple $(\Lcal,\Dcal,\Ccal)$ such that both $(\Lcal,\Dcal)$ and $(\Dcal,\Ccal)$ are semiorthogonal $\otimes$-decompositions. Following Balmer-Favi \cite{BF11}, a \newterm{right $\otimes$-idempotent} is a morphism $1 \to \lambda$ such that $\lambda \otimes (1 \to \lambda)$ is an isomorphism. A \newterm{morphism of right $\otimes$-idempotents} $1 \to \lambda$ and $1 \to \lambda'$ is a map $\lambda \to \lambda'$ which fits into a commutative triangle. Such a map, if it exists, is unique \cite[Corollary 3.7]{BF11}. We say that right $\otimes$-idempotents $1 \to \lambda$ and $1 \to \lambda'$ are \newterm{equivalent} if there is a morphism between them which is an isomorphism.
\begin{prop}\label{ttftriples}
  The following collections are sets and are in mutual bijection:
  \begin{enumerate}
    \item[(i)] definable $\otimes$-ideals $\Dcal$ of $\Tcal$,
    \item[(ii)] smashing $\otimes$-ideals $\Lcal$ of $\Tcal$,
    \item[(iii)] cosmashing $\otimes$-coideals $\Ccal$ of $\Tcal$,
    \item[(iv)] TTF $\otimes$-triples $(\Lcal,\Dcal,\Ccal)$ of $\Tcal$,
    \item[(v)] right $\otimes$-idempotents $1 \to \lambda$ up to equivalence.
  \end{enumerate}
  The bijection between $(i)$ and $(v)$ assigns to $\Dcal$ the reflection morphism $1 \to \lambda_\Dcal$ with respect to the reflective subcategory $\Dcal$ of $\Tcal$.
\end{prop}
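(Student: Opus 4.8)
The plan is to build the chain of bijections around the equivalence $(i)\iff(ii)$ of \cref{def-ideal}, using \cref{loc-coloc} to convert between one-sided (co)reflectivity and semiorthogonal $\otimes$-decompositions, and the Balmer--Favi theory of tensor idempotents \cite{BF11} to reach $(v)$. There is nothing to do set-theoretically: $(i)$ is a set since the definable subcategories of $\Tcal$ form a set (as already invoked for the operator $\Deft{-}$ via \cite[Theorem 5.2.14]{WPHD}), and the remaining four collections will be in bijection with it.

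First I would settle $(i)\iff(ii)\iff(iv)$. By \cref{def-ideal} the map $\Dcal\mapsto\Perp{}\Dcal$ is a bijection between definable and smashing $\otimes$-ideals with inverse $\Lcal\mapsto\Lcal\Perp{}$, and a definable $\otimes$-ideal $\Dcal$ is in particular a reflective localizing $\otimes$-ideal. The point to add is that $\Dcal$ is also \emph{coreflective}: with $L$ its reflection, the ideal $\Lcal:=\Perp{}\Dcal$ is smashing, so $L$ preserves coproducts, whence $\Dcal$ (the essential image of $L$) is closed under coproducts and equals $\Loc{L(c):c\in\Tcal^\cpt}$, a localizing $\otimes$-ideal generated by a set, hence coreflective by Brown representability \cite{Kra10}. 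Therefore, for a definable $\otimes$-ideal $\Dcal$, both $(\Perp{}\Dcal,\Dcal)$ and $(\Dcal,\Dcal\Perp{})$ are semiorthogonal $\otimes$-decompositions by \cref{loc-coloc}, so $(\Perp{}\Dcal,\Dcal,\Dcal\Perp{})$ is a TTF $\otimes$-triple; conversely the middle term $\Dcal$ of any TTF $\otimes$-triple is simultaneously a strict localizing $\otimes$-ideal and a strict colocalizing $\otimes$-coideal, hence a definable $\otimes$-ideal by \cref{def-ideal}(iii), its outer terms being $\Perp{}\Dcal$ and $\Dcal\Perp{}$. Moreover, in \emph{any} TTF $\otimes$-triple $(\Lcal,\Dcal,\Ccal)$ the term $\Lcal$ is automatically smashing and $\Ccal$ automatically cosmashing: the right adjoint of $\Lcal\hookrightarrow\Tcal$ and the left adjoint of $\Ccal\hookrightarrow\Tcal$ are read off from the two decomposition triangles, and they preserve coproducts, resp.\ products, because $\Dcal$ is closed under coproducts (being localizing, as the left-hand term of $(\Dcal,\Ccal)$), resp.\ under products (being equal to $\Lcal\Perp{}$). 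This yields $(ii)\iff(iv)\iff(iii)$, injectivity in each direction again from \cref{loc-coloc}.

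For $(v)$ I would quote \cite{BF11}. Given a definable $\otimes$-ideal $\Dcal$ with reflection $L$, set $\lambda_\Dcal:=L(1)$; since $\Lcal=\Perp{}\Dcal$ is smashing, its acyclization functor is of the form $e\otimes-$ for $e:=\Gamma_\Lcal(1)$ --- this is the key input and the only place the $\otimes$-ideal hypothesis is really used --- so the localization triangle of the unit is $e\to 1\to\lambda_\Dcal$ and $L\cong\lambda_\Dcal\otimes-$; in particular $\lambda_\Dcal\otimes(1\to\lambda_\Dcal)$ is invertible, i.e.\ $1\to\lambda_\Dcal$ is a right $\otimes$-idempotent, which is the assignment described in the statement. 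Conversely, a right $\otimes$-idempotent $1\to\lambda$ completes to a triangle $e\to 1\to\lambda\to\Sigma e$ with $\lambda\otimes-$ a coproduct-preserving Bousfield localization; then $\Lcal:=\{X\mid\lambda\otimes X=0\}$ is smashing with $\Lcal\Perp{}=\{X\mid X\toeq\lambda\otimes X\}$ the associated definable $\otimes$-ideal. Since morphisms of $\otimes$-idempotents are unique \cite[Corollary 3.7]{BF11}, equivalence of right $\otimes$-idempotents corresponds exactly to equality of $\Lcal$.

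The step I expect to cost the most is checking that, for a cosmashing $\otimes$-coideal $\Ccal$, the middle term $\Perp{}\Ccal$ of the decomposition $(\Perp{}\Ccal,\Ccal)$ is genuinely a \emph{definable} $\otimes$-ideal, i.e.\ is reflective. The cosmashing hypothesis readily shows $\Perp{}\Ccal$ to be a localizing $\otimes$-ideal (via \cref{loc-coloc} together with coproduct-preservation by a left adjoint) closed under products (forced by the cosmashing condition on its kernel), but one still has to manufacture the reflection; I would get it from \cref{def-ideal}(iii) --- equivalently, by exhibiting $\Perp{}\Ccal$ as generated by a set --- rather than by a direct construction. A second, purely bookkeeping, hazard is matching each orthogonal and each adjoint with the correct decomposition triangle of a TTF $\otimes$-triple, so that the coproduct/product-preservation checks come out right; this is routine but must be done carefully.
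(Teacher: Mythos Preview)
Your overall strategy coincides with the paper's, which simply cites \cite[Theorem 3.5]{BF11}, \cite[Proposition 6.3]{LV20}, and \cref{def-ideal}. The bijections $(i)\iff(ii)$, $(i)\iff(iv)$, and the link to $(v)$ are handled correctly; in particular, producing the second half of the TTF $\otimes$-triple from a definable $\otimes$-ideal by observing that $\Dcal=\Loc{\lambda_\Dcal\otimes c:c\in\Tcal^\cpt}$ is set-generated and hence coreflective is a clean and correct argument.

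The step you yourself flag as hardest, however, has a genuine gap. Given a cosmashing $\otimes$-coideal $\Ccal$, you correctly deduce that $\Dcal=\Perp{}\Ccal$ is a strict localizing $\otimes$-ideal closed under products, and that what remains is reflectivity of $\Dcal$. Neither of your proposed routes delivers this. Appealing to \cref{def-ideal}(iii) is circular: that condition requires $\Dcal$ to be a \emph{strict} colocalizing $\otimes$-coideal, and ``strict'' is exactly the existence of the left adjoint you are trying to construct (so far you have only checked product-closure, not reflectivity, nor even that $\Dcal$ is closed under $[X,-]$). Exhibiting $\Dcal$ as generated by a set yields, via Brown representability, a \emph{right} adjoint to the inclusion $\Dcal\hookrightarrow\Tcal$ --- which $\Dcal$ already possesses from the decomposition $(\Dcal,\Ccal)$ --- not a left one; to extract a left adjoint from the product-preserving inclusion one would need Brown representability for the dual to hold in $\Dcal$, and set-generation (equivalently, well-generation) alone does not guarantee that. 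The paper does not argue this step either but outsources it to \cite[Proposition 6.3]{LV20}; you should either invoke that reference or supply an independent argument for $(iii)\Rightarrow(ii)$.
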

\begin{proof}
  Follows from \cite[Theorem 3.5]{BF11}, \cite[Proposition 6.3]{LV20} together with \cref{def-ideal}.
\end{proof}
\begin{rmk}
  The right $\otimes$-idempotents correspond up to natural equivalence to \newterm{smashing localizations} of $\Tcal$, that is, to Bousfield localization functors $L: \Tcal \to \Tcal$ which are of the form $X \otimes -$ for some object $X \in \Tcal$, see \cite[\S 2,3]{BF11}. The smashing $\otimes$-ideal corresponding to a smashing localization $L$ is just its kernel, while the corresponding definable $\otimes$-ideal is its essential image. One immediate advantage of working with essential images instead of kernels is that definable $\otimes$-ideals are, unlike smashing $\otimes$-ideals, and under the mild assumption of $\Tcal$ having a model, determined by closure properties, see \cref{def-ideal}.
\end{rmk}
\subsection{Compact generation} It is clear that for any thick $\otimes$-ideal $\Scal \subseteq \Tcal^\cpt$, the category $\Dcal = \Scal\Perp{}$ is a definable $\otimes$-ideal in $\Tcal$, the definability is witnessed by the set $\Phi = \{s \xrightarrow{1_s} s \mid s \in \Scal\}$ of identity maps. If $\Dcal$ is of this form, we call it a \newterm{compactly generated} definable $\otimes$-ideal. In view of \cref{ttftriples}, we also call the associated smashing $\otimes$-ideal and the associated TTF $\otimes$-triple compactly generated. Given a Thomason set $V$, we let $\Lcal_V = \Loc{\Kcal_V}$ denote the localizing $\otimes$-ideal in $\Tcal$ generated by $\Kcal_V$. The standard observation dictates that $\Lcal_V \cap \Tcal^\cpt = \Kcal_V$, which immediately yields the following.
\begin{prop}\label{Balm-cg}
  The assignment $V \mapsto (\Lcal_V, \Tcal_V, \Ccal_V)$ induces a bijection between Thomason sets $V$ in $\Spec{\Tcal^\cpt}$ and the set of compactly generated $\otimes$-TTF triples in $\Tcal$.
\end{prop}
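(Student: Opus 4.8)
The plan is to reduce everything to \cref{ttftriples}, Balmer's classification of (radical) thick $\otimes$-ideals of $\Tcal^\cpt$ by Thomason subsets, and the standard (Neeman--Thomason) localization theorem; the only substantive point is to identify the smashing $\otimes$-ideal that \cref{ttftriples} attaches to $\Tcal_V$ with the concrete object $\Lcal_V=\Loc{\Kcal_V}$.

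First I would check the assignment is well-defined. For a Thomason set $V$ the class $\Kcal_V\subseteq\Tcal^\cpt$ is a thick $\otimes$-ideal, so $\Lcal_V=\Loc{\Kcal_V}$ makes sense; a two-step generation argument — using that $\Tcal^\cpt$ generates $\Tcal$ and that each $y\otimes-$ preserves coproducts (as $-\otimes-$ is closed) — shows that $\Loc{\Kcal_V}$ coincides with the plain localizing subcategory generated by $\Kcal_V$, needing no extra $\otimes$-ideal closure. Being generated by a set of compacts, $\Loc{\Kcal_V}$ is a smashing $\otimes$-ideal, and since right orthogonality to a set of objects agrees with right orthogonality to the localizing subcategory it generates, $(\Loc{\Kcal_V})\Perp{}=\Kcal_V\Perp{}=:\Tcal_V$, with $(\Loc{\Kcal_V},\Tcal_V)$ a semiorthogonal $\otimes$-decomposition and $\Loc{\Kcal_V}=\Perp{}\Tcal_V$. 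Thus $\Tcal_V$ is exactly the compactly generated definable $\otimes$-ideal attached to $\Kcal_V$, and by \cref{ttftriples} it underlies a unique TTF $\otimes$-triple, which I name $(\Lcal_V,\Tcal_V,\Ccal_V)$; it is compactly generated by construction.

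For surjectivity, a compactly generated TTF $\otimes$-triple $(\Lcal,\Dcal,\Ccal)$ has $\Dcal=\Scal\Perp{}$ for some thick $\otimes$-ideal $\Scal\subseteq\Tcal^\cpt$; as every compact object is rigid, $\Scal$ is radical, so Balmer's theorem supplies a unique Thomason set $V$ with $\Scal=\Kcal_V$, hence $\Dcal=\Tcal_V$ and, by the uniqueness in \cref{ttftriples}, $(\Lcal,\Dcal,\Ccal)=(\Lcal_V,\Tcal_V,\Ccal_V)$. For injectivity, suppose $(\Lcal_V,\Tcal_V,\Ccal_V)=(\Lcal_W,\Tcal_W,\Ccal_W)$; then $\Lcal_V=\Lcal_W$, and by the standard observation $\Lcal_V\cap\Tcal^\cpt=\Kcal_V$ (and likewise for $W$), so $\Kcal_V=\Kcal_W$, and Balmer's classification forces $V=W$.

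I expect the only delicate step to be the identification inside the second paragraph — that $\Loc{\Kcal_V}$ is smashing, that $(\Loc{\Kcal_V})\Perp{}=\Kcal_V\Perp{}$, and that $\Loc{\Kcal_V}$ is already a $\otimes$-ideal — which rests on classical facts about localizing subcategories generated by a set of compacts, together with the Neeman--Thomason observation that such a subcategory meets $\Tcal^\cpt$ in exactly the thick subcategory it generates there. Everything else is formal manipulation of semiorthogonal decompositions, so I anticipate no real obstacle once those inputs are cited correctly.
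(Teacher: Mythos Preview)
Your proposal is correct and follows essentially the same route the paper has in mind: the paper does not spell out a proof but simply remarks that the standard observation $\Lcal_V\cap\Tcal^\cpt=\Kcal_V$ ``immediately yields'' the proposition, and your argument is precisely the unpacking of this, combining Balmer's classification of radical thick $\otimes$-ideals with \cref{ttftriples} and the Neeman--Thomason identification of compacts in a compactly generated localizing subcategory. There is nothing to add.
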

In addition, we denote by $1 \to \lambda_V := \lambda_{\Tcal_V}$ the associated right $\otimes$-tensor idempotent. The definable $\otimes$-ideal $\Tcal_V$ is a big tt-category itself, see \cite[Theorem 4.1]{BF11}. The tensor structure on $\Tcal_V$ is given by restriction of $\otimes$ (and of $[-,-]$), the tensor unit is $\lambda_V$, and the compact objects $\Tcal_V^\cpt$ are equivalent to $\thick{\Kcal/\Kcal_V}$. The spectrum $\Spec{\Tcal_V^\cpt}$ is then homeomorphic to the complement subspace $V^c$ of $V$ in $\Spec{\Tcal^\cpt}$, \cite[Proposition 3.11]{Bal05}. 

\subsection{Telescope Conjecture} We say that $\Tcal$ satisfies the \newterm{Telescope Conjecture $\TC$} if every definable $\otimes$-ideal of $\Tcal$ is compactly generated. In other words, $\TC$ is the claim that \cref{Balm-cg} describes \newterm{all} of the TTF $\otimes$-triples in $\Tcal$. Telescope Conjecture was originally formulated in algebraic topology by Ravenel \cite{Rav84} for the stable homotopy category $\SH$ of spectra, there it indeed remained a conjecture until the very answer in the negative by Burklund, Hahn, Levy, and Schlank \cite{BHLS23}. The past and new (counter-)examples coming from algebraic geometry will be the topic of the last three sections of this paper.

\subsection{Right $\otimes$-idempotents as homotopy colimits}
We will employ the following auxiliary result in Section 4 in the special case $\Tcal = \D(R)$. Let $V_1,\ldots,V_n$ be a collection of Thomason subsets of $\Spec{\Tcal^\cpt}$. Then the right $\otimes$-idempotent $1 \to \lambda_V$ corresponding to the union $V = V_1 \cup \ldots \cup V_n$ can by \cite[Proposition 3.11]{BF11} be represented by the tensor product $\bigotimes_{i=1}^n(1 \to \lambda_{V_i}) = 1 \to \bigotimes_{i=1}^n \lambda_{V_i}$. The following provides an infinite union version. 

\begin{theorem}\label{hocolim}
  Let $\Tcal$ be a big tt-category with a model. Let $V=\bigcup_{i\in I}V_i$ be a Thomason subset of $\Spec{\Tcal^\cpt}$ written as a union of Thomason subsets $V_i$. Then the right $\otimes$-idempotent $1 \to \lambda_V$ is a directed homotopy colimit of right $\otimes$-tensor idempotents of the form $1 \to \bigotimes_{i\in F}\lambda_{V_i}$, where $F \subseteq I$ is any finite subset.

  In particular, $1 \to \lambda_V$ is a directed homotopy colimit of right $\otimes$-tensor idempotents of the form $1 \to \lambda_W$, where $W \subseteq V$ is any subset with open quasi-compact complement in $\Spec{\Tcal^\cpt}$.
\end{theorem}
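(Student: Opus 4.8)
The plan is to reduce the infinite union to the finite case already recorded above, using \cref{hocolim}'s hypothesis that $\Tcal$ has a model so that directed homotopy colimits behave well with respect to the purity theory of \cref{def-ideal}. First I would set up the indexing: let $\mathcal{F}$ be the poset of finite subsets $F \subseteq I$ ordered by inclusion, which is directed, and for each $F \in \mathcal{F}$ put $V_F = \bigcup_{i \in F} V_i$, a Thomason subset with $V_F \subseteq V_{F'}$ whenever $F \subseteq F'$, and $\bigcup_{F} V_F = V$. By the finite-union case recalled just before the statement (\cite[Proposition 3.11]{BF11}), the right $\otimes$-idempotent $1 \to \lambda_{V_F}$ is represented by $1 \to \bigotimes_{i \in F}\lambda_{V_i}$, and the inclusion $V_F \subseteq V_{F'}$ induces the unique morphism of right $\otimes$-idempotents $\lambda_{V_F} \to \lambda_{V_{F'}}$ (uniqueness by \cite[Corollary 3.7]{BF11}), so the $\lambda_{V_F}$ assemble into a directed system in $\Tcal$ indexed by $\mathcal{F}$.

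The core step is to identify $\hocolim_{F \in \mathcal{F}} \lambda_{V_F}$ with $\lambda_V$ as right $\otimes$-idempotents. Tensoring is smashing, hence preserves homotopy colimits, so $\bigl(\hocolim_F \lambda_{V_F}\bigr) \otimes X \simeq \hocolim_F (\lambda_{V_F} \otimes X)$ for all $X$; in particular one checks the idempotence condition $\mu \otimes (1 \to \mu)$ is an isomorphism for $\mu := \hocolim_F \lambda_{V_F}$ by passing this test through the homotopy colimit and using idempotence of each $\lambda_{V_F}$ together with compatibility of the structure maps. To pin down \emph{which} idempotent $\mu$ is, I would compute the associated definable $\otimes$-ideal (equivalently, via \cref{ttftriples}, the smashing $\otimes$-ideal, i.e. the kernel of the Bousfield localization $\mu \otimes -$): an object $X$ is acyclic, i.e. $\mu \otimes X = 0$, iff $\hocolim_F(\lambda_{V_F} \otimes X) = 0$. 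For compact $X$, since $\Tcal^\cpt$ is skeletally small, $\Hom_\Tcal(X, \hocolim_F \lambda_{V_F} \otimes X) = \colim_F \Hom_\Tcal(X, \lambda_{V_F}\otimes X)$, and here the telescope exactness/Milnor sequence collapses; this lets one show the kernel on compacts is $\bigcup_F \Kcal_{V_F} \cap \Tcal^\cpt = \bigcup_F \Kcal_{V_F} = \Kcal_V$ (the last equality because a compact object has support in $V = \bigcup V_i$ iff it has support in some $V_F$, using quasi-compactness of $\supp$). Since the homotopy colimit of a directed system of objects lying in a localizing $\otimes$-ideal again lies in it, and $\mu \otimes - $ annihilates $\Kcal_V$, the smashing $\otimes$-ideal attached to $\mu$ contains $\Lcal_V = \Loc{\Kcal_V}$; combined with the kernel-on-compacts computation and \cref{Balm-cg} (which says the compactly generated TTF triple over $V$ is exactly $(\Lcal_V, \Tcal_V, \Ccal_V)$), I would conclude $\mu$ and $\lambda_V$ have the same smashing $\otimes$-ideal, hence are equivalent by \cref{ttftriples}. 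For the ``in particular'' clause, observe that any Thomason $V$ is the union of the subsets $W$ with open quasi-compact complement that it contains (these are precisely the finite unions of basic closed sets with quasi-compact complement inside $V$), and each finite tensor $\bigotimes_{i\in F}\lambda_{V_i}$ is of the form $\lambda_{V_F}$ with $V_F$ having open quasi-compact complement when each $V_i$ does; a cofinality argument between the two directed systems then transfers the first statement to the second.

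The main obstacle is the passage from ``$\mu \otimes -$ kills exactly $\Kcal_V$ on compacts'' to ``$\mu$ is the idempotent $\lambda_V$'' — i.e. controlling the smashing $\otimes$-ideal associated to the homotopy colimit. The subtlety is that a directed homotopy colimit of the acyclics $\Tcal_{V_F}^{ac}$ need not obviously be all of $\Lcal_V$, and one must be careful that no extra acyclics are created in the colimit; here is where having a model is essential, since it guarantees (as in the proof of \cref{def-ideal}) that the relevant localizing/definable subcategories are closed under directed homotopy colimits, and one also invokes that $\lambda_V \otimes -$ being smashing means $\lambda_V \otimes \lambda_{V_F} \simeq \lambda_{\max(V,V_F)} = \lambda_V$, so each structure map becomes an isomorphism after applying $\lambda_V \otimes -$, forcing $\lambda_V \otimes \mu \simeq \lambda_V$; symmetrically $\mu \otimes \lambda_V \simeq \mu$ once one knows $\mu$ inverts the same maps, and the two together give the mutual comparison map that is an isomorphism. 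Assembling these compatibilities carefully — rather than the bookkeeping with finite subsets $F$, which is routine — is the real content.
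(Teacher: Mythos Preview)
Your overall strategy---index by the poset $\mathcal{F}$ of finite subsets, form the system of right $\otimes$-idempotents, take a homotopy colimit, and identify it with $\lambda_V$---matches the paper's. But there is a genuine gap and one wobbly step.

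\textbf{The lifting to the model.} You write that the $\lambda_{V_F}$ ``assemble into a directed system in $\Tcal$'' via the unique morphisms of right $\otimes$-idempotents, and then proceed to take $\hocolim_F \lambda_{V_F}$. Uniqueness of these morphisms in the \emph{homotopy} category does not give you a coherent diagram in the model; without a lift, the homotopy colimit is not even well-defined for a non-sequential indexing poset like $\mathcal{F}$. This is exactly where the hypothesis ``$\Tcal$ has a model'' does real work, and it is not for the closure properties you cite from \cref{def-ideal}. The paper spends a paragraph on this: it fixes cofibrant lifts $f_i: 1^c \to \lambda_{V_i}^c$ of the individual idempotents \emph{in the model} and then takes the literal tensor products $\bigotimes_{i \in F} f_i$ there. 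Because the monoidal structure in the model is strictly symmetric, these tensor products fit into a strictly commuting $\mathcal{F}$-diagram, which one may then cofibrantly replace and take the ordinary colimit of. You skip this entirely.

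\textbf{The identification step.} Your argument that the resulting $\mu$ equals $\lambda_V$ drifts into circularity: you invoke ``$\mu \otimes \lambda_V \simeq \mu$ once one knows $\mu$ inverts the same maps,'' but that is what you are trying to prove, and knowing $\Lcal_\mu \cap \Tcal^\cpt = \Kcal_V$ does not by itself pin down the smashing ideal via \cref{Balm-cg} (which only classifies the \emph{compactly generated} TTF triples). The paper avoids this by arguing on both sides of the approximation triangle directly: the fibre of $1 \to \mu$ is a homotopy colimit of objects in $\Lcal_{V_F} \subseteq \Lcal_V$, hence lies in $\Lcal_V$; and $\mu$ itself lies in $\bigcap_{i} \Tcal_{V_i} = \Tcal_V$ by the cofinality trick of restricting the diagram to $\{F \in \mathcal{F} \mid i \in F\}$, where every vertex already sits in $\Tcal_{V_i}$. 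Then \cite[Theorem 3.5(a)]{BF11} forces $1 \to \mu$ to be the idempotent $1 \to \lambda_V$. This is both shorter and avoids the circularity; your attempt to go via the kernel-on-compacts can be completed (one shows $\mu \in \Tcal_V$ by the same $\Hom$-from-compacts computation, then $\Dcal_\mu \subseteq \Tcal_V$ and $\lambda_V \in \Dcal_\mu$ give equality), but as written it is not there yet.
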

\begin{proof}
  Let $P$ be the poset of all finite subsets of $I$. For each $p \in P$, let $e(p) = \bigotimes_{i \in p} \lambda_{V_i}$, with the convention $e(\emptyset) = 1$. For each inclusion $p \subseteq p'$ we have the unique morphism $(1 \to e(p' \setminus p)) \otimes e(p)$ of right $\otimes$-tensor idempotents $e(p) \to e(p')$, obtaining a directed diagram $\mathscr{D}_P$ of shape $P$ in $\Tcal$. Suppose for now that this diagram in the homotopy category $\Tcal$ lifts to a diagram $\mathscr{E}_P$ in the model. Then we may compute its homotopy colimit $\hocolim \mathscr{E}_P$ in $\Tcal$ by computing the ordinary colimit $\colim \mathscr{C}_P$ in the model, where $\mathscr{C}_P$ is the cofibrant replacement of $\mathscr{E}_P$ with respect to a suitable model structure, see \cite[\S 5.1]{Ho99}. Note that for each $p \subseteq p'$, the corresponding map in $\mathscr{C}_P$ is still isomorphic in $\Tcal$ to $e(p) \to e(p')$, as the weak equivalences in the diagram category are precisely the point-wise weak equivalences. In $\Tcal$, we have the induced map $1 \to \hocolim \mathscr{E}_P$. Since homotopy coherent cones commute with homotopy colimits, this map extends to a triangle $\hocolim \mathscr{F}_P \to 1 \to \hocolim \mathscr{E}_P \to \Sigma \hocolim \mathscr{F}_P$, where $\mathscr{F}_P$ is a diagram whose $p$-th vertex $f(p)$ fits in $\Tcal$ into a triangle $f(p) \to 1 \to e(p) \to \Sigma f(p)$. Since $f(p) \in \Lcal_{\bigcup_{i \in p}V_i} \subseteq \Lcal_V$, we have $\hocolim \mathscr{F}_P \in \Lcal_V$, as localizing $\otimes$-ideals are closed under homotopy colimits. On the other hand, $\colim \mathscr{C}_P$ is isomorphic to $\colim \mathscr{C}_P^i$, where $\mathscr{C}_{P(i)}$ is the same diagram restricted to $P(i) = \{p \in P \mid i \in p\}$. It follows from the definition \cite[\S 5.1]{Ho99} that $\mathscr{C}_{P(i)}$ is cofibrant, and thus $\hocolim \mathscr{F}_P \cong \hocolim \mathscr{F}_{P(i)}$ for any $i \in I$. Therefore, $\hocolim \mathscr{F}_P \in \Dcal = \bigcap_{i \in I}\Dcal_{V_i}$. Clearly, $\Dcal$ is a definable $\otimes$-ideal. In addition, $\Dcal = (\bigcup_{i \in I}\Kcal_{V_i})\Perp{}$, and so $\Dcal$ is compactly generated. By \cref{Balm-cg}, there is a Thomason set $W$ such that $\Dcal = \Dcal_W = \Kcal_W\Perp{}$. Since $V = \bigcup_{i \in I}V_i$, necessarily $W = V$. Finally, by \cite[Theorem 3.5(a)]{BF11} it follows that the map $1 \to \hocolim \mathscr{E}_P$ identifies with the right $\otimes$-idempotent $1 \to \lambda_V$.
  
  \noindent
  It remains to lift $\mathscr{D}_P$ to the model. Fix cofibrant replacements $f_i: 1^c \to \lambda_{V_i}^c$ between cofibrant objects in the model lifting the right $\otimes$-idempotents $1 \to \lambda_{V_i}$ in $\Tcal$. To each inclusion $p \subseteq p'$ we assign the map $(\bigotimes_{i \in (p' \setminus p)}f_i) \otimes \bigotimes_{i \in p}\lambda_{V_i}^c$. Since we are working in a symmetric monoidal category, this defines a direct system $\mathscr{E}_P$ of shape $P$ in the model, which can easily be checked to project onto $\mathscr{D}_P$ in the homotopy category $\Tcal$. The last claim now follows easily as $V$ is the directed union of all of its subsets with open quasi-compact complements.
\end{proof}
\begin{rmk}
  There is a closely related result by Stevenson available in \cite[Lemma 6.6]{St13}, which is in certain sense generalized by \cref{hocolim}. Stevenson's result assumes $\Spec{\Tcal^\cpt}$ to be a noetherian space and addresses a union of a \textit{chain} of Thomason subsets (equivalently, specialization closed subsets in the noetherian situation). In \cref{hocolim}, there is no assumption on $\Spec{\Tcal^\cpt}$ and we do not assume that $V$ is a union of a chain of the subsets, a directed shape that submits more easily to lifting to the model. On the other hand, our construction always produces a coherent directed diagram of shape $P$, the lattice of finite subsets of $I$, ignoring any potential directed set structure on $I$.
\end{rmk}
\section{Telescope conjecture via homological residue fields}\label{S:two}
The goal of this section is to provide a characterization of when $\TC$ is satisfied in $\Tcal$ in terms of definable $\otimes$-ideals generated by the homological residue field objects, as introduced recently by Balmer, Krause, and Stevenson \cite{BKS19} and Balmer \cite{Bal20b}.
\subsection{Homological residue fields} Here we follow \cite{Bal20b}. A subcategory $\Scal$ of $\fp \Acal$ is called a \newterm{Serre $\otimes$-ideal} if it is closed under extensions, subobjects, quotients, and tensoring by any object of $\Acal$. The \newterm{homological spectrum} $\Spech{\Tcal^\cpt}$ of $\Tcal$ is a topological space whose points are maximal proper Serre $\otimes$-ideals of $\fp \Acal$. To each $\Bcal \in \Spech{\Tcal^\cpt}$, we assign the localizing $\otimes$-ideal $\Loc{\Bcal}$ in $\Acal$ generated by $\Bcal$ (equivalently, this is the direct limit closure $\varinjlim \Bcal$). Let $\Acal_\Bcal = \Acal/\Loc{\Bcal}$ be the Serre localization of $\Acal$ at $\Loc{\Bcal}$, and let $\yo_\Bcal: \Tcal \to \Acal_\Bcal$ be the cohomological functor obtained by composing $\yo$ with the localization functor $q_\Bcal: \Acal \to \Acal_\Bcal$. Consider the injective envelope $\yo_\Bcal 1 \to \overline{E_\Bcal}$ in the Grothendieck category $\Acal_\Bcal$. Let $r_\Bcal: \Acal_\Bcal \to \Acal$ denote the right adjoint to $q_\Bcal$, then $r_\Bcal$ is fully faithful, left exact, and preserves injectives. The object $\widehat{E_\Bcal} = r_\Bcal(\overline{E_\Bcal})$ is injective in $\Acal$, and therefore there is an (up to isomorphism unique, pure-injective) object $E_\Bcal \in \Tcal$ such that $\yo E_\Bcal \cong \widehat{E_\Bcal}$, this is the \newterm{homological residue field object} at $\Bcal$. The \newterm{homological support} of an object $X \in \Tcal$ is defined as $\supph(X) = \{\Bcal \in \Spech{\Tcal^\cpt} \mid [X,E_\Bcal] \neq 0\}$. The topology on $\Spech{\Tcal^\cpt}$ has a basis of closed sets of the form $\{\Bcal \in \Spech{\Tcal^\cpt} \mid [x,E_\Bcal] \neq 0 \}$ for all $x \in \Tcal^\cpt$. Importantly, if $X$ is a weak ring object in $\Tcal$, for example if $X = \lambda_\Dcal$ is a right $\otimes$-idempotent, then by \cite[Theorem 1.8]{Bal20b} we have the more familiar formula $\supph(X) = \{\Bcal \in \Spech{\Tcal^\cpt} \mid \yo_\Bcal(X) \neq 0\} = \{\Bcal \in \Spech{\Tcal^\cpt} \mid E_\Bcal \otimes X \neq 0\}$ (the latter equality holds for any object $X$ by \cite[Proposition 2.14(b)]{Bal20b}).

\subsection{Giraud subcategory} The essential image of $r_\Bcal$ in $\Acal$ is the associated \newterm{Giraud subcategory} $\Gcal_\Bcal$ of $\Acal$. It consists precisely of those objects $M \in \Acal$ such that the unit morphism $M \to r_\Bcal q_\Bcal(M)$ is an isomorphism, or equivalently, such that $\Hom_\Acal(B,M) = 0 = \Ext_\Acal^1(B,M)$ for any $B \in \Loc{\Bcal}$, see \cite[Lemma 2.2]{Kra97}. It follows from \cite[Lemma 2.4, Theorem 2.8]{Kra97} that $r_\Bcal$ preserves direct limits and thus $\Gcal_\Bcal$ is closed under direct limits. In general, $\Gcal_\Bcal$ has no reason to be closed under the tensor product in $\Acal$, but the following special case will prove useful in a moment.
\begin{lemma}\label{Giraud}
  The Giraud subcategory $\Gcal_\Bcal$ is closed under $\yo X \otimes -$ for any $X \in \Tcal$.
\end{lemma}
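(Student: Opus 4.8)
The plan is to reduce to the case that $X$ is compact, where the rigidity of $X$ can be exploited, and to leave the general case to a routine directed-colimit argument.

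\emph{Reduction to the compact case.} First I would invoke the classical fact from Krause's purity theory \cite{Kr00} that $\yo X$ is a flat object of $\Acal$, i.e.\ a directed colimit $\yo X\cong\varinjlim_{i\in I}\yo c_i$ of representable functors with $c_i\in\Tcal^\cpt$ (concretely: any morphism to $\yo X$ from a finitely presentable functor, which necessarily has the form $\Coker(\yo x_0\to\yo x_1)$ for some $x_0,x_1\in\Tcal^\cpt$, is induced by a morphism $x_1\to X$ annihilating $x_0\to x_1$, hence factoring through $\Cone(x_0\to x_1)\in\Tcal^\cpt$). Since $-\otimes-$ on the Grothendieck category $\Acal$ preserves directed colimits in each variable, and $\Gcal_\Bcal$ is closed under directed colimits (as recalled just above), the equality $\yo X\otimes M=\varinjlim_{i\in I}(\yo c_i\otimes M)$ reduces the statement to showing $\yo c\otimes M\in\Gcal_\Bcal$ whenever $c\in\Tcal^\cpt$ and $M\in\Gcal_\Bcal$.

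\emph{The compact case.} Since $\yo$ is symmetric monoidal with $\yo c\otimes\yo(c^*)=\yo(c\otimes c^*)$, applying $\yo$ to the evaluation and coevaluation morphisms exhibiting $c^*=[c,1]$ as the dual of $c$ in $\Tcal$ exhibits $\yo(c^*)$ as a dual of $\yo c$ in $\Acal$. In particular there is an adjunction $\yo(c^*)\otimes-\dashv\yo c\otimes-$, and tensoring with a rigid object is exact; so $\yo c\otimes-$ is exact, and being a right adjoint of the exact functor $\yo(c^*)\otimes-$ it also preserves injective objects. Now fix $M\in\Gcal_\Bcal$ together with an injective resolution $M\to I^\bullet$ in $\Acal$; then $\yo c\otimes I^\bullet$ is an injective resolution of $\yo c\otimes M$, so for every $B\in\Loc{\Bcal}$ the adjunction yields
\[
 \Ext^i_\Acal(B,\yo c\otimes M)=H^i\Hom_\Acal(B,\yo c\otimes I^\bullet)\cong H^i\Hom_\Acal(\yo(c^*)\otimes B,I^\bullet)=\Ext^i_\Acal(\yo(c^*)\otimes B,M).
\]
As $\Loc{\Bcal}$ is a $\otimes$-ideal of $\Acal$ we have $\yo(c^*)\otimes B\in\Loc{\Bcal}$, so the right-hand side vanishes for $i=0,1$ because $M\in\Gcal_\Bcal$. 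By the characterization of $\Gcal_\Bcal$ recalled before the lemma (vanishing of $\Hom$ and $\Ext^1$ against $\Loc{\Bcal}$), this gives $\yo c\otimes M\in\Gcal_\Bcal$, completing the proof.

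\emph{Main obstacle.} There is no serious difficulty here: the only non-formal ingredient is the flatness of $\yo X$, which is standard. Once reduced to compact $X$, the argument hinges on a single observation, namely that tensoring by the rigid object $\yo c$ is an exact, injective-preserving functor whose adjoint twin $\yo(c^*)\otimes-$ preserves the localizing $\otimes$-ideal $\Loc{\Bcal}$; the mild care needed is only in checking that $\yo c\otimes-$ is simultaneously exact and injective-preserving, which is exactly what legitimizes the displayed identification of $\mathsf{Ext}$-groups and is the one place where the rigidity of the compact object $c$ is used.
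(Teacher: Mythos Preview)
Your proof is correct and follows essentially the same approach as the paper: both reduce to the compact case via flatness of $\yo X$ and closure of $\Gcal_\Bcal$ under direct limits, then exploit rigidity of $\yo c$ to transfer the $\Hom$/$\Ext^1$ vanishing against $\Loc{\Bcal}$ through the tensor--hom adjunction. The only cosmetic difference is that the paper phrases the compact step via the internal hom, writing $\yo x \otimes - \cong [(\yo x)^*,-]_\Acal$ and separately verifying $(\yo x)^* \cong \yo(x^*)$, whereas you transport the duality data along $\yo$ directly and spell out the injective-resolution argument for $\Ext^1$ more explicitly; the content is the same.
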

\begin{proof}
  Since the functor $\yo X$ is flat, it is isomorphic in $\Acal$ to a direct limit of representable functors, see \cite{OR70}. Therefore, it suffices to prove the claim for $x \in \Tcal^\cpt$ in place of general $X \in \Tcal$ by the preceding discussion. By \cite[Proposition 2.9(b)]{Bal20b}, $\yo x$ is rigid with respect to the closed monoidal structure on $\Acal$, so that there is a natural isomorphism $\yo x \otimes - \cong [(\yo x)^*,-]_\Acal$, where $(\yo x)^* = [\yo x,\yo 1]_\Acal$. Then for any $B \in \Loc{\Bcal}$ and $M \in \Gcal_\Bcal$ we have 
  $$\Hom_\Acal(B,\yo x \otimes M) \cong \Hom_\Acal(B,[(\yo x)^*,M]_\Acal) \cong \Hom_\Acal(B \otimes (\yo x)^*,M) = 0$$ 
  using the adjunction and that $\Loc{\Bcal}$ is closed under tensoring. The same argument applies when $\Hom_\Acal$ is replaced by $\Ext_\Acal^1$, which proves $\yo x \otimes M \in \Gcal_\Bcal$. This works because $\yo x \otimes -$ is an exact functor by \cite[Proposition 2.9(b)]{Bal20b}, and the same applies to $(\yo x)^* \otimes - $ once we check that $(\yo x)^* \cong \yo(x^*)$. The latter natural isomorphism is obtained similarly to \cite[Recollection 2.4]{Bal20b}: Indeed, for any $c \in \Tcal^\cpt$ and $Y \in \Tcal$ we have $\Hom_\Acal(\yo c, [\yo x, \yo Y]_\Acal) \cong \Hom_\Acal(\yo(c \otimes x),\yo Y) \cong \Hom_\Tcal(c \otimes x, Y) \cong \Hom_\Tcal(c,[x,Y]) \cong \Hom_\Acal(\yo c, \yo [x,Y])$, establishing the natural isomorphism $\yo[x,Y] \cong [\yo x,\yo Y]_\Acal$ for any $x \in \Tcal^\cpt$ and $Y \in \Tcal$.
\end{proof}
\begin{cor}\label{tensor-formula-Giraud}
  Let $X,Y \in \Tcal$ be such that $\yo Y \in \Gcal_\Bcal$. Then there is a natural isomorphism $\yo (Y \otimes X) \cong \yo Y \otimes r_\Bcal \yo_\Bcal X$. 
  
  In particular, we have $\yo (E_\Bcal \otimes X) \cong \widehat{E_\Bcal} \otimes r_\Bcal \yo_\Bcal X$ for any $X \in \Tcal$.
\end{cor}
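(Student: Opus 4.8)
The plan is to exploit the adjunction $(q_\Bcal, r_\Bcal)$ between $\Acal$ and $\Acal_\Bcal$ together with the fact, established in \cref{Giraud}, that $\yo Y \otimes -$ preserves the Giraud subcategory $\Gcal_\Bcal$. Since $\yo Y \in \Gcal_\Bcal$, we have $\yo Y \cong r_\Bcal q_\Bcal(\yo Y) = r_\Bcal \yo_\Bcal Y$, so the target $\yo Y \otimes r_\Bcal \yo_\Bcal X$ already lives in $\Gcal_\Bcal$ by \cref{Giraud}. The source $\yo(Y \otimes X) = \yo Y \otimes \yo X$ also lies in $\Gcal_\Bcal$ for the same reason. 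Because $r_\Bcal$ is fully faithful with essential image $\Gcal_\Bcal$, it suffices to produce a natural isomorphism after applying $q_\Bcal$, i.e. to check that $\yo_\Bcal(Y \otimes X) \cong q_\Bcal(\yo Y \otimes r_\Bcal \yo_\Bcal X)$ naturally. Equivalently — and this is cleaner — I would build the comparison map directly in $\Acal$ and then verify it is an isomorphism by applying $q_\Bcal$.

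Concretely, first I would construct a natural transformation $\yo Y \otimes \yo X \to \yo Y \otimes r_\Bcal \yo_\Bcal X$ by tensoring $\yo Y$ with the unit $\eta_X \colon \yo X \to r_\Bcal q_\Bcal(\yo X) = r_\Bcal \yo_\Bcal X$ of the adjunction. Both source and target lie in $\Gcal_\Bcal$, so it remains to show this map becomes an isomorphism in $\Acal_\Bcal$ after applying $q_\Bcal$. Since $q_\Bcal$ is exact and (symmetric) monoidal — it is a Serre localization, and the monoidal structure on $\Acal_\Bcal$ is the one induced via $q_\Bcal$ — we get $q_\Bcal(\yo Y \otimes \yo X) \cong q_\Bcal(\yo Y) \otimes q_\Bcal(\yo X) = \yo_\Bcal Y \otimes \yo_\Bcal X$, and likewise $q_\Bcal(\yo Y \otimes r_\Bcal \yo_\Bcal X) \cong \yo_\Bcal Y \otimes q_\Bcal r_\Bcal \yo_\Bcal X \cong \yo_\Bcal Y \otimes \yo_\Bcal X$, the last step using the counit isomorphism $q_\Bcal r_\Bcal \cong \mathrm{id}$ (valid because $r_\Bcal$ is fully faithful). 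Under these identifications the map $q_\Bcal(\yo Y \otimes \eta_X)$ corresponds to $\yo_\Bcal Y \otimes (q_\Bcal \eta_X)$, and $q_\Bcal \eta_X$ is precisely the counit-inverse, hence an isomorphism. Therefore $\yo Y \otimes \eta_X$ is an isomorphism in $\Gcal_\Bcal \subseteq \Acal$, which is the asserted natural isomorphism $\yo(Y \otimes X) \cong \yo Y \otimes r_\Bcal \yo_\Bcal X$.

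For the ``in particular'' clause, I would simply take $Y = E_\Bcal$. By construction $\yo E_\Bcal = \widehat{E_\Bcal} = r_\Bcal(\overline{E_\Bcal})$ lies in the essential image of $r_\Bcal$, which is exactly $\Gcal_\Bcal$, so the hypothesis $\yo Y \in \Gcal_\Bcal$ is met and the general statement gives $\yo(E_\Bcal \otimes X) \cong \widehat{E_\Bcal} \otimes r_\Bcal \yo_\Bcal X$ for every $X \in \Tcal$.

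The only genuinely delicate point is the compatibility of $q_\Bcal$ with the monoidal structures and, relatedly, that the map $q_\Bcal(\eta_X)$ is the inverse of the counit. This is a standard feature of the localization $\Acal \to \Acal_\Bcal$ once one knows that $\Loc{\Bcal}$ is a localizing $\otimes$-ideal of $\Acal$ (so that the quotient inherits a closed symmetric monoidal structure making $q_\Bcal$ monoidal), and that $r_\Bcal$ is fully faithful; both are already in play in the surrounding text. Everything else — exactness of $q_\Bcal$, fullness/faithfulness of $r_\Bcal$, the Giraud-closure from \cref{Giraud} — is either cited or immediate, so no lengthy computation is needed.
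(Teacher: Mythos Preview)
Your argument is correct and shares the paper's overall plan: build the comparison map by tensoring $\yo Y$ with the unit $\eta_X: \yo X \to r_\Bcal q_\Bcal(\yo X)$, use \cref{Giraud} to place both source and target in $\Gcal_\Bcal$, and then verify the map is an isomorphism. The ``in particular'' clause is handled identically.

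The difference is in the verification step. The paper works in $\Acal$ throughout: it uses $\otimes$-flatness of $\yo Y$ to see that the kernel $K$ and cokernel $C$ of $\yo Y \otimes \eta_X$ lie in $\Loc{\Bcal}$, and then exploits the Giraud characterization (together with a short long-exact-sequence chase for $C$) to show that $K$ and $C$ also receive no nonzero maps from $\Loc{\Bcal}$, forcing $K=C=0$. You instead pass to $\Acal_\Bcal$: since $q_\Bcal$ is strong monoidal and $q_\Bcal\eta_X$ is invertible, $q_\Bcal(\yo Y \otimes \eta_X)$ is an isomorphism, and because $q_\Bcal$ restricted to $\Gcal_\Bcal$ is an equivalence this pulls back to an isomorphism in $\Acal$. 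Your route is a bit slicker---the flatness of $\yo Y$ is used only implicitly through \cref{Giraud}, and no element-chasing is needed---but it leans on the monoidality of $q_\Bcal$, a fact which is standard (it follows from $\Loc{\Bcal}$ being a $\otimes$-ideal) yet not explicitly recorded in the paper. The paper's argument is more self-contained at the cost of invoking flatness of $\yo Y$ a second time.
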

\begin{proof}
  By the general theory, both the kernel and the cokernel of the reflection map $i: \yo X \to r_\Bcal q_\Bcal (\yo X)$ in $\Acal$ belong to $\Loc{\Bcal}$. Since $\yo Y$ is $\otimes$-flat by \cite[Proposition 2.9(b)]{Bal20b}, the same is true for the kernel $K$ and cokernel $C$ of the map $\yo Y \otimes i$. On the other hand, the map $\yo Y \otimes i: \yo Y \otimes \yo X \to \yo Y \otimes r_\Bcal q_\Bcal(\yo X)$ belongs to the Giraud subcategory $\Gcal_\Bcal$ by \cref{Giraud} because $\yo Y, r_\Bcal q_\Bcal (\yo X) \in \Gcal_\Bcal$. Since $\yo Y \otimes \yo X \in \Gcal_\Bcal$, the kernel $K$ of $\yo Y \otimes i$ satisfies $\Hom_\Acal(B,K) = 0$ for any $B \in \Loc{\Bcal}$, which together with $K \in \Loc{\Bcal}$ implies that $K$ is zero. Then $\yo Y \otimes i$ is a monomorphism between objects in $\Gcal_\Bcal$, and a long exact sequence argument shows that $\Hom_\Acal(B,C) = 0$ for any $B \in \Loc{\Bcal}$, implying as above that also the cokernel $C$ is zero. Thus, $\yo Y \otimes i$ is the desired isomorphism.

  \noindent
  The last claim follows since $\yo E_\Bcal = \widehat{E_\Bcal} = r_\Bcal (\overline{E_\Bcal}) \in \Gcal_\Bcal$.
\end{proof}
\subsection{``Nerves of Steel'' Conjecture} Following \cite{Bal20}, there is a continuous surjective map $\varphi: \Spech{\Tcal^\cpt} \to \Spec{\Tcal^\cpt}$ given by $\varphi(\Bcal) = \yo^{-1}(\Bcal)$. It is an open question of Balmer \cite[Remark 5.15]{Bal20}, sometimes referred to as the \newterm{``Nerves of Steel'' Conjecture $\NoSC$}, whether this map is also injective in general. In all the examples of big tt-categories coming from algebraic geometry, algebraic topology, and modular representation theory this has been checked to be true \cite{Bal20}. See also the treatments of $\NoSC$ in \cite{BDS23}, \cite{BW23}.

\subsection{$\TC$ via homological residue fields} 
Let us record first the following consequence of the rigidity assumption. Recall that a morphism $f$ in $\Tcal$ is \newterm{nilpotent} if there is $n>0$ such that $f^{\otimes n} = \underbrace{\text{f} \otimes \text{f} \otimes \ldots \otimes \text{f}}_{n \text{ times}}$ is zero.
\begin{lemma}\label{tensor-trick}
  Let $\Dcal$ be a definable $\otimes$-ideal of $\Tcal$ and $x \in \Tcal^\cpt$. If the reflection map $i: x \to x \otimes \lambda_\Dcal$ is nilpotent then $x \otimes \lambda_\Dcal = 0$.
\end{lemma}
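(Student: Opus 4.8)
The plan is to leverage the defining identity of the $\otimes$-idempotent $\lambda_\Dcal$ together with the rigidity of the compact object $x$. Write $\lambda := \lambda_\Dcal$ and let $e\colon 1\to\lambda$ be the right $\otimes$-idempotent, so that $i = x\otimes e$ is the reflection of $x$ into $\Dcal$. The strategy is: (1) show that tensoring a reflection map with (a power of) $\lambda$ turns it into an isomorphism, so that nilpotence of $i$ forces $x^{\otimes n}\otimes\lambda = 0$; (2) use rigidity of $x$ to descend this to $x\otimes\lambda = 0$.

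First I would record the basic mechanism. For any $Y\in\Tcal$, the map $(Y\otimes e)\otimes\lambda = Y\otimes(e\otimes\lambda)$ is an isomorphism, because $e\otimes\lambda\colon\lambda\to\lambda\otimes\lambda$ agrees, up to the symmetry constraint and the unitors, with $\lambda\otimes e = \lambda\otimes(1\to\lambda)$, which is invertible by definition of a right $\otimes$-idempotent. Hence if the reflection map $Y\otimes e\colon Y\to Y\otimes\lambda$ happens to vanish, then $Y\otimes(e\otimes\lambda)$ is simultaneously zero and an isomorphism, forcing $Y\otimes\lambda = 0$. The same reasoning with $\lambda$ replaced by $\lambda^{\otimes n}$ applies to $i^{\otimes n} = x^{\otimes n}\otimes e^{\otimes n}$: tensoring the hypothesis $i^{\otimes n}=0$ with $\lambda^{\otimes n}$ gives $x^{\otimes n}\otimes(e^{\otimes n}\otimes\lambda^{\otimes n})=0$, and $e^{\otimes n}\otimes\lambda^{\otimes n}$ is an isomorphism (it is, up to symmetry, the $n$-th tensor power of $\lambda\otimes e$), so $x^{\otimes n}\otimes\lambda^{\otimes n}=0$. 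Since $\lambda\otimes e$ is invertible we have $\lambda^{\otimes n}\cong\lambda$, whence $x^{\otimes n}\otimes\lambda = 0$.

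It remains to descend from $x^{\otimes n}$ to $x$, and here I would use that $x\in\Tcal^\cpt$ is rigid. Iterating the triangle identities for the duality datum $\eta\colon 1\to x\otimes x^{*}$ and $\epsilon\colon x^{*}\otimes x\to 1$ exhibits $x$ as a retract of $x\otimes(x^{*}\otimes x)^{\otimes(n-1)}$, which after reordering the tensor factors is $x^{\otimes n}\otimes(x^{*})^{\otimes(n-1)}$. Tensoring the retraction with $\lambda$ exhibits $x\otimes\lambda$ as a retract of $\bigl(x^{\otimes n}\otimes\lambda\bigr)\otimes(x^{*})^{\otimes(n-1)} = 0$, and therefore $x\otimes\lambda = 0$, as desired.

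I expect the substantive point to be precisely this last descent: $\otimes$-nilpotence of $i$ is a condition on $x^{\otimes n}$, not directly on $x\otimes\lambda$, so the rigidity available because $x$ is compact is genuinely needed — without compactness one would be stuck with $x^{\otimes n}\otimes\lambda = 0$ (equivalently $x^{\otimes n}$ lying in the kernel smashing $\otimes$-ideal of the TTF triple attached to $\Dcal$), which need not imply $x\otimes\lambda=0$ unless that ideal is radical. The remaining ingredients (identifying $e\otimes\lambda$ with $\lambda\otimes e$ through the symmetry, and the bookkeeping of tensor factors in the retract argument) are routine and belong to the standard Balmer--Favi calculus of $\otimes$-idempotents.
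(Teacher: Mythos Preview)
Your proof is correct and follows essentially the same route as the paper's. The only cosmetic differences: the paper identifies $i^{\otimes n}$ directly with the reflection map $x^{\otimes n}\otimes(1\to\lambda_{\Dcal})$ of $x^{\otimes n}$ (using $\lambda_{\Dcal}^{\otimes n}\cong\lambda_{\Dcal}$) rather than tensoring with $\lambda^{\otimes n}$, and for the descent from $x^{\otimes n}\in\Lcal$ to $x\in\Lcal$ it simply cites Balmer's \cite[Proposition~2.4]{Bal07}, which is exactly the retract-from-rigidity argument you spelled out inline.
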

\begin{proof}
  The map $i^{\otimes n}: x^{\otimes n} \to (x \otimes \lambda_\Dcal)^{\otimes n} = x^{\otimes n} \otimes \lambda_\Dcal^{\otimes n} = x^{\otimes n} \otimes \lambda_\Dcal$ identifies with the reflection  $x^{\otimes n} \otimes (1 \to \lambda_\Dcal)$ of $x^{\otimes n}$. Therefore, $i^{\otimes n} = 0$ if and only if $x^{\otimes n} \otimes \lambda_\Dcal = 0$ if and only if $x^{\otimes n} \in \Lcal = \Perp{}\Dcal$ if and only if $x \in \Lcal$, the last implication follows from \cite[Proposition 2.4]{Bal07}.
\end{proof}
Recall that a subset $V$ of a topological space is \newterm{specialization closed} if the closure of any point in $V$ is a subset of $V$, or equivalently, if $V$ can be written as a union of closed sets. Any Thomason subset of $\Spec{\Tcal^\cpt}$ is specialization closed but the converse is not true in general.
\begin{lemma}\label{Ep}
  Let $\Dcal$ be a definable $\otimes$-ideal of $\Tcal$. Then:
  \begin{enumerate}
    \item[(1)] For any $\Bcal \in \Spech{\Tcal^\cpt}$, $E_\Bcal \in \Dcal$ if and only if $\Bcal \in \supph(\lambda_\Dcal)$.
    \item[(2)] If $V = \varphi(\supph(\lambda_\Dcal))^c$ is specialization closed then it is Thomason and $\Dcal \subseteq \Tcal_V$.
    \item[(3)] For any $\Bcal \in \Spech{\Tcal^\cpt}$ and a Thomason subset $V$ of $\Spec{\Tcal^\cpt}$ we have $E_\Bcal \in \Tcal_V$ if and only if $\varphi(\Bcal) \not\in V$.
  \end{enumerate}
\end{lemma}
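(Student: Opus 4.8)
The plan is to handle the three items in sequence, using the structural theory of definable $\otimes$-ideals established above. For (1), the key point is that $\Dcal = \Lcal\Perp{}$ for a smashing $\otimes$-ideal $\Lcal$ by \cref{def-ideal}, and $\Dcal$ is precisely the essential image of the smashing localization $X \mapsto X \otimes \lambda_\Dcal$. Thus $E_\Bcal \in \Dcal$ if and only if the reflection map $E_\Bcal \to E_\Bcal \otimes \lambda_\Dcal$ is an isomorphism. Since $E_\Bcal$ is pure-injective, I would pass to $\Acal$ via $\yo$: applying \cref{tensor-formula-Giraud}, $\yo(E_\Bcal \otimes \lambda_\Dcal) \cong \widehat{E_\Bcal} \otimes r_\Bcal \yo_\Bcal(\lambda_\Dcal)$. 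The reflection map becomes an isomorphism exactly when $r_\Bcal\yo_\Bcal(\lambda_\Dcal)$ is ``full'' against $\widehat{E_\Bcal}$, which — given $\lambda_\Dcal$ is a weak ring object — happens precisely when $\yo_\Bcal(\lambda_\Dcal) \neq 0$, i.e. $\Bcal \in \supph(\lambda_\Dcal)$. I would be careful here to use the cited formula $\supph(\lambda_\Dcal) = \{\Bcal \mid E_\Bcal \otimes \lambda_\Dcal \neq 0\}$ for weak ring objects, and the fact that $E_\Bcal \otimes \lambda_\Dcal \neq 0$ forces the idempotent to act as identity on $E_\Bcal$ (using that $\lambda_\Dcal \otimes (1 \to \lambda_\Dcal)$ is an isomorphism and $E_\Bcal$ is in the localization subcategory generated by its tensor action).

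For (2), I would argue that $\Dcal \subseteq \Tcal_V$ where $V$ is as described, which then forces $\Tcal_V = \thick{\Kcal_W}\Perp{}$ to be Thomason: if $\Dcal$ is contained in a definable $\otimes$-ideal of the form $\Tcal_V$ for $V$ merely specialization closed, one shows $V$ must actually be Thomason because it arises as the complement of the image under $\varphi$ of a homologically-closed set, and $\varphi$ is continuous and closed on the relevant basis. More directly: by (1), the $\Bcal$ with $E_\Bcal \notin \Dcal$ are exactly those outside $\supph(\lambda_\Dcal)$; pushing forward along $\varphi$, the complement $V = \varphi(\supph(\lambda_\Dcal))^c$ collects the primes $\pp$ over which all homological residue fields lie in $\Dcal$. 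For $x \in \Tcal^\cpt$, I claim $\supp(x) \subseteq V$ iff $x \in \Dcal$; combined with the classification of radical thick $\otimes$-ideals by Thomason subsets this shows both that $V$ is Thomason (it equals $\supp$ of the thick $\otimes$-ideal $\Dcal \cap \Tcal^\cpt$) and that $\Dcal \subseteq \Tcal_V$.

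For (3), this is the cleanest case: $\Tcal_V$ is itself a definable $\otimes$-ideal (indeed a compactly generated one, with associated smashing $\otimes$-ideal $\Lcal_V$ and idempotent $\lambda_V$). Applying (1) with $\Dcal = \Tcal_V$, we get $E_\Bcal \in \Tcal_V$ iff $\Bcal \in \supph(\lambda_V)$. So it remains to identify $\supph(\lambda_V)^c$ with $\varphi^{-1}(V)$. I would compute: $\Bcal \notin \supph(\lambda_V)$ iff $E_\Bcal \otimes \lambda_V = 0$ iff (by \cref{tensor-formula-Giraud} and faithfulness of $\yo$ on pure-injectives) $\widehat{E_\Bcal} \otimes r_\Bcal\yo_\Bcal(\lambda_V) = 0$ iff $\yo_\Bcal(\lambda_V) = 0$. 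Now $\lambda_V \in \Dcal_V = \Kcal_V\Perp{}$, and the cofiber of $1 \to \lambda_V$ lies in $\Lcal_V = \Loc{\Kcal_V}$; one then checks that $\yo_\Bcal$ kills $\lambda_V$ exactly when $\yo_\Bcal$ does not kill everything in $\Kcal_V$, which by the definition $\varphi(\Bcal) = \yo^{-1}(\Bcal)$ and the fact that $\Kcal_V = \{x \in \Tcal^\cpt \mid \supp(x)\subseteq V\}$ translates precisely to $\varphi(\Bcal) \in V$. Hence $E_\Bcal \in \Tcal_V$ iff $\varphi(\Bcal) \notin V$.

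The main obstacle I anticipate is in part (2): controlling the interaction between $\varphi$ and the specialization-closed-versus-Thomason distinction. The subtlety is that $\supph(\lambda_\Dcal)$ need not be closed in $\Spech{\Tcal^\cpt}$, so its image under $\varphi$ need not be specialization closed a priori — this is exactly why the hypothesis ``$V$ is specialization closed'' is imposed. The work is to show that once $V$ \emph{is} specialization closed, the inclusion $\Dcal \subseteq \Tcal_V$ already pins down $V$ as the support of a thick $\otimes$-ideal of compacts, invoking the Balmer classification; I expect this to require identifying $\Dcal \cap \Tcal^\cpt$ with $\Kcal_V$ and using \cref{tensor-trick} (the nilpotence-to-vanishing upgrade) to rule out the pathology where a compact $x$ has $x \otimes \lambda_\Dcal \neq 0$ but every homological residue field over $\supp(x)$ lands in $\Dcal$.
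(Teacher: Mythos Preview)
Your plan for (3) is essentially the paper's argument and is fine. The real problems are in (1) and (2).

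\textbf{Part (1).} You assert that $E_\Bcal \otimes \lambda_\Dcal \neq 0$ ``forces the idempotent to act as identity on $E_\Bcal$'', but your justification (``$E_\Bcal$ is in the localization subcategory generated by its tensor action'') is circular: that $E_\Bcal$ lies in the essential image of $-\otimes\lambda_\Dcal$ is exactly the statement $E_\Bcal \in \Dcal$ you are trying to prove. The paper does \emph{not} show the reflection map is an isomorphism directly. Instead, it uses Balmer's result \cite[Lemma 3.5]{Bal20b} that for a weak ring object the map $\yo_\Bcal(1 \to \lambda_\Dcal)$ is either monic or has zero target; combined with \cref{tensor-formula-Giraud} and flatness of $\widehat{E_\Bcal}$, this makes $E_\Bcal \to E_\Bcal \otimes \lambda_\Dcal$ a \emph{pure monomorphism} in $\Tcal$. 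One then concludes $E_\Bcal \in \Dcal$ from the closure of definable $\otimes$-ideals under pure monomorphisms. You never invoke pure monomorphisms, and without that step (or the alternative route via \cite[Proposition 4.4]{BW23} mentioned in the paper) the implication $E_\Bcal \otimes \lambda_\Dcal \neq 0 \Rightarrow E_\Bcal \in \Dcal$ is unproved.

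\textbf{Part (2).} Your proposed argument has both a confusion and a missing engine. The confusion: you write ``$\supp(x) \subseteq V$ iff $x \in \Dcal$'' and later ``identifying $\Dcal \cap \Tcal^\cpt$ with $\Kcal_V$'', but $\Kcal_V$ consists of the compacts in the \emph{smashing} ideal $\Lcal = \Perp{}\Dcal$, not in $\Dcal$; your claim should be $\supp(x) \subseteq V \Rightarrow x \otimes \lambda_\Dcal = 0$. The missing engine: to get from ``$\yo_\Bcal(\lambda_\Dcal) = 0$ for all $\Bcal$ over $\supp(x)$'' to ``$x \otimes \lambda_\Dcal = 0$'' you need Balmer's Tensor Nilpotence Theorem \cite[Corollary 4.7]{Bal20}, which gives that the reflection $x \to x \otimes \lambda_\Dcal$ is $\otimes$-nilpotent; only then does \cref{tensor-trick} upgrade nilpotence to vanishing. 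You cite \cref{tensor-trick} but never the source of nilpotence, and there is no way to bypass it: the whole content of (2) is that the homological residue fields detect the vanishing of $x \otimes \lambda_\Dcal$ for compact $x$, which is precisely the Nilpotence Theorem. The paper applies it twice, once to prove $V$ is Thomason and once to prove $\Dcal \subseteq \Tcal_V$.
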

\begin{proof}
  $(1)$: If $\Bcal \in \supph(\lambda_\Dcal)$ then $0 \neq E_\Bcal \otimes \lambda_\Dcal \in \Dcal$. By \cite[Lemma 3.5]{Bal20b}, the morphism $\yo_\Bcal(1 \to \lambda_\Dcal)$ is either monic or $\yo_\Bcal(\lambda_\Dcal) = 0$. Since $E_\Bcal \otimes \lambda_\Dcal \neq 0$, the latter cannot be the case and thus $\yo_\Bcal(1 \to \lambda_\Dcal)$ is monic, and then so is $r_\Bcal \yo_\Bcal(1 \to \lambda_\Dcal)$ in $\Acal$. Since $\widehat{ E_\Bcal} = \yo E_\Bcal$ is $\otimes$-flat in $\Acal$ by \cite[Proposition 2.9(b)]{Bal20b}, also the map $\widehat{ E_\Bcal} \otimes r_\Bcal\yo_\Bcal(1 \to \lambda_\Dcal)$ is monic in $\Acal$. By \cref{tensor-formula-Giraud}, the latter map is identified in $\Acal$ with $\yo(E_\Bcal \to E_\Bcal \otimes \lambda_\Dcal)$. Then, by definition, $E_\Bcal \to E_\Bcal \otimes \lambda_\Dcal$ is a pure monomorphism in $\Tcal$. Since $\Dcal$ is a definable $\otimes$-ideal of $\Tcal$, and thus closed under pure monomorphisms, we conclude that $E_\Bcal \in \Dcal$. On the other hand, if $E_\Bcal \in \Dcal$ then $E_\Bcal \cong E_\Bcal \otimes \lambda_\Dcal$, and so $\Bcal \in \supph(\lambda_\Dcal)$.

  \noindent
  $(2)$: By the assumption, $V$ can be written as a union $\bigcup_{i \in I}V_i$ of closed subsets of $\Spec{\Tcal^\cpt}$. Since $\yo_\Bcal(\lambda_\Dcal) = 0$ for any $\Bcal$ with $\varphi(\Bcal) \in V$, we have by the Tensor Nilpotence Theorem \cite[Corollary 4.7]{Bal20} that there is for each $i \in I$ a Thomason subset $W_i$ containing $V_i$ such that for each $x \in \Kcal_{W_i}$, the reflection map $x \to (x \otimes \lambda_\Dcal)$ is nilpotent, which implies $x \otimes \lambda_\Dcal = 0$ by \cref{tensor-trick}. Put $W = \bigcup_{i \in I}W_i$, this is a Thomason subset of $\Spec{\Tcal^\cpt}$ which contains $V$. We claim that $V=W$. Indeed, otherwise there is $i \in I$, $\PP \in W_i \setminus V$, and $\Bcal \in \Spech{\Tcal^\cpt}$ with $\varphi(\Bcal) = \PP$ such that $\Bcal \in \supph(\lambda_\Dcal)$. Then $E_\Bcal \otimes \lambda_\Dcal = E_\Bcal$ by (1), and so $(x \otimes \lambda_\Dcal) \otimes E_\Bcal = x \otimes E_\Bcal$ cannot be zero for any $x \in \Tcal^\cpt$ with $\PP \in \supp(x) \subseteq W_i$; note that such an $x$ exists by \cite[Proposition 2.14(b)]{Bal05} and that $\supph(x) = \{\Bcal \in \Spech{\Tcal^\cpt} \mid E_\Bcal \otimes x \neq 0\} = \varphi^{-1} \supp(x)$ by \cite[Proposition 4.4]{Bal20b}. This is a contradiction with $x \otimes \lambda_\Dcal = 0$, and thus $V = W$ is Thomason.

  \noindent
  The next claim is that $\Dcal \subseteq \Tcal_V$. Indeed, let $x \in \Kcal_V$ and consider the reflection map $i: x \to x \otimes \lambda_\Dcal$. Since $\supph(x) \subseteq \varphi^{-1}V$ and $\supph(\lambda_\Dcal) \cap \varphi^{-1}V = \emptyset$, we have $\yo_\Bcal(i) = 0$ for all $\Bcal \in \Spech{\Tcal^\cpt}$. Employing Tensor Nilpotence Theorem \cite[Corollary 4.7]{Bal20} again, we have that $i$ is nilpotent, which implies that $i$ is zero by \cref{tensor-trick}. This establishes $\Dcal \subseteq \Kcal_V\Perp{} = \Tcal_V$.
  
  \noindent
  $(3)$: This follows from \cite[Lemma 3.8]{BDS23} and (1). More directly, we have for any $x \in \Tcal^\cpt$ and $\Bcal \in \Spech{\Tcal^\cpt}$ that $[x,E_\Bcal] = 0$ if and only if $\varphi(\Bcal) \not\in \supp(x)$ by \cite[Proposition 4.4]{Bal20b}. On the other hand, $E_\Bcal \in \Tcal_V$ if and only if $\Hom_\Tcal(x,E_\Bcal) = 0$ for any $x \in \Kcal_V$. Since $\Hom_\Tcal(y \otimes x,E_\Bcal) \cong \Hom_\Tcal(y,[x,E_\Bcal])$ for any $y \in \Tcal^\cpt$ and $\Kcal_V$ is closed under tensoring, we see that the latter vanishing condition is further equivalent to $[x,E_\Bcal] = 0$ for any $x \in \Kcal_V$, and thus to $\varphi(\Bcal) \not\in V$.
\end{proof}

\begin{rmk}
  The following alternative slick proof of \cref{Ep}(1) using the result \cite[Proposition 4.4]{BW23} was suggested by the anonymous referee of this paper. The non-zero object $E_\Bcal \otimes \lambda_\Dcal$ belongs to the smallest definable tensor-closed (but not thick!) subcategory $\DefBW{E_\Bcal}$ of $\Tcal$ containing $E_\Bcal$. Then \cite[Proposition 4.4]{BW23} implies that $\DefBW{E_\Bcal} = \DefBW{E_\Bcal \otimes \lambda_\Dcal}$. As $E_\Bcal \otimes \lambda_\Dcal \in \Dcal$ and $\Dcal$ is a definable $\otimes$-ideal, we conclude that $E_\Bcal \in \DefBW{E_\Bcal \otimes \lambda_\Dcal} \subseteq \Dcal$.
\end{rmk}
Let $V$ be a Thomason subset of $\Spec{\Tcal^\cpt}$. Then the closure of any $\PP \in \Spec{\Tcal_V^\cpt}$ contains a closed point of $\Spec{\Tcal_V^\cpt}$ by \cite[Corollary 2.12]{Bal05}. Let us denote by $\clp(V^c)$ the set of points in $V^c$ which are closed in the topology induced by $V^c \cong \Spec{\Tcal_V^\cpt}$. It follows that any point in $V^c$ contains a point from $\clp(V^c)$ in its closure in $\Spec{\Tcal^\cpt}$.

\begin{lemma}\label{cg-general}
  Let $\Tcal$ be a big tt-category and $V$ a Thomason subset of $\Spec{\Tcal^\cpt}$. The following are equivalent:
  \begin{enumerate}
    \item[(i)] every definable $\otimes$-ideal $\Dcal$ such that $\varphi^{-1}(\clp(V^c)) \subseteq \supph(\lambda_\Dcal)$ and $\Dcal \subseteq \Tcal_V$ is compactly generated,
    \item[(ii)] $\Deft{\coprod_{\varphi(\Bcal) \in \clp(V^c)} E_\Bcal }$ is compactly generated,
    \item[(iii)] $\Deft{\coprod_{\varphi(\Bcal) \in \clp(V^c)}E_\Bcal} = \Tcal_V$. 
  \end{enumerate}
\end{lemma}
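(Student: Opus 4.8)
The plan is to prove the cycle $(i)\Rightarrow(ii)\Rightarrow(iii)\Rightarrow(i)$, pivoting on two facts about $\Dcal_0 := \Deft{\coprod_{\varphi(\Bcal)\in\clp(V^c)}E_\Bcal}$ that I would isolate first. Since each $E_\Bcal$ with $\varphi(\Bcal)\in\clp(V^c)$ is a direct summand of the coproduct it lies in the definable $\otimes$-ideal $\Dcal_0$, so \cref{Ep}(1) gives $\varphi^{-1}(\clp(V^c))\subseteq\supph(\lambda_{\Dcal_0})$; and since $\clp(V^c)\subseteq V^c$, \cref{Ep}(3) gives $E_\Bcal\in\Tcal_V$ for each such $\Bcal$, hence $\Dcal_0\subseteq\Tcal_V$. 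Together these say precisely that $\Dcal_0$ is one of the definable $\otimes$-ideals quantified over in $(i)$, so $(i)\Rightarrow(ii)$ is immediate.

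For $(iii)\Rightarrow(i)$ I would argue: if $\Dcal$ is a definable $\otimes$-ideal with $\varphi^{-1}(\clp(V^c))\subseteq\supph(\lambda_\Dcal)$ and $\Dcal\subseteq\Tcal_V$, then \cref{Ep}(1) forces $E_\Bcal\in\Dcal$ whenever $\varphi(\Bcal)\in\clp(V^c)$, so $\Dcal_0\subseteq\Dcal$ by minimality of $\Dcal_0$; combined with $\Dcal\subseteq\Tcal_V=\Dcal_0$ (using $(iii)$) this yields $\Dcal=\Tcal_V=\Kcal_V\Perp{}$, which is compactly generated.

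The real content is $(ii)\Rightarrow(iii)$. Assuming $\Dcal_0$ compactly generated, \cref{Balm-cg} lets me write $\Dcal_0=\Tcal_W$ for a Thomason $W$. I would first record the identity $\supph(\lambda_U)=\varphi^{-1}(U^c)$ valid for every Thomason $U$ (immediate from \cref{Ep}(1) and \cref{Ep}(3)), and apply it to $W$ and to $V$. Then the two facts above, together with surjectivity of $\varphi$, become $\clp(V^c)\subseteq W^c$ and $W^c\subseteq V^c$. The remaining inclusion $V^c\subseteq W^c$ is where the topology enters: for $\PP\in V^c$, the discussion preceding the lemma (via \cite[Corollary 2.12]{Bal05}) produces $\QQ\in\clp(V^c)$ lying in the closure of $\{\PP\}$ in $\Spec{\Tcal^\cpt}$; since $\QQ\in\clp(V^c)\subseteq W^c$ and $W$, being Thomason, is specialization closed, $\PP\notin W$ as well. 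Hence $W=V$, that is, $\Dcal_0=\Tcal_V$.

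The one step I expect to require genuine care, rather than bookkeeping, is this last topological argument: the point is that $\clp(V^c)$ is cofinal under specialization inside $V^c$, which is exactly the leverage needed for the comparatively sparse family $\{E_\Bcal\mid\varphi(\Bcal)\in\clp(V^c)\}$ to detect all of $\Tcal_V$. Everything else reduces to repeated application of \cref{Ep} and surjectivity of $\varphi$; note in particular that injectivity of $\varphi$ (the ``Nerves of Steel'' Conjecture) plays no role in this lemma.
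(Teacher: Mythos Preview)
Your proof is correct and follows the same route as the paper: the cycle $(i)\Rightarrow(ii)\Rightarrow(iii)\Rightarrow(i)$ via \cref{Ep}(1),(3), with the key step $(ii)\Rightarrow(iii)$ reducing to $W=V$ through the observation that $\clp(V^c)$ is cofinal under specialization in $V^c$ and Thomason sets are specialization closed. The only cosmetic difference is that you package the comparison of $W$ and $V$ through the identity $\supph(\lambda_U)=\varphi^{-1}(U^c)$ and surjectivity of $\varphi$, whereas the paper argues slightly more directly with $E_\Bcal\in\Tcal_W$ versus \cref{Ep}(3); the substance is identical.
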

\begin{proof}
  $(i) \implies (ii):$  Using \cref{Ep}(1) we see that $\Dcal = \Deft{\coprod_{\varphi(\Bcal) \in \clp(V^c)} E_\Bcal }$ satisfies $\varphi^{-1}(\clp(V^c)) \subseteq \supph(\lambda_\Dcal)$. By \cref{Ep}(3), $E_\Bcal \in \Tcal_V$ for all $\Bcal$ such that $\varphi(\Bcal) \in \clp(V^c)$, and so $\Dcal \subseteq \Tcal_V$. Then $\Dcal$ is compactly generated by the assumption $(i)$.

  \noindent
  $(ii) \implies (iii):$ By $(ii)$ and \cref{ttftriples} and \cref{Balm-cg}, there is a Thomason set $W$ such that $\Deft{\coprod_{\varphi(\Bcal) \in \clp(V^c)}E_\Bcal } = \Tcal_W$. By \cref{Ep}(3), $E_\Bcal \in \Tcal_V$ if and only if $\varphi(\Bcal) \not\in V$ for any $\Bcal \in \Spech{\Tcal^\cpt}$. This immediately implies that $\Tcal_W \subseteq \Tcal_V$ and thus $V \subseteq W$. On the other hand, we have $E_\Bcal \in \Tcal_W$ whenever $\varphi(\Bcal) \in \clp(V^c)$. If $V \subsetneq W$, then there is $\Bcal \in \Spech{\Tcal^\cpt}$ such that $\varphi(\Bcal) \in W \cap \clp(V^c)$ (see the paragraph above), which is a contradiction to \cref{Ep}(3) because $E_\Bcal \in \Tcal_W$.
  
  \noindent
  $(iii) \implies (i):$ By ~\cref{Ep}(1), $E_\Bcal \in \Dcal$ for each $\Bcal \in \clp(V^c)$. It follows that $\Deft{\coprod_{\varphi(\Bcal) \in \clp(V^c)}E_\Bcal } \subseteq \Dcal \subseteq \Tcal_V$, which by $(iii)$ implies that $\Dcal = \Tcal_V$ is compactly generated.
\end{proof}
We are almost ready to state a general characterization of $\TC$ in terms of definable $\otimes$-ideals generated by the homological residue field objects, but we need some notation first. Given $\PP \in \Spec{\Tcal^\cpt}$, let $G(\PP) = \{\QQ \in \Spec{\Tcal^\cpt} \mid \PP \subseteq \QQ\}$ be the set of generalizations of $\PP$. Note that the complement $G(\PP)^c$ in $\Spec{\Tcal^\cpt}$ is the Thomason subset $\bigcup_{x \in \PP}\supp(x)$ corresponding to the thick $\otimes$-ideal $\PP$, so that, in the notation of \cref{Balm-cg}, $\Lcal_{G(\PP)^c} = \Loc{\PP}$ is the localizing $\otimes$-ideal generated by $\PP$. For brevity, let us denote $\Tcal_\PP := \Tcal_{G(\PP)^c}$ the \newterm{stalk} tt-category at $\pp$ and $\lambda_\PP := \lambda_{G(\PP)^c}$ the corresponding right $\otimes$-idempotent. Note that $\Tcal_\PP^\cpt$ is a \newterm{local} tt-category in the sense of \cite[\S 4]{Bal10}, and thus $\clp(G(\PP)) = \{\PP\}$.

If $\Tcal$ satisfies $\NoSC$, the Balmer spectrum and the homological spectrum are identified via $\varphi$. Therefore, in such a situation we may and will index the homological residue objects as $E_\pp$ using the Balmer primes $\pp \in \Spec{\Tcal^\cpt}$.
\begin{theorem}\label{tc-general}
  Let $\Tcal$ be a big tt-category which satisfies $\NoSC$. The following are equivalent:
  \begin{enumerate}
    \item[(i)] $\Tcal$ satisfies $\TC$,
    \item[(ii)] for each Thomason subset $V$ of $\Spec{\Tcal^\cpt}$, the definable $\otimes$-ideal
    $\Deft{\coprod_{\PP \in \clp(V^c)}E_\PP}$ is compactly generated,
    \item[(iii)] for each Thomason subset $V$ of $\Spec{\Tcal^\cpt}$, we have 
    $$\Deft{\coprod_{\PP \in \clp(V^c)}E_\PP} = \Tcal_V.$$
  \end{enumerate}
\end{theorem}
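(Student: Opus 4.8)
The plan is to run the cycle $(i)\Rightarrow(ii)$, $(ii)\Leftrightarrow(iii)$, $(iii)\Rightarrow(i)$. The first two implications are formal. For $(i)\Rightarrow(ii)$: each $\Deft{\coprod_{\PP\in\clp(V^c)}E_\PP}$ is a definable $\otimes$-ideal by construction, hence compactly generated once $\TC$ holds. For $(ii)\Leftrightarrow(iii)$: this is \cref{cg-general} applied separately to each Thomason subset $V$ of $\Spec{\Tcal^\cpt}$, noting that under $\NoSC$ the set $\clp(V^c)$ parametrises, through $\varphi$, exactly the homological residue field objects appearing there, so that $(ii)$ (resp.\ $(iii)$) of the theorem is the assertion ``part $(ii)$ (resp.\ $(iii)$) of \cref{cg-general} holds for all $V$''.

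The content is $(iii)\Rightarrow(i)$. Assume $(iii)$, and let $\Dcal$ be an arbitrary definable $\otimes$-ideal, with associated right $\otimes$-idempotent $1\to\lambda_\Dcal$ and associated smashing $\otimes$-ideal $\Lcal=\Perp{}\Dcal$ (\cref{ttftriples}). Since all compacts are rigid, $\Lcal\cap\Tcal^\cpt$ is a radical thick $\otimes$-ideal of $\Tcal^\cpt$, hence equals $\Kcal_V$ for a unique Thomason subset $V$; then $\Kcal_V\subseteq\Lcal$ gives $\Dcal=\Lcal\Perp{}\subseteq\Kcal_V\Perp{}=\Tcal_V$. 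Now apply $(iii)$ to this particular $V$: $\Deft{\coprod_{\PP\in\clp(V^c)}E_\PP}=\Tcal_V$. As $\Dcal$ is a definable $\otimes$-ideal, it is therefore enough to prove that $E_\PP\in\Dcal$ for every $\PP\in\clp(V^c)$ --- then $\coprod_{\PP\in\clp(V^c)}E_\PP\in\Dcal$, whence $\Tcal_V=\Deft{\coprod_{\PP\in\clp(V^c)}E_\PP}\subseteq\Dcal\subseteq\Tcal_V$ and $\Dcal=\Tcal_V$ is compactly generated. Since $\Dcal$ was arbitrary, this establishes $\TC$.

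For this remaining claim, fix $\PP\in\clp(V^c)$ and suppose $E_\PP\notin\Dcal$; by \cref{Ep}(1), identifying $\PP$ with the homological prime over it via $\NoSC$, this means $\PP\notin\supph(\lambda_\Dcal)$, i.e.\ $E_\PP\otimes\lambda_\Dcal=0$. The argument I would use has three moves. First, $\varphi(\supph(\lambda_\Dcal))\subseteq V^c$: for $x\in\Kcal_V=\Lcal\cap\Tcal^\cpt$ and $\Bcal\in\supph(\lambda_\Dcal)$ we have $E_\Bcal\in\Dcal$ by \cref{Ep}(1), hence $E_\Bcal\cong E_\Bcal\otimes\lambda_\Dcal$, so $E_\Bcal\otimes x\cong E_\Bcal\otimes(x\otimes\lambda_\Dcal)=0$ and $\varphi(\Bcal)\notin\supp(x)$; as $V=\bigcup_{x\in\Kcal_V}\supp(x)$, this gives $\varphi(\supph(\lambda_\Dcal))\cap V=\emptyset$, equivalently $E_\QQ\otimes\lambda_\Dcal=0$ for all $\QQ\in V$. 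Second, pass to the closure $\overline{\{\PP\}}$ of $\PP$ in $\Spec{\Tcal^\cpt}$: any $\QQ\in\overline{\{\PP\}}\setminus\{\PP\}$ is a proper specialisation of $\PP$, so $\QQ\notin V^c$ (because $\PP$ is closed in $V^c$), i.e.\ $\QQ\in V$ and $E_\QQ\otimes\lambda_\Dcal=0$; together with $E_\PP\otimes\lambda_\Dcal=0$ this yields $\yo_\Bcal(\lambda_\Dcal)=0$ for every $\Bcal$ lying over the closed set $\overline{\{\PP\}}$. Third, exactly as in the proof of \cref{Ep}(2), the Tensor Nilpotence Theorem \cite[Corollary 4.7]{Bal20} applied to $\overline{\{\PP\}}$, together with \cref{tensor-trick}, produces a Thomason subset $W\supseteq\overline{\{\PP\}}$ with $x\otimes\lambda_\Dcal=0$ for all $x\in\Kcal_W$, so $\Kcal_W\subseteq\Lcal\cap\Tcal^\cpt=\Kcal_V$ and hence $W\subseteq V$; but then $\PP\in\overline{\{\PP\}}\subseteq W\subseteq V$, contradicting $\PP\in V^c$.

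I expect the main obstacle to be precisely this key step --- more exactly, the fact that $\PP$, although a closed point of the subspace $V^c$, need not be closed in $\Spec{\Tcal^\cpt}$, so that $\{\PP\}$ is not directly admissible in the Tensor Nilpotence Theorem. The purpose of the first two moves above is to show that $\lambda_\Dcal$ is already ``$\supph$-trivial'' on all of $V$, which upgrades the non-closed point $\PP$ to the genuinely closed set $\overline{\{\PP\}}$ on which Tensor Nilpotence can be applied. (An alternative would be to localise to the stalk $\Tcal_V$, in which $\PP$ is a closed point, and run Tensor Nilpotence there; that route would in addition require the compatibility of the homological residue field objects with finite localisation, which the argument above sidesteps.)
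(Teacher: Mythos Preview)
Your cycle and the first two implications are fine, and your choice of the Thomason set $V$ corresponding to $\Lcal\cap\Tcal^\cpt$ is reasonable. The gap is in Step~3. Tensor Nilpotence \cite[Corollary~4.7]{Bal20} says: if $\yo_\Bcal(f)=0$ for \emph{every} $\Bcal$, then $f$ is $\otimes$-nilpotent. Applied to $f\colon x\to x\otimes\lambda_\Dcal$, this yields $x\otimes\lambda_\Dcal=0$ whenever $\supp(x)$ lies inside the vanishing locus $\supph(\lambda_\Dcal)^c$. But it does \emph{not} manufacture a Thomason $W\supseteq\overline{\{\PP\}}$ with $\Kcal_W\subseteq\Lcal$: for that you would need some compact $x$ with $\PP\in\supp(x)$ and $\supp(x)$ contained in the vanishing locus. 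From Steps~1--2 you only know the vanishing locus contains $V\cup\{\PP\}$, so you would need $\PP\in\supp(x)\subseteq V\cup\{\PP\}$, i.e.\ that the closed point $\PP$ is Thomason in $V^c\cong\Spec{\Tcal_V^\cpt}$. This fails in non-noetherian situations (e.g.\ the maximal ideal in $k[x_1,x_2,\dots]_{(x_1,x_2,\dots)}$ is closed but not Thomason). Your passage from $\{\PP\}$ to the closed set $\overline{\{\PP\}}$ does not address this: the obstruction is not closedness in $\Spec{\Tcal^\cpt}$ but the lack of a compact object with the required support. The sentence ``exactly as in the proof of \cref{Ep}(2)'' therefore does not go through, because in \cref{Ep}(2) the hypothesis is that the \emph{entire} set $\varphi(\supph(\lambda_\Dcal))^c$ is specialization closed, and that global hypothesis is what ultimately forces it to be Thomason.

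The paper takes a different route precisely to secure this hypothesis: it sets $V:=\supph(\lambda_\Dcal)^c$ from the start and uses $(iii)$ not once but at \emph{every} stalk, applying it to each Thomason set $G(\PP)^c$ with $\PP\notin V$ to deduce $\Tcal_\PP=\Deft{E_\PP}\subseteq\Dcal$, hence $E_\QQ\in\Dcal$ for all generalisations $\QQ$ of $\PP$. This shows $V$ is specialization closed, after which \cref{Ep}(2) applies in full and gives that $V$ is Thomason with $\Dcal\subseteq\Tcal_V$; the conclusion then follows as in your final paragraph. The missing idea in your argument is exactly this repeated use of $(iii)$ at stalk level to control all of $\supph(\lambda_\Dcal)^c$, rather than a single application at the end.
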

\begin{proof}
  $(i) \implies (ii):$ Clear.

  \noindent
  $(ii) \implies (iii):$ Follows directly from the implication $(ii) \implies (iii)$ of \cref{cg-general}.

  \noindent
  $(iii) \implies (i):$ Let $\Dcal$ be a definable $\otimes$-ideal in $\Tcal$ and let us show that it is compactly generated. Put $V = \supph(\lambda_\Dcal)^c$ and recall that $V = \{\PP \in \Spec{\Tcal^\cpt} \mid E_\PP \otimes \lambda_\Dcal =0\}$. We claim that $V$ is a specialization closed set. Indeed, let $\pp \not\in V$ so that $E_\PP \otimes \lambda_\Dcal \neq 0$. Now apply the assumption (iii) for the Thomason set $G(\PP)^c$. Since $\clp(G(\PP)) = \{\PP\}$, we obtain $\Deft{E_\PP} = \Tcal_{\PP}$. We have $E_\PP \in \Dcal$ by \cref{Ep}(1), which implies that $\Tcal_{\PP}  = \Deft{E_\PP} \subseteq \Dcal$. It follows that $E_\QQ \in \Dcal$ for any $\QQ \in G(\PP)$. By \cref{Ep}(1), $E_\QQ \otimes \lambda_\Dcal \neq 0$ for any $\QQ \in G(\PP)$, proving the claim. Now \cref{Ep}(2) applies and shows that $V$ is a Thomason subset of $\Spec{\Tcal^\cpt}$ and that $\Dcal \subseteq \Tcal_V$. Since $\supph(\lambda_\Dcal) = V^c$, we have $\Deft{\coprod_{\PP \in \clp(V^c)}E_\PP} \subseteq \Dcal$ by \cref{Ep}(1). Then $\Dcal = \Tcal_V$ by the implication $(i) \implies (iii)$ of \cref{cg-general} and the assumption $(iii)$ applied for $V$.
\end{proof}

\section{Stalk-locality of Telescope Conjecture}\label{S:three}
In this section we consider locality principles which allow to check $\TC$ locally on covers of $\Spec{\Tcal^\cpt}$. The essential result in this direction is the affine-locality of Balmer and Favi \cite{BF11} which applies to finite covers of $\Spec{\Tcal^\cpt}$ by quasi-compact open sets; here the nomenclature comes from the usual case of the cover of a quasi-compact scheme by open affine subsets. This locality result does not depend on any noetherian assumptions on $\Spec{\Tcal^\cpt}$ or any good behavior of the support theory of $\Tcal$ and has applications even when $\TC$ ends up failing in $\Tcal$ (see \cref{Zariski-loc} below). We remark that this makes this setting rather different from another extremely fruitful approach to locality of $\TC$: The stratification machinery of Benson, Iyengar, and Krause \cite{BIK11}, see also Barthel, Heard, and Sanders for a more recent tensor triangulated approach \cite{BHS23}. In fact, in our examples, it can easily happen that $\TC$ holds but there is no classification of (non-smashing) localizing $\otimes$-ideals in terms of subsets of $\Spec{\Tcal^\cpt}$, see \cite{DP08}, cf. \cref{ex:trunc}. Our goal here is to study a stronger kind of locality with respect to the cover of $\Spec{\Tcal^\cpt}$ by the spectra of stalks $\Spec{\Tcal_\PP^\cpt}$ at all (closed) points, as motivated by the recent result for schemes from \cite{HHZ21}.
\subsection{Affine-locality of $\TC$}
Balmer and Favi \cite{BF11} proved that $\TC$ is an \newterm{affine-local} condition on $\Spec{\Tcal^\cpt}$, meaning that it holds in $\Tcal$ if and only if it holds in $\Tcal_{U_i^c}$ where $\Spec{\Tcal^\cpt} = \bigcup_{i=1}^n U_i$ is a cover by quasi-compact open subsets $U_i$. Actually a slightly stronger statement can be extracted from \cite{BF11}: Given a fixed definable $\otimes$-ideal, its compact generation is affine-local. 
\begin{prop}\label{Zariski-loc}
  Let $\Tcal$ be a big tt-category and $\Dcal$ a definable $\otimes$-ideal in $\Tcal$. Let $\Spec{\Tcal^\cpt} = \bigcup_{i=1}^n U_i$ be a cover by quasi-compact open sets. Then $\Dcal$ is compactly generated in $\Tcal$ if and only if $\Dcal \cap \Tcal_{U_i^c}$ is compactly generated in $\Tcal_{U_i^c}$ for each $i=1,\ldots,n$.
\end{prop}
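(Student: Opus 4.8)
The plan is to translate the statement into the language of Thomason subsets via the bijections of \cref{Balm-cg} and \cref{ttftriples}, and then to glue. Write $V_i=U_i^c$, a Thomason subset of $\Spec{\Tcal^\cpt}$, so that $\Tcal_{U_i^c}=\Tcal_{V_i}$ is a big tt-category (\cite[Theorem 4.1]{BF11}) with $\Spec{\Tcal_{V_i}^\cpt}\cong U_i$ (\cite[Proposition 3.11]{Bal05}), its unit being $\lambda_{V_i}$. The ingredient I would set up first is the dictionary, for a Thomason set $V$ with $U=V^c$: the Thomason subsets of $U\cong\Spec{\Tcal_V^\cpt}$ are exactly the $W\cap U$ with $W\subseteq\Spec{\Tcal^\cpt}$ Thomason and $W\supseteq V$ (inverse $T\mapsto T\cup V$), and the compactly generated definable $\otimes$-ideal of $\Tcal_V$ labelled by $W\cap U$ is $\Tcal_W$, which, being contained in $\Tcal_V$, is indeed a definable $\otimes$-ideal of $\Tcal_V$. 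I also need two easy facts: (a) $\Tcal_A\cap\Tcal_B=\Tcal_{A\cup B}$ for Thomason $A,B$ (both sides equal $(\Kcal_A+\Kcal_B)\Perp{}$); and (b) $\Dcal\cap\Tcal_V$ is the definable $\otimes$-ideal of $\Tcal_V$ with right $\otimes$-idempotent $\lambda_V\otimes(1\to\lambda_\Dcal)$ — this is a right $\otimes$-idempotent of $\Tcal_V$ because $\lambda_V$ is $\otimes$-idempotent, and comparing essential images (using $\lambda_V\otimes Y\cong Y$ on $\Tcal_V$) identifies its image with $\Dcal\cap\Tcal_V$. With the dictionary in hand the forward implication is immediate: if $\Dcal=\Tcal_W$ then $\Dcal\cap\Tcal_{V_i}=\Tcal_{W\cup V_i}$ is the compactly generated definable $\otimes$-ideal of $\Tcal_{V_i}$ attached to $(W\cup V_i)\cap U_i=W\cap U_i$.

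For the converse, suppose each $\Dcal\cap\Tcal_{V_i}$ is compactly generated in $\Tcal_{V_i}$. By the dictionary, $\Dcal\cap\Tcal_{V_i}=\Tcal_{\widehat{W_i}}$ for a unique Thomason set $\widehat{W_i}\supseteq V_i$ in $\Spec{\Tcal^\cpt}$. Computing $\Dcal\cap\Tcal_{V_i}\cap\Tcal_{V_j}$ in the two obvious ways and using (a) together with the injectivity of $W\mapsto\Tcal_W$ (\cref{Balm-cg}) yields the compatibility relation $\widehat{W_i}\cup V_j=\widehat{W_j}\cup V_i$ for all $i,j$. Now put $W:=\bigcap_{i=1}^n\widehat{W_i}$; this is Thomason, being a finite intersection of Thomason subsets. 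One then checks $W\cup V_i=\widehat{W_i}$ for every $i$: "$\subseteq$" is clear since $W\subseteq\widehat{W_i}$ and $V_i\subseteq\widehat{W_i}$; for "$\supseteq$", take $p\in\widehat{W_i}\setminus V_i$ and any $j$, then $p\in\widehat{W_i}\subseteq\widehat{W_j}\cup V_i$ with $p\notin V_i$, so $p\in\widehat{W_j}$, whence $p\in W$. Consequently $\Tcal_W\cap\Tcal_{V_i}=\Tcal_{W\cup V_i}=\Tcal_{\widehat{W_i}}=\Dcal\cap\Tcal_{V_i}$ for all $i$.

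It remains to promote this local agreement to $\Dcal=\Tcal_W$, and this is the step where the hypothesis $\bigcup_i U_i=\Spec{\Tcal^\cpt}$, i.e. $\bigcap_i V_i=\emptyset$, is used. The localizations $\lambda_{V_i}\otimes-\colon\Tcal\to\Tcal_{V_i}$ are jointly conservative: if $\lambda_{V_i}\otimes Y=0$ for all $i$ then, from the triangles $e_{V_i}\otimes Y\to Y\to\lambda_{V_i}\otimes Y$ (with $e_{V_i}$ the fibre of $1\to\lambda_{V_i}$), $Y\cong e_{V_i}\otimes Y$ for each $i$, hence $Y\cong(\bigotimes_{i=1}^n e_{V_i})\otimes Y\cong e_{\bigcap_i V_i}\otimes Y=e_\emptyset\otimes Y=0$, using the multiplicativity $e_A\otimes e_B\cong e_{A\cap B}$ of \cite[Proposition 3.11]{BF11} and $e_\emptyset=0$. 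Since for a definable $\otimes$-ideal $\Ccal$ with fibre idempotent $e_\Ccal$ of $1\to\lambda_\Ccal$ one has $X\in\Ccal\iff e_\Ccal\otimes X=0$: if $X\in\Dcal$ then $\lambda_{V_i}\otimes X\in\Dcal\cap\Tcal_{V_i}=\Tcal_W\cap\Tcal_{V_i}$, so $\lambda_{V_i}\otimes(e_W\otimes X)=e_W\otimes(\lambda_{V_i}\otimes X)=0$ for all $i$, whence $e_W\otimes X=0$, i.e. $X\in\Tcal_W$; and symmetrically. Thus $\Dcal=\Tcal_W$ is compactly generated.

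No single step here is a serious obstacle; the part I expect to need the most care is making the dictionary of the first paragraph precise and checking it is compatible across different choices of $V$ — e.g. that "$\widehat{W_i}\cap U_j$" may be read interchangeably as a Thomason subset of $U_j$ or of $U_i\cap U_j$ — after which the gluing and the joint-conservativity argument are routine. The whole argument is essentially Balmer and Favi's proof of the affine-locality of $\TC$ in \cite{BF11}, isolated so as to apply to a single fixed definable $\otimes$-ideal.
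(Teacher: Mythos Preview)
Your argument is correct, and it arrives at the same candidate Thomason subset $W$ as the paper (your $W=\bigcap_i\widehat{W_i}$ coincides with the paper's $\bigcup_i V_i$ once one checks $W\cap U_i=\widehat{W_i}\cap U_i$). The difference lies in the final gluing step showing $\Dcal=\Tcal_W$. The paper first argues $\Dcal\subseteq\Tcal_V$ (citing either \cite[Theorem 6.6]{BF11} or the Tensor-Nilpotence-based \cref{Ep}(2)), then passes to $\Tcal_V$ to reduce to the case $W=\emptyset$, and finally uses the Mayer--Vietoris triangle $1\to\lambda_{U_1^c}\oplus\lambda_{U_2^c}\to\lambda_{(U_1\cap U_2)^c}\mathrel{\leadsto}$ of \cite[Theorem 5.18]{BF11} to deduce $1\in\Dcal$. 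You instead prove both inclusions in one stroke via joint conservativity of the $\lambda_{V_i}\otimes-$, using only the multiplicativity $e_{V_1}\otimes\cdots\otimes e_{V_n}\cong e_{\bigcap_i V_i}=e_\emptyset=0$. Your route is a bit more elementary: it avoids both the Mayer--Vietoris triangle and the possible detour through \cref{Ep}(2), at the cost of the small combinatorial verification $W\cup V_i=\widehat{W_i}$. One small point: the reference \cite[Proposition 3.11]{BF11} is stated in the paper for right idempotents ($\lambda_{V_1}\otimes\lambda_{V_2}\cong\lambda_{V_1\cup V_2}$); the left-idempotent version $e_A\otimes e_B\cong e_{A\cap B}$ you use is the dual statement and follows from the same machinery, but you may want to point this out explicitly.
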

\begin{proof}
  Denote $\Tcal_i := \Tcal_{U_i^c}$. The left to right implication follows readily, as if $\Dcal = \Scal\Perp{}$ for some subcategory $\Scal$ of $\Tcal^\cpt$ then $\Dcal \cap \Tcal_i = \{S \otimes \lambda_{U_i^c} \mid S \in \Scal\}\Perp{}$ for each $i=1,\ldots,n$, noting that $S \otimes \lambda_{U_i^c}$ is a compact object in $\Tcal_i$ whenever $S \in \Tcal^\cpt$.

  \noindent
  For the converse implication, let $V_i$ be the Thomason subset of $U_i \cong \Spec{\Tcal_i^\cpt}$ such that $\Dcal \cap \Tcal_i = ({\Tcal_i})_{V_i}$, see \cref{Balm-cg}. Consider each $V_i$ as a subset of $\Spec{\Tcal^\cpt}$ and put $V = \bigcup_{i=1}^n V_i$. Then $V$ is a Thomason subset of $\Spec{\Tcal^\cpt}$ by the argument in the proof of \cite[Theorem 6.6]{BF11}. Alternatively, one can argue using \cref{Ep}(2) by noticing that first that $V$ is clearly specialization closed in $\Spec{\Tcal^\cpt}$ and that $V = \varphi(\supph{\lambda_\Dcal})^c$ using \cref{Ep}(1),(3) and the fact that $\supph$ can be computed locally. \cref{Ep}(2) also yields $\Dcal \subseteq \Tcal_V$. It remains to show that $\lambda_V \in \Dcal$ so that $\Dcal = \Tcal_V$. By passing from $\Tcal$ to $\Tcal_V$, the statement reduces to the case of $V = \emptyset$. By induction, we can assume that we have a cover $\Spec{\Tcal^\cpt} = U_1 \cup U_2$ by just two quasi-compact opens. Using \cite[Theorem 5.18]{BF11}, there is a triangle in $\Tcal$ of the form $1 \to \lambda_{U_1^\cpt} \oplus \lambda_{U_2^\cpt} \to \lambda_{(U_1 \cap U_2)^\cpt} \mathrel{\leadsto}$. Since $\Tcal_i \subseteq \Dcal$ for $i=1,2$, the second and third term of the triangle belongs to $\Dcal$, and so $1 \in \Dcal$.
\end{proof}
\subsection{Stalk-locality of $\TC$}\label{ss-stalkloc} It was shown in \cite[Theorem 4.9]{HHZ21} that in the case of the derived category of a quasi-compact and quasi-separated scheme $X$, a stronger locality principle holds: $\TC$ holds in $\D(X)$ whenever it holds in the stalk tt-category $\D(O_x)$ over each (closed) point $x \in X$. As in the case of affine-locality, it also holds that the compact generation of a given definable $\otimes$-ideal can be checked stalk-locally in this situation \cite[Proposition 3.13]{HHZ21}.

Motivated by this, we can formulate a similar condition in $\Tcal$. We say that a big tt-category satisfies the \newterm{Stalk-Locality Principle $\SLP$} if any definable $\otimes$-ideal $\Dcal$ of $\Tcal$ is compactly generated whenever $\Dcal \cap \Tcal_\PP$ is compactly generated in $\Tcal_\PP$ for any point $\PP \in \Spec{\Tcal^\cpt}$. In fact, it is sufficient to check the latter condition just on the closed points of $\Spec{\Tcal^\cpt}$, as $\Dcal \cap \Tcal_\PP$ being compactly generated in $\Tcal_\PP$ implies that $\Dcal \cap \Tcal_\QQ$ is compactly generated in $\Tcal_\QQ$ whenever $\PP$ is in the closure of $\QQ$ (cf. \cite[Proposition 4.4]{BF11} and the paragraph before \cref{cg-general}). This is a stronger locality property than affine-locality, but it is not clear to the author if it is satisfied by all big tt-categories.  

In our attempt to provide a local version of \cref{tc-general}, it will make sense to consider a stronger condition in which stalk-locality of compact generation holds in all compact localizations of $\Tcal$. The satisfaction of $\TC$ passes from $\Tcal$ to $\Tcal_V$ for any Thomason set $V$, and in fact, to any smashing localization of $\Tcal$, see \cite[Proposition 4.4]{BF11}. However, the same is not so clear for $\SLP$. In the following, we characterize the situation in which $\SLP$ holds in each compact localization $\Tcal_V$ of $\Tcal$ for all Thomason sets $V$.
\begin{prop}\label{cg-stalklocal}
  Let $\Tcal$ be a big tt-category. The following are equivalent:
  \begin{enumerate}
    \item[(i)] a definable $\otimes$-ideal $\Dcal$ of $\Tcal$ is compactly generated if and only if $\Dcal \cap \Tcal_\PP = \Tcal_\PP$ for each $\PP \in \varphi(\supph(\lambda_\Dcal))$,
    \item[(ii)] $\SLP$ holds in $\Tcal_V$ for any Thomason subset $V$ of $\Spec{\Tcal^\cpt}$,
    \item[(iii)] for any Thomason subset $V$ of $\Spec{\Tcal^\cpt}$, the definable $\otimes$-ideal $\Deft{\coprod_{\PP \not\in V}\lambda_{\PP}}$ is compactly generated.
    \item[(iv)] for any Thomason subset $V$ of $\Spec{\Tcal^\cpt}$, we have 
    $$\Deft{\coprod_{\substack{\MM \in \clp(V^c)}}\lambda_\MM } = \Tcal_V.$$
  \end{enumerate}
\end{prop}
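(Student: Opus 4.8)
The plan is to run the cyclic chain $(\mathrm{iv}) \Rightarrow (\mathrm{i}) \Rightarrow (\mathrm{ii}) \Rightarrow (\mathrm{iii}) \Rightarrow (\mathrm{iv})$, in the spirit of the proof of \cref{tc-general} but with the stalk idempotents $\lambda_\PP$ playing the role of the homological residue fields $E_\PP$; this substitution is exactly what lets us dispense with $\NoSC$. I will use the following facts throughout. \textbf{(a)} For every $\PP$ one has $\Deft{\lambda_\PP} = \Tcal_\PP$, since the left side is a definable $\otimes$-ideal containing the $\otimes$-unit $\lambda_\PP$ of $\Tcal_\PP$ while any definable $\otimes$-ideal containing $\lambda_\PP$ absorbs $X \otimes \lambda_\PP$ for all $X$; combining this with the monotonicity $\MM \subseteq \PP \Rightarrow \Tcal_\PP \subseteq \Tcal_\MM$ (which is just $G(\PP) \subseteq G(\MM)$) and the fact, recorded before \cref{cg-general}, that every point of $V^c$ has a point of $\clp(V^c)$ in its closure, we get for a Thomason set $V$ that $\Dcal_0 := \Deft{\coprod_{\PP \notin V}\lambda_\PP}$ equals $\Deft{\coprod_{\MM \in \clp(V^c)}\lambda_\MM}$ and satisfies $\Tcal_\PP \subseteq \Dcal_0 \subseteq \Tcal_V$ for all $\PP \notin V$ (the second inclusion because $\Tcal_\PP \subseteq \Tcal_V$ when $\PP \notin V$). \textbf{(b)} By \cref{Ep}(1),(3), $\supph(\lambda_W) = \varphi^{-1}(W^c)$, hence $\varphi(\supph(\lambda_W)) = W^c$, for every Thomason $W$; and the same computation applied to $\Dcal_0$ — using $\Tcal_\PP \subseteq \Dcal_0$ for the inclusion $\varphi^{-1}(V^c) \subseteq \supph(\lambda_{\Dcal_0})$ and $\Dcal_0 \subseteq \Tcal_V$ (so $\lambda_{\Dcal_0} \cong \lambda_{\Dcal_0}\otimes\lambda_V$) for the reverse one — gives $\varphi(\supph(\lambda_{\Dcal_0})) = V^c$. \textbf{(c)} If $\Sfr$ is a \emph{local} big tt-category with closed point $\mathfrak{s} \in \Spec{\Sfr^\cpt}$, then a compactly generated definable $\otimes$-ideal $\Ecal$ of $\Sfr$ equals $\Sfr$ as soon as $\mathfrak{s} \in \varphi(\supph(\lambda_\Ecal))$: indeed $\Ecal = \Sfr_W$ for some Thomason $W$, and $\varphi(\supph(\lambda_\Ecal)) = W^c$ by \textbf{(b)}, so $\mathfrak{s} \notin W$, whence $W = \emptyset$ because $\mathfrak{s}$ lies in every nonempty specialization closed subset. \textbf{(d)} For a Thomason $V$, passage to the compact localization $\Tcal_V$ is compatible with all the relevant structure (\cite[Proposition 3.11]{Bal05}, \cite[Theorem 4.1]{BF11}, and the homological-spectrum version of this in \cite{BDS23}): a subcategory $\Dcal \subseteq \Tcal_V$ is a definable $\otimes$-ideal of $\Tcal_V$ iff it is one of $\Tcal$, it is compactly generated in $\Tcal_V$ iff in $\Tcal$, and $(\Tcal_V)_\PP = \Tcal_\PP$ for $\PP \in \Spec{\Tcal_V^\cpt} = V^c$ (with matching homological residue fields).

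The two ``formal'' implications are then short. For $(\mathrm{iii}) \Rightarrow (\mathrm{iv})$: by $(\mathrm{iii})$ and \cref{Balm-cg}, $\Dcal_0 = \Tcal_W$ for some Thomason $W$, so $W^c = \varphi(\supph(\lambda_{\Dcal_0})) = V^c$ by \textbf{(b)}, hence $W = V$ and $\Dcal_0 = \Tcal_V$, which is $(\mathrm{iv})$ in view of \textbf{(a)}. For $(\mathrm{iv}) \Rightarrow (\mathrm{i})$: the ``only if'' direction of $(\mathrm{i})$ holds in any big tt-category (if $\Dcal = \Tcal_W$ then $\varphi(\supph(\lambda_\Dcal)) = W^c$ by \textbf{(b)}, and $\Tcal_\PP \subseteq \Tcal_W$ for $\PP \notin W$ by specialization-closedness of $W$). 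For the ``if'' direction, let $\Dcal$ be a definable $\otimes$-ideal with $\Tcal_\PP \subseteq \Dcal$ for all $\PP \in \varphi(\supph(\lambda_\Dcal))$ and put $V := \varphi(\supph(\lambda_\Dcal))^c$. First check $V$ is specialization closed: if $\PP \notin V$ and $\QQ \supseteq \PP$ then $\Tcal_\QQ \subseteq \Tcal_\PP \subseteq \Dcal$, so $\lambda_\QQ \in \Dcal$; picking $\Bcal$ over $\QQ$, \cref{Ep}(3) gives $E_\Bcal \in \Tcal_\QQ \subseteq \Dcal$, so $\Bcal \in \supph(\lambda_\Dcal)$ by \cref{Ep}(1), i.e. $\QQ \notin V$. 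Now \cref{Ep}(2) gives that $V$ is Thomason and $\Dcal \subseteq \Tcal_V$; and $\Tcal_\PP \subseteq \Dcal$ for all $\PP \notin V$ gives $\Dcal_0 \subseteq \Dcal$, with $\Dcal_0 = \Tcal_V$ by $(\mathrm{iv})$ and \textbf{(a)}, so $\Dcal = \Tcal_V$ is compactly generated.

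It remains to close the loop with the two ``local'' implications, using \textbf{(d)}. For $(\mathrm{i}) \Rightarrow (\mathrm{ii})$: fix a Thomason $V$ and a definable $\otimes$-ideal $\Dcal'$ of $\Tcal_V$ with $\Dcal' \cap (\Tcal_V)_\PP$ compactly generated in $(\Tcal_V)_\PP$ for every $\PP \in \Spec{\Tcal_V^\cpt}$. Regarding $\Dcal'$ as a definable $\otimes$-ideal of $\Tcal$ contained in $\Tcal_V$, we have $\varphi(\supph(\lambda_{\Dcal'})) \subseteq V^c$ (as $\lambda_{\Dcal'} \cong \lambda_{\Dcal'}\otimes\lambda_V$); and for any $\PP \in \varphi(\supph(\lambda_{\Dcal'}))$, \cref{Ep}(1) gives $E_\Bcal \in \Dcal'$ for some $\Bcal$ over $\PP$, hence (by \cref{Ep}(3)) $E_\Bcal \in \Dcal' \cap \Tcal_\PP$, which is compactly generated in the local category $\Tcal_\PP = (\Tcal_V)_\PP$, so \textbf{(c)} forces $\Tcal_\PP = \Dcal' \cap \Tcal_\PP \subseteq \Dcal'$. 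Thus $(\mathrm{i})$ applies and $\Dcal'$ is compactly generated in $\Tcal$, hence in $\Tcal_V$. For $(\mathrm{ii}) \Rightarrow (\mathrm{iii})$: given a Thomason $V$, the definable $\otimes$-ideal $\Dcal_0 \subseteq \Tcal_V$ (a definable $\otimes$-ideal of $\Tcal_V$ by \textbf{(d)}) satisfies $\Dcal_0 \cap (\Tcal_V)_\PP = \Dcal_0 \cap \Tcal_\PP = \Tcal_\PP = (\Tcal_V)_\PP$ for every $\PP \in V^c$ by \textbf{(a)}, which is trivially compactly generated in $\Tcal_\PP$; so $\SLP$ in $\Tcal_V$ forces $\Dcal_0$ to be compactly generated in $\Tcal_V$, hence in $\Tcal$, which is $(\mathrm{iii})$.

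The genuinely non-formal ingredients are \cref{Ep}(2) — which upgrades ``specialization closed'' to ``Thomason'' via Balmer's Tensor Nilpotence Theorem — and the compatibility statements \textbf{(d)} between $\Tcal$ and its compact localizations, including on the homological side, which is where a little extra care is needed because $\NoSC$ is not assumed. I expect the main pressure point of the write-up to be establishing \textbf{(d)} cleanly and verifying, in $(\mathrm{iv}) \Rightarrow (\mathrm{i})$, that $\varphi(\supph(\lambda_\Dcal))^c$ is specialization closed under the stated hypothesis: this is precisely the step where the stalkwise assumption is spent, after which \cref{Ep}(2) pins down $\Dcal$ as a compactly generated $\Tcal_V$. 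Everything else is bookkeeping with homological supports and with the closure operator $\Deft{-}$, mirroring the proof of \cref{tc-general} with $\lambda_\PP$ in place of $E_\PP$.
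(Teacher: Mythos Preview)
Your proof is correct and follows essentially the same cyclic route $(i)\Rightarrow(ii)\Rightarrow(iii)\Rightarrow(iv)\Rightarrow(i)$ as the paper, with the same key ingredients: \cref{Ep}(1)--(3) to identify supports and to upgrade specialization closed to Thomason, and the locality of $\Tcal_\PP$ to force a compactly generated definable $\otimes$-ideal containing $E_\Bcal$ (for $\varphi(\Bcal)=\PP$) to be all of $\Tcal_\PP$. Your packaging of the auxiliary facts as \textbf{(a)}--\textbf{(d)} is a bit more explicit than the paper---in particular your direct verification of the ``only if'' part of $(i)$ via $\Tcal_\PP\subseteq\Tcal_W$ for $\PP\notin W$ is cleaner than the paper's terse appeal to $\Dcal\cap\Tcal_\PP$ being compactly generated---but there is no substantive difference in strategy.
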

\begin{proof}
  $(i) \implies (ii):$ Let $\Dcal$ be a definable $\otimes$-ideal of $\Tcal_V$ and let us show that its compact generation follows from the compact generation of $\Dcal \cap (\Tcal_V)_\PP$ for each point $\PP \in \Spec{\Tcal_V^\cpt} \cong V^c \subseteq \Spec{\Tcal^\cpt}$. Clearly, this is the same as assuming that $\Dcal \cap \Tcal_\PP$ is compactly generated in $\Tcal_\PP$ for each point $\PP \in V^c$, note that $\Dcal$ is also a definable $\otimes$-ideal inside $\Tcal$. Since $\varphi(\supph{\lambda_\Dcal}) \subseteq V^c$ and $\Dcal \cap \Tcal_\PP = \lambda_\PP \otimes \Dcal$ is compactly generated in $\Tcal_\PP$, it follows that $\Dcal \cap \Tcal_\PP = \Tcal_\PP$ for each $\PP \in \varphi(\supph{\lambda_\Dcal})$, using \cref{Ep}(1),(3). Therefore, $\Dcal$ is compactly generated in $\Tcal$ by (i), and thus it is compactly generated also in $\Tcal_V$.

  \noindent
  $(ii) \implies (iii)$:  Let $\Dcal = \Deft{\coprod_{\PP \not\in V}\lambda_{\PP}} \subseteq \Tcal_V$ and consider it as a definable $\otimes$-ideal inside $\Tcal_V$. Clearly, $\Dcal \cap (\Tcal_V)_\PP = (\Tcal_V)_\PP = \Tcal_\PP$ is compactly generated in $\Tcal_V$ for any $\PP \in V^c$. By $(ii)$, $\Dcal$ is compactly generated in $\Tcal_V$.

  \noindent
  $(iii) \implies (iv):$ In view of the discussion before \cref{cg-general}, $\Dcal = \Deft{\coprod_{\substack{\MM \in \clp(V^c)}}\lambda_\MM } = \Deft{\coprod_{\PP \not\in V}\lambda_{\PP}}$, and the latter definable $\otimes$-ideal is compactly generated by $(iii)$, and thus corresponds to some Thomason subset $W$ of $\Spec{\Tcal_V^\cpt} \cong V^c$ via \cref{Balm-cg}. But since $\lambda_\PP \in \Dcal$ for all $\PP \in V^c$, the only possibility is $W = \emptyset$ and $\Dcal = \Tcal_V$.
  \noindent
  $(iv) \implies (i):$ First, if $\Dcal$ is compactly generated in $\Tcal$ then so is $\Dcal \cap \Tcal_\PP = \lambda_\PP \otimes \Dcal$ in $\Tcal_\PP$, so we only need to prove the converse. Put $V = \varphi(\supph(\lambda_\Dcal))^c$ and let us first note that this is a specialization closed set. Indeed, whenever $\PP \in \varphi(\supph(\lambda_\Dcal))$ then the assumption ensures that $\Tcal_{\PP} \subseteq \Dcal$ and so $\QQ \in \varphi(\supph(\lambda_\Dcal))$ whenever $\QQ$ generalizes $\PP$. By \cref{Ep}(2), $V$ is a Thomason subset and $\Dcal \subseteq \Tcal_V$. If $\MM \in \clp(\varphi(\supph(\lambda_\Dcal)))$ then $\lambda_\MM \in \Dcal$ by the assumption. It follows by $(iii)$ that $\Tcal_V \subseteq \Dcal$, and so $\Dcal = \Tcal_V$ is compactly generated.
\end{proof}

If the equivalent conditions of \cref{cg-stalklocal} hold for $\Tcal$ we will say that $\Tcal$ satisfies the \newterm{Hereditary Stalk-Locality Principle $\HSLP$}.

\begin{rmk}
Note that by \cref{Zariski-loc}, the validity of the equivalent conditions of \cref{cg-stalklocal} in $\Tcal$ can be checked on a cover by quasi-compact open sets. 
\end{rmk}

If we found ourselves in the lucky situation in which both the compact generation is stalk-local in every compact localization and the ``Nerves of Steel'' Conjecture holds, we obtain the following neat reformulation of the Telescope Conjecture.
\begin{cor}\label{tc-stalklocal}
  Let $\Tcal$ be a big tt-category satisfying both $\NoSC$ and $\HSLP$. Then $\TC$ holds in $\Tcal$ if and only if for each $\PP \in \Spec{\Tcal^\cpt}$ we have $\Deft{E_\PP} = \Tcal_\PP$.
\end{cor}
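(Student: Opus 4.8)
The plan is to read the corollary off from the two main results of the preceding sections: \cref{tc-general} (available because $\Tcal$ satisfies $\NoSC$, which also legitimizes indexing the homological residue field objects as $E_\PP$ by Balmer primes) and \cref{cg-stalklocal}, whose equivalent conditions constitute $\HSLP$. The only structural fact I will need repeatedly is that the stalk $\Tcal_\PP = \Tcal_{G(\PP)^c}$ is a \emph{local} tt-category, so that $\clp(G(\PP)) = \{\PP\}$, as recorded just before \cref{cg-general}; I also use that $\lambda_\PP$ is the tensor unit of $\Tcal_\PP$ and hence lies in $\Tcal_\PP$.

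For the forward implication I would simply specialize \cref{tc-general}. Assuming $\TC$, the implication $(i) \Rightarrow (iii)$ of \cref{tc-general} applied to the Thomason set $V := G(\PP)^c$ gives $\Deft{\coprod_{\QQ \in \clp(V^c)} E_\QQ} = \Tcal_V$; since $V^c = G(\PP)$ and $\clp(G(\PP)) = \{\PP\}$, the coproduct collapses to $E_\PP$ and this reads exactly $\Deft{E_\PP} = \Tcal_{G(\PP)^c} = \Tcal_\PP$, for every $\PP \in \Spec{\Tcal^\cpt}$.

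For the converse, assume $\Deft{E_\PP} = \Tcal_\PP$ for all $\PP$, let $\Dcal$ be an arbitrary definable $\otimes$-ideal of $\Tcal$, and verify criterion $(i)$ of \cref{cg-stalklocal}. Fix $\PP \in \varphi(\supph(\lambda_\Dcal))$. By \cref{Ep}(1) we have $E_\PP \in \Dcal$, hence $\Tcal_\PP = \Deft{E_\PP} \subseteq \Dcal$, i.e.\ $\Dcal \cap \Tcal_\PP = \Tcal_\PP$. Since $\PP$ was arbitrary in $\varphi(\supph(\lambda_\Dcal))$, condition $(i)$ of \cref{cg-stalklocal} yields that $\Dcal$ is compactly generated; as $\Dcal$ was arbitrary, $\TC$ holds in $\Tcal$.

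I do not anticipate a genuine obstacle here: the corollary is a bookkeeping synthesis of \cref{tc-general} and \cref{cg-stalklocal}, and the only points requiring care are (a) identifying $\clp((G(\PP)^c)^c) = \clp(G(\PP))$ with the single point $\PP$, which is precisely the locality of the stalk tt-category, and (b) checking that under $\NoSC$ the object denoted $E_\PP$ is the one playing the role of $E_\Bcal$ with $\varphi(\Bcal) = \PP$ in \cref{Ep}, so that \cref{Ep}(1) applies in the converse. If one preferred not to route the converse through condition $(i)$ of \cref{cg-stalklocal}, an equally short alternative is to verify condition $(iii)$ of \cref{tc-general} directly: for a Thomason set $V$ the inclusion $\Deft{\coprod_{\PP \in \clp(V^c)} E_\PP} \subseteq \Tcal_V$ follows from \cref{Ep}(3), while the reverse inclusion follows from $\Tcal_V = \Deft{\coprod_{\MM \in \clp(V^c)} \lambda_\MM}$ (condition $(iv)$ of \cref{cg-stalklocal}) together with $\lambda_\MM \in \Tcal_\MM = \Deft{E_\MM}$ and closure of definable $\otimes$-ideals under coproducts.
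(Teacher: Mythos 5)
Your proof is correct, and the forward implication is word-for-word the paper's argument: specialize \cref{tc-general}$(i)\Rightarrow(iii)$ to $V = G(\PP)^c$ and use $\clp(G(\PP)) = \{\PP\}$. For the converse your primary route differs mildly from the paper's: you verify criterion $(i)$ of \cref{cg-stalklocal} directly for an arbitrary definable $\otimes$-ideal $\Dcal$, observing via \cref{Ep}(1) that $E_\PP \in \Dcal$ for every $\PP \in \varphi(\supph(\lambda_\Dcal))$ and hence $\Tcal_\PP = \Deft{E_\PP} \subseteq \Dcal$, whereas the paper instead verifies criterion $(iii)$ of \cref{tc-general} for each Thomason $V$ by showing $\lambda_\PP \in \Deft{E_\PP} \subseteq \Deft{\coprod_{\PP \in \clp(V^c)} E_\PP}$ and invoking \cref{cg-stalklocal}$(iv)$. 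Your route is a touch more direct since it bypasses \cref{tc-general} entirely in this direction and trades on the equivalence $(i)\Leftrightarrow(iv)$ already established in \cref{cg-stalklocal}; the paper's route keeps \cref{tc-general} as the organizing pivot of the whole section. Both are correct, use the same two underlying ingredients ($\HSLP$ and the behavior of homological residue fields recorded in \cref{Ep}), and in fact the alternative you sketch at the end \emph{is} the paper's argument. One cosmetic remark: the paper's proof labels the $\TC \Rightarrow \Deft{E_\PP} = \Tcal_\PP$ implication as ``right to left'' and the converse as ``left to right,'' which is the opposite of the statement's syntax; your orientation matches the statement and is the clearer one.
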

\begin{proof}
  The right to left implication follows directly from \cref{tc-general}, using that $\NoSC$ holds for $\Tcal$. Let us prove the left to right implication by employing the criterion of \cref{tc-general} again. Let $V$ be a Thomason subset of $\Spec{\Tcal^\cpt}$, put $\Dcal = \Deft{\coprod_{\PP \in \clp(V^c)}E_\PP}$, and let us check that $\Dcal = \Tcal_V$. By our assumption, $\lambda_\PP \in \Tcal_\PP = \Deft{E_\PP} \subseteq \Dcal$ for any $\PP \not\in V$. By \cref{cg-stalklocal}$(iv)$, $\Deft{\coprod_{\PP \not\in V}\lambda_\PP} = \Tcal_V$, and so we are done.
\end{proof}
\begin{rmk}
  The author is not aware of any example of a big tt-category which fails $\SLP$. In fact, \cref{cg-stalklocal}$(iii)$ shows that a failure of $\SLP$ would yield a rather catastrophic example of failure of $\TC$: A big tt-category $\Tcal$ in which $\Deft{\coprod_{\PP \in \Spec{\Tcal^\cpt}}\lambda_\PP}$ is not compactly generated.
\end{rmk}
\begin{rmk}\label{lattice}
  The equivalence classes of smashing localizations of a big tt-category $\Tcal$ form a complete lattice with the order given in \cite[Proposition 3.11]{BF11}, see \cite[Corollary 4.12]{Kr00} and also \cite{BKS20}. By taking the corresponding smashing $\otimes$-ideals via \cref{ttftriples}, this lattice is a subposet (in the reverse order) of the complete lattice of Bousfield classes in $\Tcal$ of \cite{IK13}. The join in this lattice corresponds to the intersection of the corresponding definable $\otimes$-ideals via \cref{ttftriples}. It follows straightforwardly that given an object $X \in \Tcal$, the definable $\otimes$-ideal $\Deft{X}$ corresponds via \cref{ttftriples} to the largest smashing localization $- \otimes \lambda$ in the lattice for which $X$ is local, that is, the reflection map $X \to X \otimes \lambda$ is an isomorphism. In this way, the criterion of \cref{tc-stalklocal} has the following interpretation: To check $\TC$, it is enough to show that for each Balmer prime $\PP$, the largest smashing localization making $E_\pp$ local is compactly generated, and thus identifies with the localization $\Tcal \to \Tcal_\pp \subseteq \Tcal$ at $\PP$.
\end{rmk}
\begin{ex}\label{exSH}
    Let $p$ be a prime number and $\Tcal = \SH_p$ the stable homotopy category of $p$-local spectra. Let us explain how \cref{tc-stalklocal} compares to the long-known equivalent formulation of $\TC$ in $\SH_p$ in terms of the Morava $K$-theories. We use the survey paper \cite{Bar20} as our base reference here. First, \cref{tc-stalklocal} applies here, as $\SH_p$ satisfies both $\NoSC$ by \cite[Corollary 5.10]{Bal20} and $\HSLP$ as any compact localization of $\SH_p$ is a local tt-category, this follows from $\Spec{\SH_p^\cpt}$ being homeomorphic to the ordered set $\{0,1,2,\ldots,\infty\}$, \cite[Corollary 9.5]{Bal10}.

    For each $0 \leq n \leq \infty$ interpreted as a point of $\Spec{\SH_p^\cpt}$, the homological residue field object at $n$ is the Morava $K$-theory $K(n)$, see \cite[Lemma 3.5]{BC20}. Here by convention, $K(0)$ and $K(\infty)$ are the Eilenberg-MacLane spectra of $\Qbb$ and $\Zbb/p\Zbb$, respectively. Let $L_n: \Tcal \to \Tcal$ be the Bousfield localization whose kernel is the Bousfield class $\Ker(\coprod_{i=0}^n K(n) \otimes -)$. If $n \neq \infty$ then $L_n$ is a smashing localization by the result of Hopkins and Ravenel so that $L_n \cong (L_n(1) \otimes -)$, see \cite[Theorem 3.3]{Bar20}. Then it is known that $\TC$ in $\SH_p$ is equivalent to all the smashing localizations $L_n$ for $0 \leq n < \infty$ being compactly generated, see \cite[Corollary 4.5]{Bar20}.

    To compare this with \cref{tc-stalklocal}, we first claim that if $n \neq \infty$ then $L_n$ is precisely the largest smashing localization $L = - \otimes \lambda$ for which $K(n)$ is local (see \cref{lattice}). As $K(n)$ is $L_n$-local, $L_n \leq L$ in the lattice of smashing localizations. To see the converse relation, it is enough to check that $K(m)$ is $L$-local for all $m<n$. By \cite[Lemma 4.1]{Bar20}, we have $L(K(m)) = K(m) \otimes \lambda \neq 0$ for any $m<n$. By \cref{Ep}(1), $K(m) \to K(m) \otimes \lambda$ is an isomorphism, as desired. The case $n = \infty$ behaves differently. Indeed, the largest smashing localization for which $K(\infty)$ is local is just the identity functor, this follows from \cite[Proposition 4.4]{Bar20}. On the other hand, there are non-zero objects which are $L_\infty$-acyclic (in terms of the next subsection, this reflects that the homological support $\supph$ does not detect vanishing in $\SH_p$), see \cite[Corollary B.12]{HS99}, and so $L_\infty$ is not smashing.

    It follows that the recently proved failure of $\TC$ in $\SH_p$ of \cite[Theorem A]{BHLS23} can be stated in view of \cref{tc-stalklocal} and \cref{lattice} as follows: For each $1 < n < \infty$, the definable $\otimes$-ideal $\Deft{K(n)}$ is not compactly generated.

    In view of \cref{exBFS}, one can give a similar interpretation in terms of \cref{tc-stalklocal} of the failure of $\TC$ in the non-localized stable homotopy category $\SH$ of spectra, we leave the details up to the reader.
\end{ex}
\begin{ex}
   Let $R$ be a local and zero-dimensional commutative ring, so that $\Spec{\D(R)^\cpt} \cong \Spec{R}$ is a single point set, see \cref{S:four} for notation and references. Assume further that there is a non-zero object $X \in \D(R)$ with $\supph(X) = \emptyset$ and that $\D(R)$ satisfies $\TC$, such examples exist by \cref{ex:trunc} and \cite[Example 5.5]{BDS23}). Then the largest smashing localization for which the unique residue field $k$ is local has to be the identity functor on $\D(R)$ by \cref{tc-stalklocal}, while the Bousfield class $\Ker(k \otimes_R^\mathbf{L} -)$ is non-trivial. This shows that in general, there is no direct analogue to the Hopkins-Ravenel smashing theorem used in \cref{exSH}.
\end{ex}
\subsection{Stalk-locality and support theories} We conclude this section by showing that a weak notion of support vanishing detection implies $\HSLP$. However, as explained in \cref{ex-commalg} below, it is not clear if this argument applies even to the commutative algebra setting of the further sections. For $X \in \Tcal$, we define its \newterm{Na\"{i}ve support} 
$$\SuppN(X) = \{\PP \in \Spec{\Tcal^\cpt} \mid X \otimes \lambda_\PP \neq 0\}.$$
Note that $\SuppN(X)$ is always a specialization closed subset of $\Spec{\Tcal^\cpt}$, a property which makes it very different from the behavior expected of the support theories like $\supph$ or the Balmer-Favi support \cite{BF11} for a general non-compact object $X$. If $\Tcal = \D(R)$ for a commutative ring $R$ then $\SuppN(X) = \Supp(\bigoplus_{n \in \Zbb}H^n(X))$ is just the ordinary support $\Supp$ of the cohomology modules, so that $\Supp(M) = \{\pp \in \Spec R \mid M \otimes_R R_\pp \neq 0\}$ for an $R$-module $M$. On the other hand, $\supph(X) = \{X \in \D(R) \mid \kappa(\pp) \otimes_R^\mathbf{L} X \neq 0\}$ coincides with the usual cohomological support of complexes.

We say that a given support theory $\Supp_*$ (see \cite[Definition 7.1]{BHS23}) in $\Tcal$ \newterm{detects vanishing} if $\Supp_*(X) = \emptyset$ implies $X = 0$ for any $X \in \Tcal$. 
\begin{lemma}\label{stalkloc-supp}
  If $\SuppN$ detects vanishing in $\Tcal$ then 
  $$\Deft{\coprod_{\substack{\MM \in \clp(\Spec{\Tcal^c})}}\lambda_\MM} = \Tcal.$$
  As a consequence, $\HSLP$ holds in $\Tcal$ provided that $\SuppN$ detects vanishing in $\Tcal_V$ for all Thomason subsets $V$ of $\Spec{\Tcal^\cpt}$.
\end{lemma}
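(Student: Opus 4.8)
The plan is to derive the displayed identity from the characterization of $\HSLP$ in \cref{cg-stalklocal}, specifically condition $(iv)$ applied with $V = \emptyset$, and then bootstrap to all Thomason $V$ by passing to the compact localization $\Tcal_V$. So the heart of the matter is the first assertion: assuming $\SuppN$ detects vanishing in $\Tcal$, one must show $\Dcal := \Deft{\coprod_{\MM \in \clp(\Spec{\Tcal^\cpt})}\lambda_\MM} = \Tcal$. Since $\Dcal$ is a definable $\otimes$-ideal, it is in particular a strict colocalizing $\otimes$-coideal by \cref{def-ideal}, so by \cref{loc-coloc} it sits in a semiorthogonal $\otimes$-decomposition $(\Lcal,\Dcal)$ with $\Lcal = \Perp{}\Dcal$, and $\Dcal = \Tcal$ is equivalent to $\Lcal = 0$.

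First I would fix $0 \neq X \in \Lcal$ and aim for a contradiction by showing $\SuppN(X) = \emptyset$, i.e. $X \otimes \lambda_\PP = 0$ for every $\PP \in \Spec{\Tcal^\cpt}$. The key point is that each $\lambda_\MM$ (for $\MM$ a closed point) lies in $\Dcal$, hence $\Hom_\Tcal(X, \Sigma^i \lambda_\MM) = 0$ for all $i$ since $X \in \Perp{}\Dcal$; tensoring, $\Hom_\Tcal(X \otimes y, \Sigma^i \lambda_\MM) \cong \Hom_\Tcal(X, \Sigma^i [y,\lambda_\MM])$ and more to the point $\Hom_\Tcal(\lambda_\MM \otimes X, -) $ already vanishes on $X$. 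Concretely, $X \otimes \lambda_\MM$ is a retract-of-nothing: from $X \in \Perp{}\Dcal$ and $\lambda_\MM \otimes X \in \Dcal$ (as $\Dcal$ is a $\otimes$-ideal) we get that the identity of $\lambda_\MM \otimes X$ factors through a map in $\Hom_\Tcal(X, \lambda_\MM \otimes X)$ composed appropriately — more cleanly: $\lambda_\MM \otimes(1 \to \lambda_\MM)$ is an isomorphism, so the reflection $X \to X \otimes \lambda_\MM$ becomes an isomorphism after $\otimes \lambda_\MM$, and since $X \otimes \lambda_\MM \in \Dcal$ while any map from $X$ into an object of $\Dcal$ is zero, we conclude $X \otimes \lambda_\MM = 0$. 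This kills $\SuppN(X)$ at every closed point. To propagate to an arbitrary prime $\PP$, use that $\clp(\Spec{\Tcal^\cpt})$ is "dense from below": every point of $\Spec{\Tcal^\cpt}$ has a closed point in its closure (this is the fact recalled just before \cref{cg-general}, via \cite[Corollary 2.12]{Bal05}), equivalently every prime $\PP$ is a generalization of some closed $\MM$. Then there is a morphism of right $\otimes$-idempotents $\lambda_\MM \to \lambda_\PP$ — indeed $G(\MM)^c \supseteq G(\PP)^c$ since $\MM \in \overline{\{\PP\}}$ means... wait, the inclusion goes the right way: $\MM \in \overline{\{\PP\}}$ gives $G(\PP) \subseteq G(\MM)$? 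No — $\PP$ generalizes to a closed point means $\PP$ is a specialization of nothing... let me instead say: $\MM$ in the closure of $\PP$ means $\PP \in G(\MM)$, i.e. $\MM \subseteq \PP$ as prime ideals, hence $G(\PP) \subseteq G(\MM)$, hence $G(\MM)^c \subseteq G(\PP)^c$, giving a canonical map $\lambda_\MM \to \lambda_\PP$ of idempotents by \cite[Proposition 3.11]{BF11}, which exhibits $\lambda_\PP$ as $\lambda_\PP \otimes \lambda_\MM$. Therefore $X \otimes \lambda_\PP \cong X \otimes \lambda_\PP \otimes \lambda_\MM = (X \otimes \lambda_\MM) \otimes \lambda_\PP = 0$. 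Hence $\SuppN(X) = \emptyset$, so $X = 0$ by the vanishing-detection hypothesis, so $\Lcal = 0$ and $\Dcal = \Tcal$.

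For the consequence, fix a Thomason subset $V$ and work inside $\Tcal_V$, which is itself a big tt-category with $\Spec{\Tcal_V^\cpt} \cong V^c$, its closed points being exactly $\clp(V^c)$, and with $(\Tcal_V)_\MM = \Tcal_\MM$ for $\MM \in V^c$. Applying the first part inside $\Tcal_V$ (legitimate precisely because $\SuppN$ detects vanishing in $\Tcal_V$, which is the standing hypothesis) yields $\Deft{\coprod_{\MM \in \clp(V^c)}\lambda_\MM} = \Tcal_V$; here the $\Deft{-}$ closure and the idempotents $\lambda_\MM$ may be computed in $\Tcal$ or in $\Tcal_V$ interchangeably, since $\Tcal_V$ is a definable $\otimes$-ideal of $\Tcal$ and the inclusion is monoidal with the inherited structure. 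This is exactly condition $(iv)$ of \cref{cg-stalklocal}, so $\HSLP$ holds in $\Tcal$.

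The step I expect to be the main obstacle is the passage from closed points to all primes — one must make sure the comparison map $\lambda_\MM \to \lambda_\PP$ genuinely exists and genuinely witnesses $\lambda_\PP \otimes \lambda_\MM \cong \lambda_\PP$, which rests on getting the direction of the inclusion $G(\MM)^c \subseteq G(\PP)^c$ right from "$\MM$ lies in the closure of $\PP$" together with the correspondence between Thomason sets and idempotents in \cite[Proposition 3.11]{BF11}; the rest is a routine unwinding of the $\otimes$-idempotent formalism and of \cref{def-ideal}/\cref{loc-coloc}.
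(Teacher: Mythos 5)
Your argument is correct and follows essentially the same route as the paper: set $\Lcal = \Perp{}\Dcal$, observe that $X\in\Lcal$ forces $X\otimes\lambda_\PP=0$ for every prime $\PP$ (you derive this via closed points and the map of idempotents $\lambda_\MM\to\lambda_\PP$ coming from $G(\MM)^c\subseteq G(\PP)^c$, which is exactly what the paper's one-line assertion relies on), conclude $\SuppN(X)=\emptyset$ hence $X=0$, and deduce the consequence from \cref{cg-stalklocal}(iv) applied in $\Tcal_V$. The only difference is that you spell out the idempotent bookkeeping that the paper leaves implicit.
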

\begin{proof}
  Let $\Dcal = \Deft{\coprod_{\substack{\MM \in \clp(\Spec{\Tcal^c})}}\lambda_\MM}$ and put $\Lcal = \Perp{}\Dcal$. Then any $X \in \Lcal$ satisfies $X \otimes \lambda_\PP = 0$ for any $\PP \in \Spec{\Tcal^\cpt}$ so that $\SuppN(X) = \emptyset$. By the assumption, this implies $\Lcal = 0$ and so $\Dcal = \Tcal$. The last claim follows from \cref{cg-stalklocal}$(iv)$.
\end{proof}
\begin{ex}
  If $\Spec{\Tcal^\cpt}$ is weakly noetherian (see \cite[Definition 2.3]{BHS23}) $\Tcal$ satisfies the Local-To-Global Principle of Benson-Iyengar-Krause (see \cite{BIK11} and also \cite{BHS23}) then $\Tcal$ automatically satisfies $\HSLP$. Indeed, the Local-To-Global Principle passes to $\Tcal_V$ for all Thomason sets $V \subseteq \Spec{\Tcal^\cpt}$, this follows from \cite[Corollary 3.14]{BHS23}. Since the Local-To-Global principle implies that the Balmer-Favi support detects vanishing, see \cite[Corollary 3.10]{BHS23}, the rest follows from \cref{stalkloc-supp}. Indeed, it is easy to see from the definition of the two support theories that for any $X \in \Tcal$ we have $\SuppN(X) = \emptyset$ implies that the Balmer-Favi support of $X$ is empty as well.
\end{ex}
\begin{ex}\label{exBFS}
  More generally, if the Balmer-Favi-Sanders $\SuppBFS$ detects vanishing in $\Tcal$ then it satisfies $\HSLP$. W. T. Sanders generalized in \cite{San17} the Balmer-Favi support of \cite{BF11} to the case when $\Spec{\Tcal^\cpt}$ is not necessarily weakly noetherian, see also the recent work of Zou \cite{Zou23}. It follows directly from the definition that if $\SuppBFS$ detects vanishing in $\Tcal$ then $\SuppN$ detects vanishing in $\Tcal_V$ for each Thomason subset $V$ of $\Spec{\Tcal^\cpt}$, and so \cref{stalkloc-supp} applies. 
  
  In particular, $\HSLP$ holds for the stable homotopy category $\SH$ of spectra, see \cite[Example 6.19]{Zou23}. We remark that it is not known whether $\SuppBFS$ detects vanishing for \textit{any} big tt-category $\Tcal$, or even for $\D(X)$ for a general $X$, see \cite[\S 8.1]{San17} and \cite[Remark 6.10]{Zou23}.
\end{ex}

\begin{ex}\label{ex-commalg}
  It is clear that in the derived category of a commutative ring $\D(R)$, $\SuppN$ detects vanishing and so $\D(R) = \Deft{\coprod_{\mm \in \clp(\Spec R)}R_\mm}$. However, the same condition becomes less apparent in compact localizations of $\D(R)$. Indeed, it is straightforward to check that $\SuppN$ detects vanishing in $\D_V := \D(R)_V$ for a Thomason set $V$ if and only if the smashing subcategory $\Lcal_V$ of $\D(R)$ contains any complex $X$ whose cohomology $H^*(X)$ is supported inside $V$. This is known to hold if $V$ is a complement of a single quasi-compact open set \cite[Theorem 6.8]{Rou08} or if $R$ is commutative noetherian \cite{ATJLS10}, but seems to be unknown in general, cf. the discussion in \cite[\S 3.1]{HHZ21}. The issue lies with complexes which are not bounded from the left, as in principle, there could be such $X$ in $\D_V = \Lcal_V\Perp{}$ with $\SuppN(X) = \Supp(H^*(X)) \subseteq V$. Nevertheless, it turns out that $\D(R)$ always satisfies $\HSLP$, as we prove below in \cref{biloc-stalks} by different means. 
\end{ex}

\section{Derived category of a scheme}\label{S:four}
From now on, we specialize to a specific big tt-category. Given a quasi-compact and quasi-separated scheme $X$, let $\D(X)$ denote the derived category of unbounded cochain complexes of $\Ocal_X$-modules with quasi-coherent cohomology. Then $\D(X)$ is a big tt-category with the role of the tensor structure played by the derived tensor product $- \otimes_X^\mathbf{L} -$, see \cite[\S 1.2]{BF11} for the relevant references. In particular, $\D(X)$ is compactly generated with $\D(X)^\cpt$ identified with the full subcategory of perfect complexes, and we have the homeomorphism $\Spec{\D(X)^\cpt} \cong X$ by Thomason's result \cite{Tho97}. 

The locality results discussed in the previous section reduce the study of $\TC$ to the case of an affine scheme $X \cong \Spec{R}$, where $R$ is a commutative or even a local commutative ring. In this case, a subset $V$ of $\Spec R$ is Thomason if and only if $V$ can be written as a union of Zariski closed sets $V(I) = \{\pp \in \Spec R \mid I \subseteq \pp\}$ where $I$ is a finitely generated ideal of $R$. Given a finitely generated ideal $I$, let $K(I) = \bigotimes_{i=1}^n (R \xrightarrow{\cdot x_i} R)$ concentrated in degrees $-n,\ldots,0$ be the Koszul complex defined over some fixed generating set $x_1,\ldots,x_n$ of $I$, then $K(I)$ is a compact object satisfying $\supp(K(I))=V(I)$. Recall that the unit object $R$ generates $\D(R) := \D(\Spec{R})$ as a localizing subcategory. It follows that each thick subcategory of $\D(R)^\cpt$ is automatically a thick $\otimes$-ideal and each localizing subcategory of $\D(R)$ is automatically a localizing $\otimes$-ideal. There is an explicit description of the right $\otimes$-idempotent associated to a Thomason set of the form $V(I)$ with $I = (x_1,\ldots,x_n)$ a finitely generated ideal of $R$. Indeed, $R \to \lambda_{V(I)}$ can be represented by the the mapping cone of the canonical chain map $\bigotimes_{i=1}^n (R \to R[x_i^{-1}]) \to R$, where $\bigotimes_{i=1}^n (R \to R[x_i^{-1}])$ concentrated in degrees $0,1,\ldots,n$ is the \newterm{Čech complex} on $I$, see \cite[\S 5.2]{Hrb20} for details and references.

We will also refresh some notation from the previous section. Given a Thomason subset $V$ of $\Spec R$, we have $\Kcal_V = \{X \in \D(R)^\cpt \mid \supp(X) \subseteq V\} = \thick{\bigcup_{V(I) \subseteq V}K(I)}$, $\Lcal_V = \Loc{\Kcal_V}$, and $\D_V = \Kcal_V\Perp{}$ (the latter replaces the notation $\Tcal_V$ from the previous section). The following recovers and strengthens \cite[Proposition 3.13]{HHZ21} for stable t-structures. 
\begin{theorem}\label{biloc-stalks}
  Let $X$ be a quasi-compact and quasi-separated scheme. Then $\D(X)$ satisfies $\HSLP$.
\end{theorem}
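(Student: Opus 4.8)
The plan is to reduce, via \cref{Zariski-loc} (whose hypotheses are stable under passing to compact localizations), to the affine case $X = \Spec R$ with $R$ a commutative ring, and then to verify criterion $(iv)$ of \cref{cg-stalklocal}: for every Thomason subset $V$ of $\Spec R$ we want $\Deft{\coprod_{\mm \in \clp(V^c)}\lambda_\mm} = \D_V$. The obvious attempt --- using that $\SuppN$ detects vanishing --- breaks down in $\D_V$, as explained in \cref{ex-commalg}, because complexes unbounded to the left need not lie in $\Lcal_V$. So a different route is needed, and the natural one is to write $\lambda_V$ itself as a homotopy colimit of the idempotents $\lambda_\mm$ and conclude by definability.

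First I would record that $V = \bigcup_{V(I)\subseteq V} V(I)$ is a union of Zariski-closed sets $V(I)$ with $I$ finitely generated, and that $V^c$ with the subspace topology is homeomorphic to $\Spec \D_V^\cpt$. For a point $\pp\in V^c$, the stalk idempotent $\lambda_\pp$ (computed in $\D_V$, equivalently in $\D(R)$) corresponds to the localization away from the Thomason set $G(\pp)^c$; over a commutative ring this is simply the localization $X\mapsto X\otimes_R R_\pp$ restricted appropriately, so $\lambda_\pp$ behaves like the structure sheaf of $\Spec R_\pp$. The key geometric input is that $V^c$ is a spectral space in which every point specializes to a point of $\clp(V^c)$ (the paragraph before \cref{cg-general}), and more importantly that the closed-point set, together with the covering by $D(f)\cap V^c$, gives a way to express the unit of $\D_V$. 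Concretely I would aim to show: writing $U = V^c$ as a directed union of its quasi-compact open subsets $W^c$ (for $W\subseteq V$ Thomason with quasi-compact open complement), and covering each such $W^c$ by finitely many basic opens, one can assemble $\lambda_V$ as a directed homotopy colimit of tensor products $\bigotimes \lambda_{W_i}$ via \cref{hocolim}; the real content is then to push this all the way down to the closed points.

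The cleanest implementation, I expect, is the following. By \cref{hocolim}, $1\to\lambda_V$ is a directed homotopy colimit of $1\to\lambda_W$ over Thomason $W\subseteq V$ with quasi-compact open complement. So, using that $\D_V$ has a model and that definable $\otimes$-ideals are closed under directed homotopy colimits (\cref{def-ideal} together with Laking's theorem as invoked there), it suffices to show each such $\lambda_W$ lies in $\Deft{\coprod_{\mm\in\clp(V^c)}\lambda_\mm}$, i.e. to handle the case where $V^c$ is quasi-compact open in $\Spec R$. In that case $V^c$ is a finite union of basic opens $D(f_j)$, and one can induct on the number of basic opens using the Mayer--Vietoris triangle $1\to \lambda_{U_1^c}\oplus\lambda_{U_2^c}\to\lambda_{(U_1\cap U_2)^c}\leadsto$ of \cite[Theorem 5.18]{BF11} exactly as in the proof of \cref{Zariski-loc}, reducing to $V^c = D(f)$ a single basic open. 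But $D(f)\cong \Spec R_f$, and for a commutative ring the equation $\D(R_f) = \Deft{\coprod_{\mm\in\clp(\Spec R_f)}(R_f)_\mm}$ holds because $\SuppN$ detects vanishing in the \emph{unlocalized} derived category $\D(R_f)$ (the easy half of \cref{ex-commalg}); transporting this back along the smashing localization $\D(R)\to\D_{D(f)^c}\simeq\D(R_f)$ and noting that $\clp(D(f))$ consists of maximal ideals of $R_f$, hence of points still maximal in the relevant induced topology, gives $\lambda_{D(f)^c}\in\Deft{\coprod_{\mm}\lambda_\mm}$.

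The main obstacle is the bookkeeping in the last reduction: one must be careful that a point closed in the subspace $D(f)$ need not be closed in all of $\Spec R$, and conversely that $\clp(V^c)$ is the right index set --- the paragraph before \cref{cg-general} guarantees every point of $V^c$ specializes to one in $\clp(V^c)$, which is what makes the $\Deft{-}$ of the closed-point idempotents as large as the $\Deft{-}$ of all the $\lambda_\pp$, but matching this up across the cover requires showing that the $\lambda_\mm$ for $\mm\in\clp(D(f)\cap V^c)$ all lie in $\Deft{\coprod_{\mm'\in\clp(V^c)}\lambda_{\mm'}}$, which follows from \cref{cg-stalklocal}'s discussion (compact generation of $\Dcal\cap\Tcal_\pp$ propagates along specialization, \cite[Proposition 4.4]{BF11}). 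A secondary technical point is ensuring the homotopy-colimit argument of \cref{hocolim} is applied inside $\D_V$ rather than $\D(R)$; since $\D_V$ is itself a big tt-category with a model, this is legitimate, but it should be stated explicitly. Modulo these points, the argument is a synthesis of \cref{hocolim}, the Mayer--Vietoris reduction of \cite{BF11}, and the trivial affine case, with no new hard input required.
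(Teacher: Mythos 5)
Your reduction via \cref{hocolim} has a genuine gap. You claim that since $1\to\lambda_V$ is a directed homotopy colimit of the idempotents $1\to\lambda_W$ over Thomason $W\subseteq V$ with quasi-compact open complement, and since definable $\otimes$-ideals are closed under directed homotopy colimits, ``it suffices to show each such $\lambda_W$ lies in $\Deft{\coprod_{\mm\in\clp(V^c)}\lambda_\mm}$.'' But this is impossible: $\Deft{\coprod_{\mm\in\clp(V^c)}\lambda_\mm}\subseteq\D_V$ (each $\lambda_\mm=R_\mm\in\D_V$ for $\mm\in V^c$), whereas for any $W\subsetneq V$ one has $\lambda_W\notin\D_V$. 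Indeed, $\D_V$ is a definable $\otimes$-ideal of $\D_W$, and a $\otimes$-ideal containing the unit $\lambda_W$ of $\D_W$ must equal $\D_W$; so $\lambda_W\in\D_V$ forces $W=V$. (Concretely, for $R=\Zbb$, $V$ all closed points, $W=V(2)$: $\lambda_W=\Zbb[1/2]$ is not annihilated by $\Hom(\Zbb/p,-)$ for $p\neq2$, so $\lambda_W\notin\D_V$.) The closure of a definable $\otimes$-ideal under directed homotopy colimits applies to diagrams \emph{inside} that ideal, so the argument does not launch. The ``bookkeeping'' you flag in the last paragraph (closed points of $D(f)$ vs.\ of $\Spec R$) is a secondary issue; the primary one is this failed reduction.

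In fact \cref{hocolim} cannot carry the weight you want it to: the paper invokes it for a much more modest purpose, namely to observe that each $\lambda_{V(I)}$ has cohomology concentrated in degrees $\geq0$ (via the Čech complex model), hence the directed homotopy colimit $\lambda_V$ also satisfies $H^i(\lambda_V)=0$ for $i<0$, i.e.\ $\lambda_V\in\D^+(R)$. The real content of the paper's proof is then the genuinely module-theoretic statement that \emph{every} object of $\D_V\cap\D^+(R)$ lies in $\Def{\bigoplus_{\pp\notin V}R_\pp}$: for an injective $R$-module $W$ with $W[0]\in\D_V$, the canonical map $W\to\prod_{\pp\notin V}W_\pp$ is shown to be a split monomorphism (using Hochster's lemma \cite[Lemma 3.2]{Hone} that a residue field $R/\qq$ can be built from any cyclic module by submodules and direct limits, to detect a prime of $V$ inside the kernel); one then bootstraps to all of $\D_V\cap\D^+(R)$ by building an injective resolution whose terms lie in the definable $\otimes$-ideal and passing to a Milnor limit of brutal truncations, as in \cite[Lemma 3.4]{Hrb20}. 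Your proposal contains no substitute for this step. Reducing to $V^c$ quasi-compact open and quoting that $\SuppN$ detects vanishing in $\D(R_f)$ is insufficient precisely because, as noted in \cref{ex-commalg}, one cannot deduce from this that $\SuppN$ detects vanishing in $\D_V$ for general Thomason $V$; this is where the boundedness-below of $\lambda_V$ must be exploited via the injective-resolution argument rather than any formal hocolim reduction.
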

\begin{proof}
  By \cref{Zariski-loc}, we can reduce to the case of an affine scheme $X = \Spec R$. In view of \cref{cg-stalklocal}$(iv)$, given a Thomason subset $V$ of $\Spec R$ we need to check that $\Def{\bigoplus_{\pp \not\in V}R_\pp} = \D_V$. Since $R_\pp \in \D_V$ for each $\pp \not\in V$, we clearly have $\Def{\bigoplus_{\pp \not\in V}R_\pp} \subseteq \D_V$. To prove the other inclusion, it suffices to show that $\lambda_V\in \Def{\bigoplus_{\pp \not\in V}R_\pp}$. By \cref{hocolim}, $\lambda_V$ is a directed homotopy colimit of $\lambda_{V(I)}$ for Zariski closed sets $V(I) \subseteq V$ with $I$ a finitely generated ideal. Since each $\lambda_{V(I)}$ has vanishing cohomology in negative degrees by the discussion above, we also have $H^i(\lambda_V) = 0$ for all $i<0$. We shall show that any object from $\D_V \cap \D^+(R)$ belongs to $\Def{\bigoplus_{\pp \not\in V}R_\pp}$, where $\D^+(R) = \{X \in \D(R) \mid H^i(X) = 0 ~\forall i \ll 0\}$ is the subcategory of objects which are cohomologically bounded from below.

  \noindent
  First, we handle the special case of a stalk complex $W[0] \in \D_V$ of an injective $R$-module $W$. Since $\D_V = \Kcal_V\Perp{}$, we have $\Hom_R(R/I,W)=0$ for any finitely generated ideal $I$ with $V(I) \subseteq V$. We claim that the canonical map $W \to \prod_{\pp \not\in V}W_\pp$ is injective, where $W_\pp = W \otimes_R R_\pp$. Indeed, let $K$ be the kernel of this map, then $K_\pp = 0$ for all $\pp \not\in V$. Assume that $K$ is non-zero and let $R/J$ be a non-zero cyclic submodule of $K$. By \cite[Lemma 3.2]{Hone}, there is a prime $\qq$ such that $R/\qq$ can be constructed from $R/J$ by taking submodules and direct limits. Then $(R/\qq)_\pp = 0$ for all $\pp \not\in V$, showing that $\qq \in V$. Since $V$ is Thomason, there is a finitely generated ideal $I$ such that $V(I) \subseteq V$ and $I \subseteq \qq$. This is in contradiction with $\Hom_R(R/I,W) = 0$. Since $W$ is injective, the monic map $W \to \prod_{\pp \not\in V}W_\pp$ splits. This shows that $W[0] \in \Def{\bigoplus_{\pp \not\in V}R_\pp}$. 
  
  \noindent
  Now we follow the argument of \cite[Lemma 3.4]{Hrb20}. Let $X \in \D_V \cap \D^+(R)$. Then the stalk of the injective envelope $E(H^{\inf(X)}(X))$ of the left-most non-vanishing cohomology of $X$ belongs to $\D_V$ by \cite[Lemma 3.3]{Hrb20}, and therefore also to $\Def{\bigoplus_{\pp \not\in V}R_\pp}$ by the previous paragraph. As in \cite[Lemma 3.4]{Hrb20}, we can iteratively construct an injective resolution of $X$ using injective $R$-modules belonging to $\Def{\bigoplus_{\pp \not\in V}R_\pp}$, and so a standard argument by a Milnor limit of brutal truncations yields $X \in \Def{\bigoplus_{\pp \not\in V}R_\pp}$. 
\end{proof}
Given a point $x \in X$, let $\Ocal_x$ be the stalk local ring and $k(x)$ be the residue field sheaf at $x$. Then \cref{tc-stalklocal} yields the following criterion for the Telescope Conjecture in $\D(X)$.
\begin{theorem}\label{tc-scheme}
  Let $X$ be a quasi-compact and quasi-separated scheme. Then the following are equivalent:
  \begin{enumerate}
    \item[(i)] $\TC$ holds for $\D_{\qc}(X)$,
    \item[(ii)] for each $x \in X$ we have $\Deft{k(x)} = \D(\Ocal_x)$.
  \end{enumerate}
\end{theorem}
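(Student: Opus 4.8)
The plan is to derive \cref{tc-scheme} from \cref{tc-stalklocal} applied to the big tt-category $\D(X)$. First I would reduce to the affine case $X = \Spec R$: condition (i) is Zariski-local by the affine-locality of $\TC$ due to Balmer--Favi \cite{BF11} (see also \cref{Zariski-loc}), while condition (ii) is Zariski-local essentially by inspection, since for a point $x$ lying in an affine open $U \subseteq X$ neither the stalk local ring $\Ocal_x$ nor the residue field $k(x)$ changes when $X$ is replaced by $U$. Thus it suffices to prove the equivalence when $X = \Spec R$ with $R$ commutative.

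Next I would verify the hypotheses of \cref{tc-stalklocal} for $\Tcal = \D(R)$. The Hereditary Stalk-Locality Principle $\HSLP$ is exactly the content of \cref{biloc-stalks}. The ``Nerves of Steel'' Conjecture $\NoSC$ holds for $\D(R)$ of an arbitrary commutative ring $R$: by Balmer \cite{Bal20} the comparison map $\varphi$ identifies the homological spectrum $\Spech{\D(R)^\cpt}$ with $\Spec R \cong \Spec{\D(R)^\cpt}$ (Thomason \cite{Tho97}). Hence \cref{tc-stalklocal} applies and gives: $\TC$ holds for $\D(R)$ if and only if for every $\pp \in \Spec R$ the definable $\otimes$-ideal $\Deft{E_\pp}$ equals the stalk tt-category $\D(R)_\pp$.

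It then remains to translate the two objects $E_\pp$ and $\D(R)_\pp$ into their geometric counterparts. The homological residue field object of $\D(R)$ at $\pp$ is the residue field $k(x)$, that is, $R_\pp/\pp R_\pp$ regarded as a complex concentrated in degree $0$ (Balmer \cite{Bal20b}, see also \cite{BC20}), where $x$ is the point corresponding to $\pp$. The stalk tt-category $\D(R)_{G(\pp)^c} = \D(R)_\pp$ is the finite (smashing) localization of $\D(R)$ inverting the ring map $R \to R_\pp$, and is therefore equivalent, as a big tt-category, to $\D(R_\pp) = \D(\Ocal_x)$; this equivalence carries the tensor unit $\lambda_\pp$ to $R_\pp$ and the object $E_\pp$ to the residue field $k(x)$ of the local ring $R_\pp$, that is, to the homological residue field of $\D(\Ocal_x)$ at its closed point (compare \cite{HHZ21} for the non-affine incarnation of this identification). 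Because $E_\pp$ already lies in the definable $\otimes$-ideal $\D(R)_\pp$, the closure $\Deft{E_\pp}$ formed in $\D(R)$ coincides with the closure formed inside the big tt-category $\D(R)_\pp$ --- the definable $\otimes$-ideals of $\D(R)$ contained in $\D(R)_\pp$ being precisely the definable $\otimes$-ideals of $\D(R)_\pp$ --- so under the above equivalence the condition $\Deft{E_\pp} = \D(R)_\pp$ becomes exactly $\Deft{k(x)} = \D(\Ocal_x)$. Letting $\pp$, equivalently $x$, range over $\Spec R$ yields the equivalence (i) $\iff$ (ii). I do not expect a genuine obstacle at this stage: the substantive input --- establishing $\HSLP$ for $\D(X)$ --- has already been dispatched in \cref{biloc-stalks}, and what remains is the bookkeeping of the identifications $E_\pp \cong k(x)$ and $\D(R)_\pp \simeq \D(\Ocal_x)$, the compatibility of the $\Deft{-}$ closure operator across the smashing localization $\D(R) \to \D(R)_\pp$, and --- for the reduction to the affine case --- the Zariski-locality of both conditions.
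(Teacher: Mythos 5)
Your proposal is correct and follows essentially the same route as the paper: both hinge on applying \cref{tc-stalklocal}, with \cref{biloc-stalks} supplying $\HSLP$ for $\D(X)$, Balmer's result supplying $\NoSC$, and the identification of the homological residue field object $E_\pp$ with $k(x)$ (via \cite{BC20}) and of the stalk category $\D(X)_\pp$ with $\D(\Ocal_x)$. The only differences are organizational: you reduce to the affine case up front (the paper handles this implicitly inside \cref{biloc-stalks} and in its citation of $\NoSC$ for $\D(R)$), and you spell out the compatibility of the $\Deft{-}$ closure operator across the smashing localization $\D(R) \to \D(R)_\pp$, a point the paper's terse proof leaves to the reader; both are sound.
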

\begin{proof}
  This follows from \cref{biloc-stalks} and \cref{tc-stalklocal}. Indeed, $\NoSC$ holds for $\D(R)$ by \cite[Corollary 5.11]{Bal20}, and for each $x \in X$ the corresponding homological residue field object is precisely the usual residue field $k(x)$ viewed as a stalk complex in degree zero, see \cite[Corollary 3.3]{BC20}.
\end{proof}
\section{Telescope conjecture vs. ring epimorphisms}
In the derived category of a (even not necessarily commutative) ring $R$, the smashing localizations of $\D(R)$ are represented by certain dg-ring extensions of $R$ see \cite{NS09}, \cite{Pau09}, \cite[Proposition 2.5]{BS17}. In case of rings with a suitable homological dimension restriction, these can be replaced by epimorphisms of ordinary rings and the Telescope Conjecture can be reformulated in terms of these ring extensions, see \cite{BS17} and \cite{KS10}. In this section, we study those definable $\otimes$-ideals in $\D(R)$ for a commutative ring $R$ which arise in such a way from ordinary ring extensions. We say that a definable $\otimes$-ideal $\Dcal$ of $\D(R)$ is \newterm{closed under cohomology} if $X \in \Dcal$ implies $H^*(X)=\bigoplus_{n \in \Zbb}H^n(X) \in \Dcal$. As a consequence, $X \in \Dcal$ if and only if $H^*(X) \in \Dcal$. Here, the implication $H^*(X) \in \Dcal \implies X \in \Dcal$ holds for any definable $\otimes$-ideal $\Dcal$. Indeed, one can reconstruct any cohomologically bounded complex $X \in \D(R)$ from its cohomology modules using shifts and extensions, and this extends to unbounded complexes using the Milnor limits and colimits of soft truncations; note that $\Dcal$ is closed under all of these operations.

Recall that a ring epimorphism $f: R \to S$ is \newterm{pseudoflat} if $\Tor_1^R(S,S) = 0$ and it is \newterm{flat} if $S$ is flat as an $R$-module, that is, if $\Tor_1^R(S,M) = 0$ for all $R$-modules $M$.
\begin{lemma}\label{epi-biloc}
  For any (even not necessarily commutative) ring $R$, the following collections are in bijection:
  \begin{enumerate}
    \item[(i)] extension-closed bireflective subcategories $\Xcal$ of $\Mod R$,
    \item[(ii)] pseudoflat ring epimorphisms $f: R \to S$ up to equivalence,  
    \item[(iii)] definable $\otimes$-ideals $\Dcal$ of $\D(R)$ closed under cohomology. 
  \end{enumerate}
  The bijection $(i) \leftrightarrow (iii)$ is given as $\Xcal \mapsto \D_\Xcal = \{X \in \D(R) \mid H^*(X) \in \Xcal\}$. The bijection $(i) \leftrightarrow (ii)$ assigns to $f: R \to S$ the image $\Xcal_f$ of the forgetful fully faithful functor $\Mod S \to \Mod R$.
\end{lemma}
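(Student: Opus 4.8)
The strategy is to prove the two bijections $(i)\leftrightarrow(ii)$ and $(i)\leftrightarrow(iii)$ with $(i)$ as the common hub. \emph{The bijection $(i)\leftrightarrow(ii)$} is classical, and I would cite it: the bireflective full subcategories of $\Mod R$ are exactly the essential images $\Xcal_f$ of restriction of scalars along a ring epimorphism $f\colon R\to S$, with $f$ and $f'$ giving the same subcategory precisely when they are equivalent (this is the input behind the reformulations of $\TC$ used in \cite{BS17} and \cite{KS10}). Under this correspondence, $\Xcal_f$ is closed under extensions in $\Mod R$ if and only if $f$ is pseudoflat: given a short exact sequence $0\to A\to B\to C\to 0$ of $S$-modules, applying $S\otimes_R-$ and using $\Tor_1^R(S,S)=0$ shows the unit $B\to S\otimes_RB$ is an isomorphism, so $B$ is an $S$-module; conversely, extension-closure applied to $0\to\ker f\to R\to S\to 0$ (after tensoring) forces $\Tor_1^R(S,S)=0$. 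I would quote the precise references rather than reproduce these arguments.

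\emph{The bijection $(i)\leftrightarrow(iii)$, easy direction.} I claim $\Xcal\mapsto\D_\Xcal$ and $\Dcal\mapsto\Dcal\cap\Mod R$ (identifying $\Mod R$ with the stalk complexes in degree $0$) are mutually inverse. If $\Dcal$ is a definable $\otimes$-ideal closed under cohomology, then $\Xcal:=\Dcal\cap\Mod R$ is closed under kernels and cokernels (the mapping cone of a map $A\to B$ of modules has cohomology the kernel in degree $-1$ and the cokernel in degree $0$, and $\Dcal$ is thick and closed under cohomology), under extensions (a short exact sequence of modules is a triangle in $\D(R)$), under products (a definable $\otimes$-ideal is product-closed and products are exact in $\Mod R$) and under coproducts (since $\Dcal$ is localizing); hence $\Xcal$ is extension-closed bireflective. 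Moreover closure under cohomology gives $\Dcal=\D_\Xcal$, because $X\in\Dcal$ iff $\bigoplus_nH^n(X)\in\Dcal$ iff every $H^n(X)\in\Xcal$ (using thickness, coproduct-closure, and that $X$ is reconstructed from its cohomology modules by triangles and Milnor limits/colimits of soft truncations, all of which $\Dcal$ is closed under).

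\emph{The bijection $(i)\leftrightarrow(iii)$, hard direction.} Conversely, for an extension-closed bireflective $\Xcal$ I must show $\D_\Xcal$ is a definable $\otimes$-ideal closed under cohomology. Closure under cohomology is immediate. Thickness follows from the long exact cohomology sequence of a triangle $A\to B\to C\to\Sigma A$: each $H^n(C)$ is an extension of $\ker(H^{n+1}(A)\to H^{n+1}(B))$ by a cokernel built from $H^n(A)\to H^n(B)$, and these lie in $\Xcal$ by its closure under kernels, cokernels and extensions. That $\D_\Xcal$ is a localizing $\otimes$-ideal follows since $H^n$ commutes with coproducts ($\Xcal$ being coproduct-closed) and since $\{Y\mid X\otimes_R^{\mathbf{L}}Y\in\D_\Xcal\}$ is a localizing subcategory containing $R$, hence all of $\D(R)$. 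For definability I would invoke \cref{def-ideal}$(iv)$: since $\D(R)$ has a model, it suffices that $\D_\Xcal$ be closed under products (clear, as $H^n$ commutes with products in $\Mod R$) and under pure monomorphisms. Given a pure monomorphism $X\to Y$ with $Y\in\D_\Xcal$, complete to a triangle $X\to Y\to Z\xrightarrow{\delta}\Sigma X$; purity means $\yo\delta=0$, so $\delta$ is a phantom map, the cohomology sequence breaks into short exact sequences $0\to H^n(X)\to H^n(Y)\to H^n(Z)\to 0$, and these are moreover \emph{pure}-exact sequences of $R$-modules: the sequence $0\to\yo X\to\yo Y\to\yo Z\to 0$ in $\Mod{\D(R)^\cpt}$ is pure-exact because the phantom property makes $\Hom(C,-)$ exact on it for compact $C$, and evaluation at a shift of $R$ is an exact, purity-preserving functor. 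Thus $H^n(X)$ is a pure submodule of $H^n(Y)\in\Xcal=\Mod S$; invoking that $\Mod S$ is closed under pure submodules in $\Mod R$ for a pseudoflat ring epimorphism, we get $H^n(X)\in\Xcal$, i.e. $X\in\D_\Xcal$. Finally $\Dcal\cap\Mod R=\Xcal$ when $\Dcal=\D_\Xcal$ is clear, closing the bijection.

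\emph{Main obstacle.} The delicate step is the definability of $\D_\Xcal$, concretely the input that $\Mod S$ is closed under pure submodules in $\Mod R$ (equivalently, is a definable subcategory of $\Mod R$) for a pseudoflat ring epimorphism $R\to S$ — trivial when $f$ is surjective, but the interesting examples of Sections 4--5 are non-surjective. If this fact is not available in the needed generality, I would instead establish definability of $\D_\Xcal$ through \cref{def-ideal}$(iii)$, by exhibiting the smashing $\otimes$-ideal $\Perp{}\D_\Xcal$ together with the localization $\D(R)\to\D_\Xcal$ attached to $R\to S$: here one uses that the $R$-injective envelope of an $S$-module is again an $S$-module (so that bounded-below objects of $\D_\Xcal$ admit $S$-injective resolutions), and that pseudoflatness is precisely what makes this localization smashing, along the lines of \cite{BS17} and \cite{KS10}.
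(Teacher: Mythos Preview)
Your proposal is correct and follows essentially the same route as the paper: cite $(i)\leftrightarrow(ii)$ as classical, then show that $\Xcal\mapsto\D_\Xcal$ and $\Dcal\mapsto\Dcal\cap\Mod R$ are mutually inverse between $(i)$ and $(iii)$, using the long exact cohomology sequence for thickness and closure under products and pure monomorphisms for definability. The step you single out as the ``main obstacle'' --- that $\Xcal=\Mod S$ is closed under pure submodules in $\Mod R$ for a pseudoflat epimorphism --- is precisely what the paper's proof asserts without further comment (the remark following the lemma points to Prest for this), so your caution is well placed but the fact is standard and no fallback route is needed.
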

\begin{proof}
  The bijection between $(i)$ and $(ii)$ is well-known, see e.g. \cite[Theorem 2.1]{AMSVT} and references there given. Next, the assignment $(i) \to (iii)$ is well-defined and injective. Indeed, $\Xcal$, being a bireflective subcategory, is closed under all kernels and cokernels. Then $\D_\Xcal$ is a thick subcategory of $\D(R)$ by a straightforward argument with the long exact sequence on cohomology. Furthermore, $\D_\Xcal$ is definable as it is closed under both products and pure monomorphisms, which follows from $\Xcal$ being closed under products and pure monomorphisms in $\Mod R$. To show that the assignment is surjective, let $\Dcal$ be a definable $\otimes$-ideal of $\D(R)$ closed under cohomology, and let $\Xcal = \{H^0(X) \mid X \in \Dcal\}$. Since $\Dcal$ is closed under cohomology, we have $\Xcal[n] \subseteq \Dcal$ for all $n \in \Zbb$. Because $\Dcal$ is closed under extensions and both Milnor limits and colimits, a standard argument using truncations shows that $\Dcal = \{X \in \D(R) \mid H^*(X) \in \Xcal\}$. Then $\Xcal = \Dcal \cap \Mod R[0]$ is closed under kernels, cokernels, products, coproducts, and extensions in $\Mod R$, and thus it is an extension-closed bireflective subcategory.
\end{proof}

\begin{rmk}\label{rem-wide}
  A subcategory $\Xcal$ of $\Mod R$ is extension-closed bireflective if and only if it is a wide subcategory (i.e., closed under extensions, kernels, and cokernels) which is closed under products and coproducts, see \cite[Theorem 1.2]{GdP}. This is further equivalent to $\Xcal$ being a definable wide subcategory, in terms of the usual pure exact structure on $\Mod R$. Indeed, if a wide subcategory is definable then it is closed under both products and coproducts, while an extension-closed bireflective subcategory $\Xcal$ is definable because it can be written as an intersection $\Xcal = \D_\Xcal \cap \Mod R[0]$ of two definable subcategories of $\D(R)$ by \cref{epi-biloc} (cf. \cite[\S 17.3]{Prest}).
  
  In this way, we can see how extension-closed bireflective subcategories provide the suitable restriction of the notion of definable thick subcategories (=definable $\otimes$-ideals in this case) from the triangulated category $\D(R)$ to the abelian category $\Mod R$.
\end{rmk}

In view of the lemma, we formulate the following restricted version of $\TC$. 

\begin{dfn}
  We say that $\D(R)$ satisfies the \newterm{Restricted Telescope Conjecture $\RTC$} provided that each definable $\otimes$-ideal $\Dcal$ of $\D(R)$ which is closed under cohomology is compactly generated.
\end{dfn}
\begin{lemma}\label{flat-injenv}
  Let $R$ be a commutative ring and $f: R \to S$ be a pseudoflat ring epimorphism. The following are equivalent:
  \begin{enumerate}
    \item[(i)] $f$ is a flat ring epimorphism,
    \item[(ii)]  $\Mod S$ is closed under injective envelopes as a subcategory of $\Mod R$.
  \end{enumerate}
\end{lemma}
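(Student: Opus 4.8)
The plan is to treat this as a purely module-theoretic statement about the fully faithful forgetful functor $\Mod S\hookrightarrow\Mod R$ and to prove the two implications separately; essentially only flatness (together with the ring-epimorphism identity $S\otimes_R S\cong S$) and the basic fact that $\Hom_R(S,N)\cong N$ for every $S$-module $N$ will be used.

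For $(ii)\Rightarrow(i)$ I would exhibit a single concrete $S$-module whose $R$-injectivity already forces $S$ to be flat, namely the character module $S^{+}:=\Hom_{\Zbb}(S,\Qbb/\Zbb)$. First note that $S^{+}$ is injective in $\Mod S$, since $\Hom_{S}(-,S^{+})\cong\Hom_{\Zbb}(-,\Qbb/\Zbb)$ is exact. By $(ii)$ the $R$-module injective envelope $E_{R}(S^{+})$ again carries an $S$-module structure. Since every $S$-submodule of $E_{R}(S^{+})$ is in particular an $R$-submodule, essentiality of the inclusion $S^{+}\subseteq E_{R}(S^{+})$ over $R$ transfers to essentiality over $S$; as $S^{+}$ is $S$-injective it admits no proper essential extension over $S$, whence $E_{R}(S^{+})=S^{+}$ and $S^{+}$ is $R$-injective. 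By Lambek's criterion (a module is flat if and only if its character module is injective), $S$ is flat over $R$, i.e. $f$ is a flat ring epimorphism.

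For $(i)\Rightarrow(ii)$, assuming $S$ is flat over $R$, I would realise $\Mod S$ as sitting inside the torsion-free class of a hereditary torsion theory on $\Mod R$. Put $\Ucal=\{N\in\Mod R\mid S\otimes_{R}N=0\}$; flatness makes $\Ucal$ closed under submodules, so it is a hereditary torsion class with radical $t(-)$. Using flatness and $S\otimes_{R}S\cong S$ one checks that for every $N$ the reflection unit $\eta_{N}\colon N\to S\otimes_{R}N$ (which does represent the reflection of $N$ into $\Mod S$, since $\Hom_{R}(S,N)\cong N$ for $S$-modules $N$) has $\Ker\eta_{N}=t(N)$ and $\Coker\eta_{N}\in\Ucal$; hence $\Mod S=\{X\mid\eta_{X}\text{ an isomorphism}\}$ lies inside the torsion-free class $\{N\mid t(N)=0\}$. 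Now let $M\in\Mod S$ and $E=E_{R}(M)$. The submodule $t(E)\cap M$ is torsion, being a submodule of the torsion module $t(E)$ and heredity applies, and it is contained in $M$, so it lies in $t(M)=0$; since $M$ is essential in $E$ this forces $t(E)=0$, i.e. $\eta_{E}\colon E\hookrightarrow S\otimes_{R}E$ is a monomorphism with cokernel $C\in\Ucal$. As $E$ is $R$-injective the sequence $0\to E\to S\otimes_{R}E\to C\to 0$ splits, so $C$ is a direct summand of the torsion-free module $S\otimes_{R}E\in\Mod S$; a torsion direct summand of a torsion-free module is zero, hence $\eta_{E}$ is an isomorphism and $E\in\Mod S$. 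Thus $\Mod S$ is closed under injective envelopes.

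The only genuinely delicate points I anticipate are the transfer of essentiality between $\Mod R$ and $\Mod S$ in $(ii)\Rightarrow(i)$ — which goes through precisely because $S$-submodules are automatically $R$-submodules — and the identification of $\Ker\eta_{N}$ with the torsion radical and of $\Coker\eta_{N}$ with a torsion module in $(i)\Rightarrow(ii)$, where flatness and the identity $S\otimes_{R}S\cong S$ do the real work (the cokernel computation in particular requires applying the exact functor $S\otimes_{R}-$ and tracking the identifications carefully); everything else is routine bookkeeping.
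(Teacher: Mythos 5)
Your proof is correct, and it takes a genuinely different and more hands-on route than the paper. The paper's proof is a one-liner at the categorical level: flatness of $f$ is by definition exactness of the left adjoint $-\otimes_R S\colon \Mod R\to \Mod S$, which by the standard adjunction argument (using an injective cogenerator of $\Mod S$ and the full faithfulness of the forgetful functor) is equivalent to the right adjoint $\Mod S\hookrightarrow\Mod R$ preserving injectives, and the latter is then identified with closure under injective envelopes. You instead prove each direction concretely: $(ii)\Rightarrow(i)$ by producing the single test object $S^+=\Hom_{\Zbb}(S,\Qbb/\Zbb)$, transferring essentiality from $R$-modules to $S$-modules (which works in that direction because $S$-submodules are $R$-submodules), and invoking Lambek's criterion; and $(i)\Rightarrow(ii)$ by unpacking the Gabriel/Goldman hereditary torsion theory attached to a flat ring epimorphism, showing the injective envelope of a torsion-free module is torsion-free, and using a splitting argument to kill the torsion cokernel of the unit map. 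Both strategies are sound; the paper's buys brevity by outsourcing to standard adjoint-functor facts, while yours has the advantage of being self-contained and making the role of the torsion theory explicit (which is exactly the mechanism underlying the ``clearly equivalent'' step the paper elides). One small observation: neither your proof nor the paper's actually uses the pseudoflatness hypothesis, so the lemma holds for arbitrary ring epimorphisms; pseudoflatness is included only because it is the ambient hypothesis of the surrounding \cref{flat-tc}.
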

\begin{proof}
  The condition $(i)$ is equivalent to $- \otimes_R S: \Mod R \to \Mod S$ being exact, which is by a standard argument equivalent to its right adjoint, the forgetful functor $\Mod S \to \Mod R$, preserving injective objects. The latter condition is clearly equivalent to $(ii)$.
\end{proof}
\begin{theorem}\label{flat-tc}
  Let $R$ be a commutative ring and $f: R \to S$ a pseudoflat epimorphism. The following are equivalent:
  \begin{enumerate}
    \item[(i)] $f: R \to S$ is flat,
    \item[(ii)] the corresponding definable $\otimes$-ideal $\D_{\Xcal_f}$ is compactly generated. 
  \end{enumerate}

  In particular, $\D(R)$ satisfies $\RTC$ if and only if every pseudoflat epimorphism $R \to S$ is flat. 
\end{theorem}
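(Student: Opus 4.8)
The plan is to run both implications through the dictionary of \cref{epi-biloc}, to detect compact generation stalk-locally using that $\D(R)$ satisfies $\HSLP$ (\cref{biloc-stalks}), and to read off flatness from the closure property of \cref{flat-injenv}. Write $\Dcal := \D_{\Xcal_f}$, so that $\Dcal$ is a definable $\otimes$-ideal of $\D(R)$ closed under cohomology with $\Xcal_f = \Dcal \cap \Mod R[0]$ by \cref{epi-biloc}. The computation underlying both directions is the following: since $\D(R)$ satisfies $\NoSC$ and the homological residue field object at a prime $\pp$ is the ordinary residue field $\kappa(\pp)$ placed in degree $0$ (recall this from the proof of \cref{tc-scheme}), \cref{Ep}(1) gives
\[
  \supph(\lambda_\Dcal) = \{\pp \in \Spec{R} \mid \kappa(\pp) \in \Xcal_f\} = \{\pp \in \Spec{R} \mid \kappa(\pp) \otimes_R S \neq 0\},
\]
the last equality because $\kappa(\pp) \otimes_R S$ is a ring epimorphic image of the field $\kappa(\pp)$, hence is either $0$ or $\kappa(\pp)$. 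Thus $\supph(\lambda_\Dcal)$ is precisely the image of the map $\Spec{S} \to \Spec{R}$.

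For $(ii) \Rightarrow (i)$ I would proceed as follows. If $\Dcal$ is compactly generated then $\Dcal = \D_V$ for a Thomason subset $V$ of $\Spec{R}$ by \cref{Balm-cg}, and it suffices to show that $\Xcal_f = \D_V \cap \Mod R[0]$ is closed under injective envelopes, for then $f$ is flat by \cref{flat-injenv}. The key remark is that an injective $R$-module $E$, viewed in degree $0$, satisfies $\Hom_{\D(R)}(Y, \Sigma^j E) \cong \Hom_R(H^{-j}(Y), E)$ for all $Y \in \D(R)$; hence $E[0] \in \D_V = \Kcal_V\Perp{}$ if and only if $\Hom_R(H^j(K(I)), E) = 0$ for all $j \in \Zbb$ and all finitely generated ideals $I$ with $V(I) \subseteq V$ (since these $K(I)$ generate $\Kcal_V$ as a thick subcategory), and, as the $H^j(K(I))$ are finitely generated $R/I$-modules and $\Hom_R(-,E)$ is exact, this reduces to the single condition $\Hom_R(R/I, E) = 0$ for all such $I$. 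Now take $M \in \Xcal_f$ and $E := E(M)$. A nonzero map $R/I \to E$ has nonzero image, which, by essentiality of $M$ in $E$, meets $M$ in a nonzero submodule annihilated by $I$, producing a nonzero map $R/I \to M$; but $\Hom_R(R/I, M) = H^0(\Hom_R(K(I), M)) = \Hom_{\D(R)}(K(I), M[0]) = 0$, since $M[0] \in \D_V = \Kcal_V\Perp{}$ and $K(I) \in \Kcal_V$. This contradiction forces $\Hom_R(R/I, E) = 0$, so $E(M) \in \Xcal_f$.

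For $(i) \Rightarrow (ii)$, assume $f$ is flat. Flat ring homomorphisms satisfy going-down, so the image of $\Spec{S} \to \Spec{R}$ is stable under generization, and hence so is $\supph(\lambda_\Dcal)$ by the displayed computation. Since $\D(R)$ satisfies $\HSLP$ (\cref{biloc-stalks}), \cref{cg-stalklocal}(i) reduces the compact generation of $\Dcal$ to checking, for each $\pp \in \supph(\lambda_\Dcal)$, that $\lambda_\pp \in \Dcal$; as $\lambda_\pp = R_\pp$ in $\D(R)$ and $\Dcal$ is closed under cohomology, this amounts to the canonical map $R_\pp \to S_\pp := S \otimes_R R_\pp$ being an isomorphism. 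For such a $\pp$, the base change $R_\pp \to S_\pp$ is again a flat ring epimorphism, and since $\pp$ lies in the image of $\Spec{S}$ there is a prime of $S$ over $\pp$ surviving in $S_\pp$, so the closed point of $\Spec{R_\pp}$ lies in the image of $\Spec{S_\pp} \to \Spec{R_\pp}$. Because $R_\pp$ is local, every prime generizes the closed point, so the (generization-stable) image is all of $\Spec{R_\pp}$; thus $R_\pp \to S_\pp$ is faithfully flat, and a faithfully flat ring epimorphism is an isomorphism (if $A \to B$ is an epimorphism with cokernel $C$ then $B \otimes_A C = 0$, and faithful flatness forces $C = 0$, while also making $A \to B$ injective). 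Hence $\Dcal$ is compactly generated.

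Finally, the last assertion is formal: by \cref{epi-biloc} the rule $f \mapsto \D_{\Xcal_f}$ is a bijection between equivalence classes of pseudoflat ring epimorphisms $R \to S$ and definable $\otimes$-ideals of $\D(R)$ closed under cohomology, so $\D(R)$ satisfies $\RTC$ --- i.e.\ every such $\otimes$-ideal is compactly generated --- exactly when $\D_{\Xcal_f}$ is compactly generated for every pseudoflat epimorphism $f$, which by the equivalence $(i) \Leftrightarrow (ii)$ holds exactly when every pseudoflat epimorphism is flat. I expect the main technical point to be the argument in $(ii) \Rightarrow (i)$ that $\D_V \cap \Mod R[0]$ is closed under injective envelopes --- in particular the reduction from the Koszul generators of $\Kcal_V$ to the cyclic modules $R/I$ and the bookkeeping with the essential extension $M \subseteq E(M)$; the remaining ingredients are either formal consequences of the machinery of the earlier sections or standard facts about flat ring epimorphisms.
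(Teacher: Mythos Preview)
Your proof is correct and follows essentially the same strategy as the paper's: both directions go through \cref{flat-injenv} and the stalk-local machinery of \cref{biloc-stalks}/\cref{cg-stalklocal}. The only differences are that you make two steps self-contained where the paper cites external sources: in $(ii)\Rightarrow(i)$ you spell out the injective-envelope closure of $\D_V \cap \Mod R[0]$ (the paper invokes \cite[Lemma~3.3]{Hrb20}), and in $(i)\Rightarrow(ii)$ you prove $R_\pp \to S_\pp$ is an isomorphism for $\pp \in \supph(\lambda_\Dcal)$ via the going-down/faithfully-flat-epimorphism argument, whereas the paper appeals to \cite[Theorem~2.2]{GP08} to identify the relevant Thomason set directly. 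One minor remark: your first invocation of going-down (for the global map $\Spec S \to \Spec R$) is not actually used, since \cref{cg-stalklocal}(i) does not require $\supph(\lambda_\Dcal)$ to be generization-closed; only the local instance for $R_\pp \to S_\pp$ is needed.
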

\begin{proof}
    Recall from \cref{S:three} and \cref{S:four} that for the big tt-category $\D(R)$, both the Balmer spectrum $\Spec{\D(R)^\cpt}$ and the homological spectrum $\Spech{\D(R)}$ are homeomorphic to the Zariski spectrum $\Spec{R}$.

    $(i) \implies (ii):$ Since $f: R \to S$ is flat, it is easily identified with the right $\otimes$-idempotent associated to the definable $\otimes$-ideal $\D_{\Xcal_f} \cong \D(S)$ via \cref{ttftriples}. Let $V$ be the Thomason set corresponding to the hereditary torsion class $\Tcal = \Ker f$ of finite type via \cite[Theorem 2.2]{GP08}. In other words, for any $\pp \in \Spec R$ we have $\pp \not\in V$ if and only if $f \otimes_R R_\pp$ is an isomorphism if and only if $k(\pp) \otimes_R S \neq 0$. Then $\supph(S) = V^c$ and for each $\pp \in \supph(S)$ we have $R_\pp \in \D_{\Xcal_f}$. By \cref{Ep}(2), we have $\D_{\Xcal_f} \subseteq \D_V$. By \cref{cg-stalklocal}$(iv)$ and \cref{tc-stalklocal}, $\D_V = \Def{\bigoplus_{\pp \not\in V}R_\pp} \subseteq \D_{\Xcal_f}$, and thus $\D_{\Xcal_f} = \D_V$ is compactly generated.

    \noindent
    $(ii) \implies (i):$ If $\D_{\Xcal_f}$ is compactly generated then $\Xcal_f \cong \Mod S$ is closed under injective envelopes computed in $\Mod R$ by \cite[Lemma 3.3]{Hrb20}. It follows that $S$ is flat over $R$ by \cref{flat-injenv}.
\end{proof}
\begin{ex}
  \cref{flat-tc} gives a generalization of the equivalence $(1) \iff (2)$ of Bazzoni and Šťovíček \cite[Theorem 7.2]{BS17} to all commutative rings. It also shows that the result  \cite[Proposition 4.5]{AMSVT} of Angeleri-H\"{u}gel, Marks, Šťovíček, Vitória, and Takahashi follows alternatively from the fact that $\TC$ holds in $\D(R)$ for a commutative noetherian ring $R$, the result of Neeman \cite{Nee92}. 
\end{ex}
The following very slightly extends the setting of the counterexample to $\TC$ first constructed by Keller \cite{Kel94}. Here, note that a surjective ring epimorphism $R \to R/I$ is pseudoflat if and only if the ideal $I$ is \newterm{idempotent}, that is, $I=I^2$. Indeed, applying $R/I \otimes_R -$ to the exact sequence $0 \to I \to R \to R/I \to 0$ yields an isomorphism $I/I^2 \cong R/I \otimes_R I \cong \Tor^1_R(R/I,R/I)$.
\begin{prop}\label{Keller-ex}
  Let $R$ be a local ring with a proper non-zero idempotent ideal $I$. Then $\D(R)$ fails $\RTC$.
\end{prop}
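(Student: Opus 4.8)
The plan is to produce, using \cref{flat-tc}, a single pseudoflat ring epimorphism out of $R$ which fails to be flat. The natural candidate is the canonical surjection $f\colon R\to R/I$. First I would observe that $f$ is a ring epimorphism, being surjective, and that it is pseudoflat: as recorded just above the statement, $\Tor_1^R(R/I,R/I)\cong I/I^2$, and this vanishes since $I=I^2$ by hypothesis. Via \cref{epi-biloc}, $f$ then corresponds to a definable $\otimes$-ideal $\D_{\Xcal_f}$ of $\D(R)$ which is closed under cohomology.

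The substantive step is to show that $f$ is not flat, i.e.\ that $R/I$ is not a flat $R$-module. I would argue directly: if $R/I$ were flat, then $\Tor_1^R(R/I,R/(a))=0$ for every $a\in R$. Applying $R/I\otimes_R-$ to the short exact sequence $0\to(a)\to R\to R/(a)\to 0$, the long exact sequence identifies $\Tor_1^R(R/I,R/(a))$ with the kernel of the map $(a)/aI\to R/I$ induced by the inclusion $(a)\subseteq R$; for $a\in I$ one has $(a)\subseteq I$, so this map is zero and the kernel is all of $(a)/aI$. Hence $(a)=aI$, and in particular $a=ae$ for some $e\in I$. Since $I$ is proper and $R$ is local, $e\in\mm$, so $1-e$ is a unit, and $a(1-e)=0$ forces $a=0$. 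As $a\in I$ was arbitrary, $I=0$, contradicting the hypothesis $I\neq 0$. (Alternatively, one can invoke the standard fact that a finitely generated flat module over a local ring is free: $R/I$, being cyclic and flat, would be either $0$ or isomorphic to $R$, and comparing annihilators gives $I=R$ or $I=0$ — again a contradiction.)

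Finally, by the last assertion of \cref{flat-tc}, the existence of a pseudoflat epimorphism $f\colon R\to R/I$ that is not flat shows that $\D(R)$ fails $\RTC$; equivalently, by the equivalence $(i)\Leftrightarrow(ii)$ of \cref{flat-tc}, the definable $\otimes$-ideal $\D_{\Xcal_f}$, which is closed under cohomology, is not compactly generated. The only genuinely non-formal ingredient is the flatness argument, and even that is a short $\Tor$ computation; everything else is an application of the dictionaries of \cref{epi-biloc} and \cref{flat-tc}. In particular this recovers Keller's counterexample to $\TC$, since $\TC$ for $\D(R)$ implies $\RTC$.
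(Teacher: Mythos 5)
Your proof is correct and follows essentially the same route as the paper: take the surjection $f\colon R\to R/I$, observe it is a pseudoflat epimorphism by the idempotency of $I$, show it is not flat, and conclude via \cref{flat-tc}. The only difference is that you give a self-contained $\Tor$ computation (and the finitely-generated-flat-over-local-is-free alternative) for the non-flatness of $R/I$, where the paper simply cites a reference for this standard fact.
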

\begin{proof}
  By the discussion above, $f: R \to R/I$ is a pseudoflat ring epimorphism. On the other hand, $f$ is not flat, because $R/I$ is not a flat $R$-module, see e.g. \cite[04PU]{Stacks}. Therefore, $\RTC$ fails by \cref{flat-tc}.
\end{proof}
\section{Separation in local rings}
In view of \cref{tc-scheme}, the study of $\TC$ in $\D(X)$ reduces to inspection of the definable $\otimes$-ideal $\Def{k}$ generated by the residue field $k=R/\mm$ of a local ring $(R,\mm)$. Let $\widehat{R} = \varprojlim_{n>0}R/\mm^n$ denote the $\mm$-adic completion of $R$. It is easy to observe that $\Def{k} = \Def{\widehat{R}}$: One inclusion follows from the isomorphism $\widehat{R} \otimes_R^\mathbf{L} k \cong k$, while the other follows from the Milnor triangle $\widehat{R} \to \prod_{n>0}R/\mm^n \to \prod_{n>0}R/\mm^n \mathrel{\leadsto}$ and the fact that $R/\mm^n$ is a finite extension of (possibly infinite) coproducts of copies of $R/\mm$. Thus, it is natural to study relations between $\TC$ and the $\mm$-adic topology. 

We say that $R$ is \newterm{($\mm$-adically) separated} if $\bigcap_{n>0}\mm^n = 0$, or equivalently, if the canonical map $R \to \widehat{R}$ is a monomorphism. Less standardly, let us say that $R$ is \newterm{purely separated} if the canonical map $R \to \widehat{R}$ is a pure monomorphism in $\Mod R$. For the theory of purity in module categories, we refer to \cite{Prest}, but we remark the standard fact that a short exact sequence in $\Mod R$ is pure if and only if the induced triangle is pure in $\D(R)$. In particular, any $R$ is purely separated whenever it is \newterm{complete}, meaning that the canonical map $R \to \widehat{R}$ is an isomorphism.

We extend the $n$-th ideal power $\mm^n$ to an arbitrary ordinal power following a similar idea as in \cite{Sto10}. Every ordinal $\alpha \geq \omega$ can be written uniquely as a sum $\lambda + n$ of a limit ordinal $\lambda$ and $n < \omega$. We define $\mm^\alpha$ inductively by putting $\mm^\lambda = \bigcap_{\beta < \lambda}\mm^\beta$ and $\mm^\alpha = (\mm^\lambda)^n$. We say that $R$ is \newterm{transfinitely separated} if there is an ordinal $\alpha$ such that $\mm^\alpha = 0$. We say that $R$ is \newterm{purely transfinitely separated} if it is transfinitely separated, and if for every limit ordinal $\alpha$ the canonical map $R/\bigcap_{\beta<\alpha}\mm^\beta \to \varinjlim_{\beta<\alpha}R/\mm^\beta$ is a pure monomorphism. Finally, we say that $R$ is \newterm{purely derived transfinitely separated} if it is transfinitely separated, and if for every limit ordinal $\alpha$ the canonical map $R/\bigcap_{\beta<\alpha}\mm^\beta \to \holim_{\beta<\alpha}R/\mm^\beta$ is a pure monomorphism in $\D(R)$, here $\holim$ is the right derived functor of $\varprojlim$. Note that by taking the zero cohomology, purely derived transfinitely separated implies purely transfinitely separated. The converse implication holds if $R$ is separated, because the inverse system $(R/\mm^n)$ consists of epimorphisms, and thus $\holim_{n>0}R/\mm^n \cong \varprojlim_{n>0}R/\mm^n$ since the Mittag-Leffler condition is satisfied.
\begin{lemma}\label{idemp-trans}
  A local ring $R$ is transfinitely separated if and only if there is no non-zero proper idempotent ideal in $R$.
\end{lemma}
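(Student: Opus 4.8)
The strategy is to pin down a canonical idempotent ideal, namely the eventual value $J$ of the transfinite descending chain $(\mm^\beta)_\beta$, and to relate it both to transfinite separatedness and to arbitrary idempotent ideals.

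\textbf{The stable value.} First I would check that $\beta\mapsto\mm^\beta$ is weakly decreasing: passing from $\mm^{\lambda+n}$ to $\mm^{\lambda+n+1}$ only multiplies by $\mm^\lambda$ and hence shrinks the ideal, while at a limit ordinal $\mm^\lambda=\bigcap_{\beta<\lambda}\mm^\beta$ is an intersection. Since the ideals of $R$ form a set, the chain stabilizes, so there is a limit ordinal $\gamma$ with $\mm^\delta=\mm^\gamma$ for all $\delta\geq\gamma$; set $J=\mm^\gamma$. Because $\gamma$ is a limit ordinal, $J^2=(\mm^\gamma)^2=\mm^{\gamma+2}=J$, so $J$ is idempotent; and $J\subseteq\mm^1=\mm\subsetneq R$, so $J$ is proper. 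Finally $R$ is transfinitely separated if and only if $J=0$: if $\mm^\alpha=0$ then, replacing $\alpha$ by $\max(\alpha,\gamma)$ and using that the chain is decreasing, $J=\mm^\gamma=\mm^\alpha=0$, while the converse is trivial.

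\textbf{The two implications.} If $R$ has no non-zero proper idempotent ideal, then the proper idempotent ideal $J$ is forced to be $0$, hence $R$ is transfinitely separated. For the converse, assume $\mm^\alpha=0$ and let $I$ be a proper idempotent ideal; since $R$ is local, $I\subseteq\mm$. I would then show $I\subseteq\mm^\beta$ for every ordinal $\beta$ by transfinite induction, so that $I\subseteq\mm^\alpha=0$ and $I=0$. The limit step is immediate from $\mm^\lambda=\bigcap_{\beta<\lambda}\mm^\beta$. For a successor $\beta=\gamma+1$, using $I=I^2\subseteq\mm^\gamma I$ (inductive hypothesis at $\gamma$), it remains to see $\mm^\gamma I\subseteq\mm^{\gamma+1}$: if $\gamma<\omega$ then $\mm^{\gamma+1}=\mm^\gamma\mm\supseteq\mm^\gamma I$ since $I\subseteq\mm$; if $\gamma$ is a limit ordinal then $\mm^{\gamma+1}=(\mm^\gamma)^1=\mm^\gamma$; and if $\gamma=\lambda+m$ with $\lambda$ a limit ordinal and $m\geq1$ then $\mm^{\gamma+1}=(\mm^\lambda)^{m+1}=\mm^\gamma\mm^\lambda$ and $I\subseteq\mm^\lambda$ by the inductive hypothesis at $\lambda<\gamma$.

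\textbf{Where the work is.} The argument is essentially bookkeeping; the one point that needs care is the nonstandard convention $\mm^{\lambda+n}=(\mm^\lambda)^n$ at successors of limit ordinals — in particular $\mm^{\lambda+1}=\mm^\lambda$ — which is precisely what lets the successor step of the induction go through with no further hypotheses, and which one must respect when computing $J^2$. One should also make sure the stabilization of the chain is argued at the level of the set of ideals of $R$ rather than the proper class of ordinals.
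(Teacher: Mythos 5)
Your proof is correct and takes essentially the same approach as the paper's: show that the transfinite chain $(\mm^\beta)$ stabilizes at an idempotent proper ideal $J$, and conversely that any proper idempotent ideal lies inside every $\mm^\beta$ by transfinite induction. Your care about the convention $\mm^{\lambda+1}=\mm^\lambda$ at limit ordinals is genuinely worthwhile: the paper's own proof asserts that $\mm^\beta\supsetneq\mm^{\beta+1}$ whenever $\mm^\beta\neq 0$ and no non-zero ideal is idempotent, but this strict inclusion fails at a limit ordinal $\beta$ (only $\mm^{\beta+2}=(\mm^\beta)^2\subsetneq\mm^\beta$ holds), whereas your argument via the stable value $J=\mm^\gamma$ with $\gamma$ a limit ordinal, together with $J^2=\mm^{\gamma+2}=\mm^\gamma=J$, sidesteps this cleanly.
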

\begin{proof}
  If there is a non-zero idempotent ideal $I$ of $R$ then, by definition, $I \subseteq \mm^\alpha$ for any ordinal $\alpha$, and so $R$ is not transfinitely separated. On the other hand, if no non-zero ideal is idempotent then we have for any ordinal $\beta$ that $\mm^\beta$ is non-zero that $\mm^\beta \supsetneq \mm^{\beta+1}$. Then there must be some ordinal $\alpha$ with $\mm^\alpha = 0$.
\end{proof}
\begin{lemma}\label{sep-tc}
  Let $R$ be a commutative ring. If $\D(R)$ satisfies $\RTC$ then $R_\pp$ is transfinitely separated for all $\pp \in \Spec{R}$.
\end{lemma}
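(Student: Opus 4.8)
I would argue by contraposition, combining \cref{flat-tc} with \cref{idemp-trans}. Suppose $R_\pp$ is not transfinitely separated for some $\pp \in \Spec{R}$. By \cref{idemp-trans}, $R_\pp$ then contains a non-zero proper idempotent ideal $I$. The plan is to produce from $I$ a pseudoflat ring epimorphism $R \to S$ that is not flat; by \cref{flat-tc} this forces $\D(R)$ to fail $\RTC$, which is the desired contradiction.

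The candidate is the composite $f \colon R \to R_\pp \to R_\pp/I$. It is a ring epimorphism, being the composite of the localization epimorphism with the surjection $R_\pp \twoheadrightarrow R_\pp/I$. The key computational input is flat base change for $\mathsf{Tor}$ along $R \to R_\pp$: for $R_\pp$-modules $M$ and $N$ one has $\Tor^R_i(M,N) \cong \Tor^{R_\pp}_i(M,N)$, which follows by applying $R_\pp \otimes_R -$ to an $R$-projective resolution $P_\bullet \to M$ --- this stays exact since $R_\pp$ is $R$-flat, becomes a complex of flat $R_\pp$-modules, and resolves $R_\pp \otimes_R M = M$. Applying this with $M = N = R_\pp/I$ and using the standard isomorphism $\Tor^{R_\pp}_1(R_\pp/I,R_\pp/I) \cong I/I^2$ (obtained as in the discussion preceding \cref{Keller-ex}, over $R_\pp$), we get $\Tor^R_1(R_\pp/I,R_\pp/I) = 0$, so $f$ is pseudoflat.

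Next I would check that $f$ is not flat, i.e.\ that $R_\pp/I$ is not a flat $R$-module. If it were, the same base-change isomorphism (now with $M = R_\pp/I$ and $N$ an arbitrary $R_\pp$-module) would show $\Tor^{R_\pp}_1(R_\pp/I,N) = 0$ for all such $N$, i.e.\ $R_\pp/I$ would be flat over the local ring $R_\pp$. But $R_\pp/I$ is cyclic, hence finitely generated, so being flat over a local ring it would be free; a cyclic free module is isomorphic to $R_\pp$ or to $0$, whence $I = \Ann_{R_\pp}(R_\pp/I)$ would equal $0$ or $R_\pp$, contradicting that $I$ is non-zero and proper. Thus $f$ is a pseudoflat epimorphism which is not flat, and \cref{flat-tc} gives that $\D(R)$ does not satisfy $\RTC$.

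I do not expect a real obstacle here: the only point needing slight care is the compatibility of $\mathsf{Tor}$ with the localization $R \to R_\pp$, which is routine once the resolutions are chosen correctly. One could instead phrase the whole argument as descent of $\RTC$ along $R \to R_\pp$ followed by \cref{Keller-ex} applied to $R_\pp$, but establishing such a descent statement appears to be no shorter than the direct construction above.
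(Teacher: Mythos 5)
Your proof is correct, and it is essentially the same strategy as the paper's (idempotent ideal in $R_\pp$ $\Rightarrow$ pseudoflat non-flat ring epimorphism $\Rightarrow$ failure of $\RTC$ via \cref{flat-tc}), but you handle the reduction to the local case more explicitly and, in fact, more cleanly. The paper's proof is the one-liner ``By \cref{tc-scheme} we can assume towards contradiction that $R$ is local and not transfinitely separated. Then combine \cref{idemp-trans} with \cref{Keller-ex}.'' Citing \cref{tc-scheme} for the localization step is a little awkward, since that theorem concerns $\TC$ rather than $\RTC$; what is really needed is that a failure of $\RTC$ at a stalk propagates to $R$, and you supply exactly this by forming the composite epimorphism $R \to R_\pp \to R_\pp/I$ and checking pseudoflatness and non-flatness over $R$ directly via the flat base-change isomorphism $\Tor^R_i(M,N)\cong\Tor^{R_\pp}_i(M,N)$ for $R_\pp$-modules $M,N$. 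In effect you have inlined the content of \cref{Keller-ex} and replaced the appeal to \cref{tc-scheme} with a short, self-contained argument. Two small remarks: your flat base-change computation and the deduction that a cyclic flat module over the local ring $R_\pp$ is $0$ or $R_\pp$ are both fine (the latter uses that a finitely generated flat module over a local ring is free), and the final observation that $\Ann_{R_\pp}(R_\pp/I)=I$ pins down $I\in\{0,R_\pp\}$, giving the contradiction as required.
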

\begin{proof}
  By \cref{tc-scheme} we can assume towards contradiction that $R$ is local and not transfinitely separated. Then combine \cref{idemp-trans} with \cref{Keller-ex}.
\end{proof}
\begin{lemma}\label{psep-tc}
  Let $R$ be a commutative ring. If $R_\pp$ is purely transfinitely separated for all $\pp \in \Spec R$ then $\D(R)$ satisfies $\RTC$. If $R_\pp$ is purely derived transfinitely separated for all $\pp \in \Spec R$ then $\D(R)$ satisfies $\TC$.

  In particular, if $R_\pp$ is purely separated (e.g., if $R_\pp$ is complete) for all $\pp \in \Spec R$ then $\D(R)$ satisfies $\TC$.
\end{lemma}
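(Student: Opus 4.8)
The plan is to reduce both statements to a local ring and then to run a transfinite induction along the filtration $\{\mm^\beta\}_\beta$, with the successor steps governed by Nakayama and the limit steps by the assumed purity.

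For the Telescope Conjecture, \cref{tc-scheme} tells us that $\D(R)$ satisfies $\TC$ exactly when $\Deft{k(\pp)}=\D(R_\pp)$ for every $\pp\in\Spec R$; since every $R_\pp$ is purely derived transfinitely separated by hypothesis, it suffices to show that a local purely derived transfinitely separated ring $(R,\mm,k)$ has $\Def{k}=\D(R)$. I would use throughout that $\Def{k}$, being a definable $\otimes$-ideal, is closed under coproducts, products, triangles and pure monomorphisms (\cref{def-ideal}), and recall $\Def{k}=\Def{\widehat R}$. The key step is a transfinite induction showing that the stalk complex $R/\mm^\beta$ lies in $\Def{k}$ for every ordinal $\beta$; as $\mm^\alpha=0$ for some $\alpha$, this gives $R=R/\mm^\alpha\in\Def{k}$. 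The base case is clear. For a successor the short exact sequence $0\to\mm^\beta/\mm^{\beta+1}\to R/\mm^{\beta+1}\to R/\mm^\beta\to 0$ has left-hand term annihilated by $\mm$, hence a $k$-vector space, hence a coproduct of copies of $k\in\Def{k}$, so closure under extensions forces $R/\mm^{\beta+1}\in\Def{k}$. For a limit ordinal $\lambda$, the object $\holim_{\beta<\lambda}R/\mm^\beta$ lies in $\Def{k}$, being built from the $R/\mm^\beta$ $(\beta<\lambda)$ by products and Milnor triangles; the hypothesis provides a pure monomorphism $R/\mm^\lambda\to\holim_{\beta<\lambda}R/\mm^\beta$ in $\D(R)$, whence $R/\mm^\lambda\in\Def{k}$. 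This completes the induction, so $\Def{k}=\D(R)$ and $\TC$ holds by \cref{tc-scheme}.

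For the Restricted Telescope Conjecture, \cref{flat-tc} reduces the claim to showing that every pseudoflat ring epimorphism $f\colon R\to S$ is flat, and since flatness is local on $\Spec R$ and the localization of a pseudoflat epimorphism is again one, I would assume $R$ local and purely transfinitely separated. Writing $\Xcal=\Xcal_f\subseteq\Mod R$ for the corresponding extension-closed bireflective subcategory (\cref{epi-biloc}) — which is closed under all limits and colimits and, being definable, under pure submodules (\cref{rem-wide}) — it suffices by \cref{flat-injenv} to prove that $\Xcal$ is closed under injective envelopes. When $\mm S\neq S$ one has $k\cong S\otimes_R k\in\Xcal$, and the transfinite induction above, carried out inside $\Mod R$ with $\holim_{\beta<\lambda}$ replaced by $\varprojlim_{\beta<\lambda}$ and the module-theoretic purity hypothesis in place of the derived one, shows $R/\mm^\beta\in\Xcal$ for all $\beta$, whence $R\in\Xcal$; that is, $f$ is an isomorphism and a fortiori flat. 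The remaining case $\mm S=S$, where $k\notin\Xcal$, has to be handled separately, using that $\Spec S$ sits inside the punctured spectrum of $R$ and that the torsion class $\Ker f$ already contains $k$. Finally, the closing assertion of the lemma is the special case in which the maps appearing at the limit stages are pure for trivial reasons, e.g. when $R_\pp\to\widehat{R_\pp}$ is a pure monomorphism for every $\pp$ — automatic when $R_\pp$ is complete.

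The step I expect to be the main obstacle is the limit step: pinning down why precisely the derived (resp. module-theoretic) purity of $R/\mm^\lambda\to\holim_{\beta<\lambda}R/\mm^\beta$ is exactly what the argument needs, verifying that $\holim_{\beta<\lambda}R/\mm^\beta$ still belongs to $\Def{k}$ for limit ordinals of uncountable cofinality, and — on the $\RTC$ side — disposing of the case $\mm S=S$ that the induction does not reach.
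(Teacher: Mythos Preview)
Your argument for $\TC$ coincides with the paper's: reduce via \cref{tc-scheme} and run the transfinite induction inside $\Def{k}$. Your worry about $\holim$ of uncountable cofinality is not a real obstacle, since any such $\holim$ is the Milnor limit of the partial totalizations of the cosimplicial replacement, each of which is a finite limit of products of the $R/\mm^\beta$; closure under products and triangles suffices.

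For $\RTC$, your route through \cref{flat-tc} is different from the paper's, and the case $\mm S=S$ you single out is a genuine gap. Already for $R$ a DVR and $S$ its fraction field one has $\mm S=S$, and there $f_\mm$ is flat but not an isomorphism, so the transfinite induction cannot conclude; your sketch via the punctured spectrum does not obviously complete to a proof. The paper avoids the case distinction entirely. It runs the same transfinite induction inside the smallest definable \emph{wide} subcategory of $\Mod{R_\pp}$ containing $k(\pp)$ (see \cref{rem-wide}), replacing $\holim$ by $\varprojlim$. The (implicit) reduction to local rings is the stalk-locality \cref{biloc-stalks}, in the form of \cref{cg-stalklocal}(i): a definable $\otimes$-ideal $\Dcal$ of $\D(R)$ is compactly generated once $R_\pp\in\Dcal$ for every $\pp$ with $k(\pp)\in\Dcal$. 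For a cohomology-closed $\Dcal$ the intersection $\Dcal\cap\Mod{R_\pp}$ is then a definable wide subcategory of $\Mod{R_\pp}$ containing $k(\pp)$, so the module-level induction yields $R_\pp\in\Dcal$ directly. Primes with $k(\pp)\notin\Dcal$ never enter the argument, which is precisely what your approach via flatness of $S_\pp$ cannot arrange.
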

\begin{proof}
  By \cref{tc-scheme}, it is enough to show that for a purely derived transfinitely separated local ring $R$ with a residue field $k$ that $\Dcal := \Def{k} = \D(R)$. In the $\RTC$ case, we take $\Dcal$ to be the smallest definable wide subcategory (cf. \cref{rem-wide}) of $\Mod R$ containing $k$ instead. By induction on ordinal $\alpha$ we show that $R/\mm^\beta \in \Dcal$. If $\alpha = \beta+1$, $R/\mm^\alpha$ is an extension of $R/\mm^\beta$ and the $k$-module $\mm^\beta/\mm^\alpha$. Since $k \in \Dcal$, we have $R/\mm^\alpha \in \Dcal$. If $\alpha$ is a limit ordinal, we have by the assumption that $R/\mm^\alpha$ purely embeds into $\varprojlim_{\beta<\alpha}R/\mm^\beta$ in $\Mod R$ or to $\holim_{\beta<\alpha}R/\mm^\beta$ in $\D(R)$, depending on the assumption. Recall that $\Dcal$ is closed under pure monomorphisms and $\varprojlim$ (resp., $\holim$) when $\Dcal$ is a definable wide (resp. thick) subcategory of $\Mod R$ (resp., $\D(R)$). In either case, we get that $R/\mm^\alpha \in \Dcal$. Since there is $\alpha$ such that $R/\mm^\alpha = R$, we are done.
\end{proof}
We will see that neither \cref{sep-tc} nor \cref{psep-tc} can be reversed. In fact, we construct a separated local ring $(R,\mm,k)$ such that $\Def{k} \neq \D(R)$ in \cref{ex-1} and a separated, but not purely separated, zero-dimensional local ring satisfying $\TC$ in \cref{ex-2}.
\begin{lemma}\label{pure-fp}
  A local ring $R$ is purely separated if and only if each finitely presented $R$-module $F$ is separated, that is, the map $F \to \varprojlim_{n>0}F/\mm^n F$ is monic, or equivalently, $\bigcap_{n>0}\mm^n F = 0$.
\end{lemma}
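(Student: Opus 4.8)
The plan is to reduce the lemma to the characterization of pure monomorphisms by tensoring with finitely presented modules, together with a comparison between $\widehat{R}\otimes_{R}F$ and the $\mm$-adic completion of $F$ for finitely presented $F$; the whole difficulty will be concentrated in one half of that comparison.

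First I would recall the standard fact (cf.\ \cite{Prest}) that a map $g\colon A\to B$ in $\Mod R$ is a pure monomorphism if and only if $F\otimes_{R}g$ is a monomorphism for every finitely presented $R$-module $F$, and the trivial observation that $\ker\bigl(F\to\varprojlim_{n>0}F/\mm^{n}F\bigr)=\bigcap_{n>0}\mm^{n}F$ (which settles the ``or equivalently'' clause). Applying the purity criterion to the canonical ring map $g\colon R\to\widehat{R}$, the map $F\otimes_{R}g$ is exactly $F=F\otimes_{R}R\to F\otimes_{R}\widehat{R}=\widehat{R}\otimes_{R}F$, and the composite of this with the natural map
\[
c_{F}\colon\ \widehat{R}\otimes_{R}F\ \longrightarrow\ \widehat{F}:=\varprojlim_{n>0}F/\mm^{n}F
\]
(induced by the projections $\widehat{R}\to R/\mm^{n}$) is the canonical completion map $F\to\widehat{F}$.

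With this in place, the implication ``$\bigcap_{n>0}\mm^{n}F=0$ for all finitely presented $F$'' $\Longrightarrow$ ``$R$ purely separated'' is immediate and needs nothing about $c_{F}$: the completion map $F\to\widehat{F}$ is then monic, so its first factor $F\otimes_{R}g$ is monic for every finitely presented $F$, whence $g$ is a pure monomorphism (it is in particular injective, the case $F=R$), i.e.\ $R$ is purely separated. For the converse I would use that, for finitely presented $F$, the comparison map $c_{F}$ is an \emph{isomorphism}; granting this, if $R$ is purely separated then $g$ is a pure monomorphism, so $F\otimes_{R}g\colon F\to\widehat{R}\otimes_{R}F$ is monic for every finitely presented $F$, and composing with the isomorphism $c_{F}$ shows $F\to\widehat{F}$ is monic, i.e.\ $\bigcap_{n>0}\mm^{n}F=0$.

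The main obstacle is precisely the comparison isomorphism $c_{F}\colon\widehat{R}\otimes_{R}F\toeq\widehat{F}$ for finitely presented $F$. Surjectivity of $c_{F}$ holds with no hypotheses, extracted from a finite presentation $R^{a}\to R^{b}\to F\to0$ by right-exactness of $-\otimes_{R}\widehat{R}$ and the surjectivity of the transition maps in the tower $(R^{b}/\mm^{n}R^{b})_{n}$; it is the \emph{injectivity} of $c_{F}$ that carries the real content, amounting to the claim that the finitely generated $\widehat{R}$-submodule of $\widehat{R}^{b}$ generated by the relations of $F$ is closed for the $\mm$-adic topology. When $\mm$ is finitely generated this follows from the Artin--Rees lemma in the usual way; in general this is the point I expect to require genuine care, and I would establish it by a $\varprojlim^{1}$-vanishing argument on the relation module of $F$ (or appeal to a reference covering the non-noetherian, non-finitely-generated case).
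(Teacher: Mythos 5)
Your strategy matches the paper's: reduce the lemma to comparing $\widehat{R}\otimes_{R}F$ with the $\mm$-adic completion $\widehat{F}=\varprojlim_{n}F/\mm^{n}F$ via the natural map $c_{F}$, observe that the ``easy'' implication (separation of f.p.\ modules implies pure separation) needs only that $F\to F\otimes_{R}\widehat{R}$ factors through $F\to\widehat{F}$, and then use that $c_{F}$ is an isomorphism to obtain the converse. Your treatment of the easy direction and of surjectivity of $c_{F}$ is correct and is essentially what the paper does.

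The genuine gap is that you do not prove injectivity of $c_{F}$, which is the entire content of the nontrivial implication; you explicitly defer it (``I expect this to require genuine care\ldots or appeal to a reference''). The paper does supply an argument. Fix a free presentation $R^{m}\xrightarrow{f}R^{k}\to F\to 0$, let $I_{n}$ denote the \emph{image} of $R/\mm^{n}\otimes_{R}f$, and form the tower of short exact sequences
\[
0 \to I_{n} \to (R/\mm^{n})^{k} \to F/\mm^{n}F \to 0.
\]
The crucial observation is that the transition maps $I_{n+1}\to I_{n}$ are \emph{surjective} (they are induced by the surjections $(R/\mm^{n+1})^{m}\twoheadrightarrow(R/\mm^{n})^{m}$), so $\{I_{n}\}$ satisfies Mittag--Leffler and $\varprojlim^{1}I_{n}=0$. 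Passing to inverse limits thus yields the exact sequence $0\to\varprojlim I_{n}\to\widehat{R}^{k}\to\widehat{F}\to 0$, and comparing with $\widehat{R}^{m}\to\widehat{R}^{k}\to\widehat{R}\otimes_{R}F\to 0$ identifies $\widehat{F}$ with $\widehat{R}\otimes_{R}F$. Note that this is a genuinely different choice of tower from the one your sketch gestures at: the ``relation module'' tower $K_{n}=\ker\bigl((R/\mm^{n})^{m}\to(R/\mm^{n})^{k}\bigr)$ has no obvious surjective transition maps, so its $\varprojlim^{1}$-vanishing is not free, whereas the image tower $\{I_{n}\}$ is manifestly Mittag--Leffler. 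You should carry out the paper's computation (or an equivalent one) rather than leave the injectivity of $c_{F}$ as a promissory note, since it is precisely the step the lemma turns on.
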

\begin{proof}
  By definition of a pure monomorphism in $\Mod R$, the ring $R$ is purely separated if and only if $(R \to \widehat{R}) \otimes_R F$ is a monomorphism for any finitely presented $R$-module $F$. We prove the claim by showing that $\widehat{R} \otimes_R F$ is naturally isomorphic to $\varprojlim_{n>0}F/\mm^n F$. Let $f: R^m \to R^k$ be a free presentation of $F$. This induces an inverse system of morphisms
  $$
\begin{tikzcd}[sep = huge]
(R/\mm^n)^m \arrow{r}{R/\mm^n \otimes_R f} & (R/\mm^n)^k \\
(R/\mm^{n+1})^m \arrow{r}{R/\mm^{n+1} \otimes_R f} \arrow{u} & (R/\mm^{n+1})^k \arrow{u}
\end{tikzcd}
  $$
  where the vertical arrows are projections. Passing to the inverse limit yields the morphism $\widehat{R} \otimes_R f$ whose cokernel is $\widehat{R} \otimes_R F$.
  
  \noindent
  Let $I_n$ be the image of the map $R/\mm^n \otimes_R f$, and note that the commutative square above yields the induced epimorphism $I_{n+1} \to I_n$. We thus obtain an inverse system of short exact sequences
  $$0 \to I_n \to (R/\mm^n)^k \to F/\mm^n \to 0.$$
  Since the leftmost inverse system consists of epimorphisms, it is Mittag-Leffler, and so passing to the inverse limits yields a short exact sequence
  $$0 \to \varprojlim_{n>0} I_n \to \varprojlim_{n>0} (R/\mm^n)^k \to \varprojlim_{n>0} F/\mm^n \to 0.$$ Thus, we identified $\varprojlim_{n>0} F/\mm^n$ with $\widehat{R} \otimes_R F$.
\end{proof}
\begin{ex}
  If $R$ is a local noetherian ring then it is a standard fact that $R$ is purely separated, see e.g. \cite[00MC, 08WP]{Stacks}. In view of \cref{tc-scheme}, it follows that $\D(X)$ satisfies $\TC$ for any quasi-compact quasi-separated scheme with noetherian stalks. This recovers the $\TC$ results of Neeman \cite{Nee92} and Alonso, Jeremías, and Souto \cite[Corollary 5.4]{AJS04}, and of Stevenson \cite[Theorem 4.25]{Ste14} and Bazzoni and Šťovíček \cite[end of Section \S 7]{BS17} (a commutative ring is von Neumann regular precisely when all of its stalks are field).
\end{ex}
\begin{ex}
  Let $R$ be a valuation domain, that is, an integral domain whose ideals form a chain. We claim that $R$ is transfinitely separated if and only if $R$ is purely transfinitely separated. Indeed, assume in view of \cref{idemp-trans} that $R$ has no non-zero idempotent ideal. Let $\alpha$ be a limit ordinal and put $R' = R/\mm^\alpha$. For any $s \in R \setminus \mm^\alpha$ there is $\beta<\alpha$ such that $s \not\in \mm^\beta$. But then $s^n \not\in (\mm^\beta)^n$ by the fact that elements are totally ordered by divisibility in a valuation domain, and then  $s^n \not \in \mm^{\beta+n} \supseteq \mm^\alpha$. It follows that $R'$ is a domain, and so a valuation domain again. We can thus reduce to the case $\mm^\alpha = 0$.
  
  Now consider the monomorphism $R \to \varprojlim_{\beta < \alpha}R/\mm^\beta$. Since the ideals form a linear chain in a valuation domain, this map is the same as the $R$-adic completion $R \to \widetilde{R} = \varprojlim_{r \in R'}R'/rR'$ with respect to the topology generated by all non-zero ideals of $R'$. The completion $R \to \widetilde{R}$ is a pure monomorphism by \cite[\S VIII, Lemma 3.1]{FSbook}. It follows from \cref{psep-tc} that $\RTC$ holds for a valuation domain $R$ if and only if $R$ admits no non-zero idempotent ideal (which is equivalent to each localization $R_\pp$ having no non-zero idempotent ideal for a valuation domain), such valuation domains are called \newterm{strongly discrete}. Since $\RTC$ is equivalent to $\TC$ for valuation domains by \cite[Theorem 3.10]{BS17}, this recovers \cite[Theorem 7.2]{BS17}.
\end{ex}
\begin{lemma}\label{idemp-factor}
  Let $R$ be a local ring and $I$ a finitely generated ideal of $R$ such that $R/I$ is not transfinitely separated. Then $\Deft{k} \neq \D(R)$, and thus $\D(R)$ fails $\TC$.

  Moreover, if $I$ is a principal ideal generated by a non-zerodivisor of $R$ then $\D(R)$ fails $\RTC$.
\end{lemma}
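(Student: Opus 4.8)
The plan is to reduce to Keller's example (\cref{Keller-ex}) over the quotient ring $R/I$ and then to transport the resulting obstruction back to $\D(R)$ along base change; the finite generation of $I$ is exactly what makes the transport go through.

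\emph{Opening observation.} Since $R$ is local, the only Thomason subset of $\Spec{R}$ not containing $\mm$ is $\emptyset$, so by \cref{Balm-cg} the only compactly generated definable $\otimes$-ideal of $\D(R)$ containing $k$ is $\D(R)$ itself. Hence \emph{any} proper definable $\otimes$-ideal of $\D(R)$ that contains $k$ is automatically not compactly generated, and if it is moreover closed under cohomology it witnesses the failure of $\RTC$ via \cref{flat-tc}. Therefore it suffices to exhibit a proper definable $\otimes$-ideal $\Dcal\subseteq\D(R)$ with $k\in\Dcal$ (then $\Deft{k}\subseteq\Dcal\subsetneq\D(R)$, and $\TC$ fails by \cref{tc-scheme} applied at the closed point of $\Spec{R}$), and, under the stronger hypothesis on $I$, a proper \emph{cohomology-closed} one.

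\emph{Producing the obstruction over $R/I$.} By \cref{idemp-trans}, since $A:=R/I$ is not transfinitely separated it carries a proper nonzero idempotent ideal $\bar J=J/I$, so $I\subsetneq J\subsetneq R$ and $J=J^2+I\subseteq\mm$. Exactly as in the proof of \cref{Keller-ex}, $A\to A/\bar J=R/J$ is a pseudoflat (as $\bar J$ is idempotent) but non-flat (as $A$ is local and $\bar J$ is not generated by an idempotent element) ring epimorphism, so by \cref{epi-biloc} and \cref{flat-tc} the cohomology-closed definable $\otimes$-ideal $\Dcal_A:=\D_{\Mod{(A/\bar J)}}$ of $\D(A)$ is not compactly generated, hence $\Dcal_A\subsetneq\D(A)$; and since $J\subseteq\mm$ the field $k$ is an $A/\bar J$-module, so $k\in\Dcal_A$.

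\emph{Transporting to $\D(R)$.} Let $\iota^{*}=A\otimes_R^{\mathbf{L}}-\colon\D(R)\to\D(A)$ be base change; it is exact, strong monoidal, coproduct-preserving, preserves pure monomorphisms (as $\yo(\iota^{*}X)=\yo A\otimes\yo X$ with $\yo A$ flat in $\Mod{\Tcal^\cpt}$), and is essentially surjective up to summands (every $Y\in\D(A)$ is a summand of $\iota^{*}\iota_{*}Y$ because the multiplication $A\otimes_R^{\mathbf{L}}A\to A$ is split). Put $\Dcal_R:=(\iota^{*})^{-1}(\Dcal_A)$. Then $\Dcal_R$ is a thick $\otimes$-ideal closed under coproducts and pure monomorphisms, it contains $k$ (the cohomology of $\iota^{*}k$ consists of $k$-vector spaces, hence lies in $\Dcal_A$), and it is proper: $\Dcal_R=\D(R)$ would give $\iota^{*}(\D(R))\subseteq\Dcal_A$ and, taking summands, $\Dcal_A=\D(A)$. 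It remains to see $\Dcal_R$ is a genuine definable $\otimes$-ideal; since $\D(R)$ has a model, by \cref{def-ideal} one only needs closure under products. Here finite generation of $I$ enters: when $I$ is generated by a regular sequence — in particular when $I=(t)$ with $t$ a non-zerodivisor — $A=R/I$ is perfect, hence rigid in $\D(R)$, so $\iota^{*}\cong[A^{\vee},-]$ preserves products. \emph{The main obstacle is the general finitely generated case}, where $R/I$ need not be perfect; there product-closure of $\Dcal_R$ must be checked more delicately, e.g.\ at the level of the restricted Yoneda functor, using that $R/I$ is pseudocoherent with finitely generated cohomology so that the relevant Tor-modules are compatible with products.

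\emph{The "moreover".} Assume now $I=(t)$ with $t$ a non-zerodivisor, so $\Dcal_R$ is definable by the above. What is still needed is a proper \emph{cohomology-closed} definable $\otimes$-ideal containing $k$, equivalently (by \cref{epi-biloc} and \cref{rem-wide}) a proper extension-closed bireflective subcategory of $\Mod{R}$ containing $k$; let $\Xcal_0$ be the smallest such, so $\Deft{k}\subseteq\D_{\Xcal_0}$ and it suffices to prove $R\notin\Xcal_0$. One has $\widehat R\in\Xcal_0$ (built from the $R/\mm^{n}$ by products and kernels), and the argument would exploit that $t$ is a non-zerodivisor to compare $\Xcal_0$ with the analogous closure $\bar\Xcal_0\subseteq\Mod{(R/tR)}$, which is proper by the previous step (it is contained in $\Mod{((R/tR)/\bar J)}$): one shows $R\in\Xcal_0\Rightarrow R/tR\in\bar\Xcal_0$, a contradiction. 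Once $R\notin\Xcal_0$ is established, $\D_{\Xcal_0}$ is a proper cohomology-closed definable $\otimes$-ideal containing $k$, hence not compactly generated by the opening observation, so $\RTC$ fails by \cref{flat-tc} (concretely, the induced $R\to S_0$ is a pseudoflat non-flat epimorphism). I expect the comparison $R\in\Xcal_0\Rightarrow R/tR\in\bar\Xcal_0$ to be the delicate point, since the bireflective-plus-extension closure operations in $\Mod{R}$ do not restrict verbatim to $\Mod{(R/tR)}$.
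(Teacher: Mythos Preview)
Your base-change strategy has a real gap exactly where you flag it. For a general finitely generated $I$ the module $R/I$ is finitely presented but need \emph{not} be pseudocoherent: that would require every syzygy to be finitely generated, which fails over a non-coherent $R$ (e.g.\ $I=(x_1)$ in $k[x_1,x_2,\ldots]/(x_ix_j:i\neq j)$ localized at the origin, where already the first syzygy $\Ann(x_1)=(x_2,x_3,\ldots)$ is infinitely generated). So $\iota^{*}=R/I\otimes_R^{\mathbf{L}}-$ need not preserve products and your $\Dcal_R$ need not be definable. The paper's fix is precisely to replace $R/I$ by the Koszul complex $K(I)$, which is always perfect; via Koszul self-duality the paper's class can be rewritten as $\Dcal=\{X: J\cdot H^n(K(I)\otimes_R^{\mathbf{L}}X)=0\ \forall n\}$, i.e.\ the $K(I)$-variant of your $\Dcal_R$. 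The price of this substitution is that $K(I)\otimes_R^{\mathbf{L}}-$ no longer lands in $\D(A)$, so your free extension-closure (inherited from $\Dcal_A$ via exactness of $\iota^{*}$) is lost and must be recovered by hand: one writes $j=\sum_k j_1^k j_2^k + i$ using idempotency of $\bar J$, and a two-step diagram chase together with $I\cdot\Hom_{\D(R)}(K(I),-)=0$ shows that multiplication by $j$ vanishes on the middle term of any triangle with outer terms in $\Dcal$.

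For the ``moreover'' you take an unnecessary and incomplete detour. When $I=(t)$ with $t$ a non-zerodivisor one has $K(I)\simeq R/tR$, so your own $\Dcal_R$ \emph{is} a definable $\otimes$-ideal by your perfectness argument, and the K\"unneth sequence
\[
0\to H^n(X)/tH^n(X)\to H^n(R/tR\otimes_R^{\mathbf{L}}X)\to \Ker\bigl(H^{n+1}(X)\xrightarrow{\cdot t}H^{n+1}(X)\bigr)\to 0
\]
shows it is closed under cohomology (idempotency $\bar J=\bar J^2$ is used again here, to see that extensions of $\bar J$-annihilated $R/tR$-modules remain $\bar J$-annihilated). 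This is exactly the paper's argument; the associated bireflective subcategory is $\{M\in\Mod R:\ J\cdot\Coker(M\xrightarrow{\cdot t}M)=0=J\cdot\Ker(M\xrightarrow{\cdot t}M)\}$. Your $\Xcal_0$-reduction is not needed.
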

\begin{proof}
  By \cref{idemp-trans}, there is a non-trivial idempotent ideal $\overline{J}$ of $R/I$, let $J$ denote its full preimage in $R$. Define
  $$\Dcal = \left\{X \in \D(R) \middle\vert \begin{array}{c}\Hom_{\D(R)}(K(I),\Sigma^n X) \xrightarrow{\cdot j} \Hom_{\D(R)}(K(I),\Sigma^n X) \\ \text{ is a zero map $\forall j \in J, n \in \Zbb$} \end{array} \right\}.$$
  We claim that $\Dcal$ is a definable $\otimes$-ideal. Since $K(I)$ is a compact object, it is easy to check that $\Dcal$ is closed under pure subobjects. Clearly, $\Dcal$ is closed under all products and shifts. It remains to show that $\Dcal$ is closed under extensions (recall that any localizing subcategory of $\D(R)$ is closed under tensoring). Let $X \to Y \to Z \to \Sigma X$ be a triangle such that $X, Z \in \Dcal$. We need to show that the multiplication by any $j \in J$ on $\Hom_{\D(R)}(K(I),Y)$ is zero. Since $\overline{J}$ is idempotent in $R/I$, there are $j_1^k, j_2^k \in J$ for $k=1,\ldots,l$ and $i \in I$ such that $j = \sum_{k=1}^l j_1^k j_2^k + i$. Consider the following commutative diagram with exact rows:
  $$
  \begin{tikzcd}
    \Hom_{\D(R)}(K(I),X) \arrow{r} \arrow{d}{\cdot j_1^k} & \Hom_{\D(R)}(K(I),Y) \arrow{d}{\cdot j_1^k} \arrow{r} & \Hom_{\D(R)}(K(I),Z) \arrow{d}{\cdot j_1^k} \\
    \Hom_{\D(R)}(K(I),X) \arrow{d}{\cdot j_2^k} \arrow{r} & \Hom_{\D(R)}(K(I),Y) \arrow{d}{\cdot j_2^k} \arrow{r} & \Hom_{\D(R)}(K(I),Z) \arrow{d}{\cdot j_2^k} \\
    \Hom_{\D(R)}(K(I),X) \arrow{r} & \Hom_{\D(R)}(K(I),Y) \arrow{r} & \Hom_{\D(R)}(K(I),Z) \\
  \end{tikzcd}
  $$
  Since $X,Z \in \Dcal$, both the vertical maps on the left and on the right are zero maps. A simple diagram chasing shows that the map $\Hom_{\D(R)}(K(I),Y) \xrightarrow{\cdot j_1^k} \Hom_{\D(R)}(K(I),Y)$ factorizes through $\Hom_{\D(R)}(K(I),X)$, and then the composition $\Hom_{\D(R)}(K(I),Y) \xrightarrow{\cdot j_1^k} \Hom_{\D(R)}(K(I),Y) \xrightarrow{\cdot j_2^k} \Hom_{\D(R)}(K(I),Y)$ is zero. Since $\Hom_{\D(R)}(K(I),Y)$ is annihilated by $I$, the sum of compositions $\sum_{k=1}^l j_1^k j_2^k$ is equivalent to $\Hom_{\D(R)}(K(I),Y) \xrightarrow{\cdot j} \Hom_{\D(R)}(K(I),Y)$.

  \noindent
  Now clearly the residue field $k$ at the maximal ideal of $R$ belongs to $\Dcal$. On the other hand, $R \not\in \Dcal$. Indeed, this would mean that the multiplication map $K(I)^* \xrightarrow{\cdot j} K(I)^*$ induces a zero map on all cohomologies, where $K(I)^* = \RHom_R(K(I),R)$ is the dual Koszul complex. This would imply that the map $R/I  \xrightarrow{\cdot j} R/I$ is zero for all $j \in J$, which cannot be true for any $j \in J \setminus I$. Then we have $\Def{k} \subseteq \Dcal \subsetneq \D(R)$, and so $\D(R)$ fails $\TC$ by \cref{tc-scheme}.

  To prove the final claim, let $I$ be a principal ideal generated by a non-zerodivisor $x \in R$. Then $\Dcal$ is closed under cohomology by the Künneth formula, and so $\D(R)$ fails $\RTC$. In fact, the bireflective extension-closed subcategory of $\Mod R$ corresponding to $\Dcal$ via \cref{epi-biloc} is given by all $R$-modules $M$ such that both the kernel and the cokernel of the multiplication map $M \xrightarrow{\cdot x}M$ are annihilated by $J$.
\end{proof}

\begin{ex}\label{ex-1}
  There is a separated local integral domain $(R,\mm,k)$ with $x \in R$ such that the maximal ideal $\overline{\mm}$ of $R/(x)$ is non-zero and idempotent. Indeed, let $\Zbb_2[[x]]$ be the ring of formal power series of the two element field $\Zbb_2$ and consider the subset 
  $$B=\{\sum_{i=1}^\infty x^{2^{n_i}} \mid (n_i)_{i > 0} \text{ strictly increasing sequence of non-negative integers}\}$$
  of $\Zbb_2[[x]]$. Note that any element $\sum_{i=1}^\infty x^{2^{n_i}} \in B$ such that $n_1>0$ has a square root $\sum_{i=1}^\infty x^{2^{n_i-1}} \in B$ in $\Zbb_2[[x]]$. Since $B$ is of uncountable cardinality, there is $\alpha_0 \in B$ which is transcendental over $\Zbb_2[x]$, that is, if $F(y) \in \Zbb_2[x][y]$ is such that $F(\alpha_0) = 0$ then $F=0$. Let us define a sequence of elements $\alpha_n \in B$ for $n>0$ inductively as follows: If $\alpha_n$ has a non-vanishing linear monomial, let $f_n = x$, otherwise let $f_n = 0$. Then there is $\alpha_{n+1} \in B$ such that $\alpha_{n+1}^2 = \alpha_n + f_n$. Let $$R = \Zbb_2[x,\alpha_n : n \geq 0]_{(x,\alpha_n : n \geq 0)}$$ be the subalgebra of $\Zbb_2[[x]]$ generated by the elements $x$ and $\alpha_n$ for $n \geq 0$ and localized at the maximal ideal $(x,\alpha_n : n \geq 0)$, the intersection of the maximal ideal $(x)$ of $\Zbb_2[[x]]$ with $\Zbb_2[x,\alpha_n : n \geq 0]$. 
  
  Then the inclusion $R \subseteq \Zbb_2[[x]]$ is a local ring morphism, in particular, $R$ is separated as $\Zbb_2[[x]]$ is local noetherian. We claim that the maximal ideal $\overline{\mm}$ of $R/(x)$ is non-zero and idempotent. The idempotency follows from the construction as $\overline{\mm}$ is generated by the images of elements $\alpha_n, n \geq 0$ and we get the relations $\alpha_{n+1}^2 = \alpha_n$ in $R/(x)$. It remains to show that the maximal ideal $\mm$ of $R$ is not generated by $x$. To this end, we will show that $x^{-1}\alpha_0 \not\in R$. The relations yield that for each $m<n$, $\alpha_m = \alpha_n^{2^{n-m}} + x H_{m,n}(x,\alpha_n)$ for some polynomial $H(x,y) \in \Zbb_2[x,y]$. If $x^{-1}\alpha_0 \in R$ then there are polynomials $F(x,y),G(x,y) \in \Zbb_2[x,y]$ with $F(x,y)$ having a non-zero constant term such that $x^{-1}(\alpha_n^{2^n} + x H_{0,n}(x,\alpha_n)) = \frac{G(x,\alpha_n)}{F(x,\alpha_n)}$ for some $n \geq 0$. Then we get the following polynomial relation in $\alpha_n$ over $\Zbb_2[x]$
  $$\alpha_n^{2^n}F(x,\alpha_n) + x (H_{0,n}(x,\alpha_n)F(x,\alpha_n) + G(x,\alpha_n)) = 0$$
  that is non-trivial due to $F(x,y)$ having a non-zero constant term, this contradicts $\alpha_n$ being transcendental over $\Zbb_2[x]$.

  By \cref{idemp-factor}, $\D(R)$ fails $\RTC$. Then there is a non-flat pseudoflat ring epimorphism $f: R \to S$ by \cref{flat-tc}. This is a local ring morphism (as $\mm \in \supph(S)$) which is an epimorphism but is not surjective, the last property follows from $R$ having no non-zero idempotent ideal by \cref{idemp-trans} and the discussion before \cref{Keller-ex}. Lazard \cite{Laz69} (see also \cite[06RH]{Stacks}) constructed a non-surjective local ring epimorphism over a ring of Krull dimension zero. We do not know whether such an example which is additionally pseudoflat can be realized over a zero-dimensional ring --- Lazard's example cannot be pseudoflat by \cref{ex:trunc}.
\end{ex}
\subsection{Zero-dimensional local rings}
Finally, we limit the focus of our study even more and consider the case of a zero-dimensional local ring $R$, that is, an affine scheme with a single point. Here, \cref{tc-scheme} yields a particularly simple characterization of $\TC$: It holds in $\D(R)$ if and only if $\Def{k} = \D(R)$, where $k$ is the unique residue field. 

Recall that $R$ is called \newterm{coherent} if the finitely presented $R$-modules form a wide subcategory of $\Mod R$. Following Colby \cite{Co75}, an $R$-module $M$ is called \newterm{$\aleph_0$-injective} if $\Ext_R^1(R/I,M) = 0$ for all finitely generated ideals $I$ of $R$. We say that $R$ is \newterm{self-$\aleph_0$-injective} is it is $\aleph_0$-injective over itself.
\begin{lemma}
  Let $R$ be a local ring which is coherent, and such that $R$ is self-$\aleph_0$-injective. Then $R$ is separated if and only if it is purely separated.
\end{lemma}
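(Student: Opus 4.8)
The plan is to reduce everything to the classical fact that an \emph{absolutely pure} (equivalently, FP-injective) module is a pure submodule of \emph{every} module containing it. The genuinely new work will be to verify that, under coherence plus self-$\aleph_0$-injectivity, $R$ is absolutely pure over itself; the separatedness hypothesis will then serve only to provide an embedding of $R$ into $\widehat R$ on which absolute purity can act.

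The implication ``purely separated $\Rightarrow$ separated'' is immediate, since a pure monomorphism is in particular a monomorphism (or apply \cref{pure-fp} with $F=R$). For the converse, assume $R$ is separated, so that $\iota\colon R\to\widehat R$ is a monomorphism; I must upgrade $\iota$ to a pure monomorphism. The heart of the matter is the claim that $R$ is FP-injective over itself, i.e.\ $\Ext_R^1(F,R)=0$ for every finitely presented $F$. Granting this, $R$ is absolutely pure (the standard equivalence; see \cite{Prest}), hence a pure submodule of any $R$-module it embeds into, and in particular of $\widehat R$ via $\iota$ --- which is exactly the assertion that $R$ is purely separated. Alternatively, one may stay closer to \cref{pure-fp}: purity of $\iota$ means that $F\cong R\otimes_R F\to\widehat R\otimes_R F\cong\varprojlim_n F/\mm^n F$ is monic for every finitely presented $F$, i.e.\ $\bigcap_n\mm^n F=0$, using the computation of $\widehat R\otimes_R F$ from the proof of \cref{pure-fp}.

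It remains to prove the claim, and this is where both hypotheses are used. Over a coherent ring, FP-injectivity of $R$ can be tested against cyclic modules $R/I$ with $I$ finitely generated, i.e.\ it is equivalent to $R$ being self-$\aleph_0$-injective in the sense of Colby \cite{Co75}. Concretely, writing a finitely presented module as $R^k/K$ with $K$ finitely generated, one has $\Ext_R^1(R^k/K,R)\cong\Coker(\Hom_R(R^k,R)\to\Hom_R(K,R))$, so one must extend every $\phi\colon K\to R$ to $R^k\to R$. I would argue by induction on $k$: projecting onto the last coordinate, $K':=K\cap R^{k-1}$ is again finitely generated because $R$ is coherent; by the inductive hypothesis the restriction $\phi|_{K'}$ extends to $R^{k-1}\to R$, and, after subtracting this extension (prolonged by zero on the last coordinate), the remaining homomorphism factors through the finitely generated ideal $K/K'\hookrightarrow R$, where self-$\aleph_0$-injectivity furnishes the final extension; the base case $k=1$ is precisely self-$\aleph_0$-injectivity.

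I expect this inductive descent from all finitely presented modules to cyclic ones to be the only genuinely non-formal step; everything else is bookkeeping with purity, and in fact the whole claim is standard in the theory of coherent rings and could alternatively be cited directly from \cite{Co75}.
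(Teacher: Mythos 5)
Your proof is correct, and it takes a genuinely different route from the paper's. The paper invokes Colby's Theorems 1 and 2 in the form ``every finitely presented module embeds in a finite free module,'' then observes that a submodule of the separated module $R^k$ is again separated (since $\bigcap_n \mm^n F \subseteq \bigcap_n \mm^n R^k = 0$), and concludes via \cref{pure-fp}. You instead extract from the hypotheses that $R$ is FP-injective (absolutely pure) over itself, which makes \emph{every} monomorphism out of $R$ pure, so in particular $R \hookrightarrow \widehat{R}$ is pure; in its primary form this bypasses \cref{pure-fp} entirely. Your inductive descent from finitely presented modules to cyclic ones is correct (the coherence hypothesis is used exactly where you use it, to keep $K' = K \cap R^{k-1}$ finitely generated), and it amounts to re-proving the part of Colby's results that the paper cites wholesale; as you note, you could just as well cite \cite{Co75} for the equivalence of self-$\aleph_0$-injectivity and self-FP-injectivity over coherent rings. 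The trade-off is roughly this: the paper's route needs the ``embeds in finite free'' characterization plus the explicit computation $\widehat{R}\otimes_R F \cong \varprojlim F/\mm^n F$ from \cref{pure-fp}, while your route needs only the softer fact that FP-injective modules are pure in every overmodule, at the cost of either re-deriving or separately citing the Baer-type reduction to cyclic modules.
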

\begin{proof}
  By \cite[Theorem 1, Theorem 2]{Co75}, the assumption on $R$ ensures that every finitely presented $R$-module is a submodule in a free $R$-module of finite rank. Then it is easy to see that $R$ separated implies that each finitely presented $R$-module is separated, and so $R$ is purely separated by \cref{pure-fp}.
\end{proof}
\begin{ex}\label{ex:colby}
  Let $R$ be a local ring that can be written as a direct limit $R = \varinjlim_{i \in I}R_i$ of commutative rings such that each $R_i$ is coherent and self-$\aleph_0$-injective and the transition maps $R_i \to R_j$ are all flat for any $i<j$ in $I$. Then $R$ is coherent by \cite[Chap. I, \S 2, p. 62]{BouAC}. We claim that $R$ is also self-$\aleph_0$-injective. Indeed, let $I$ be a finitely generated ideal of $R$ and a map $f: I \to R$. Then there is $i \in I$, a finitely generated ideal $J$ of $R_i$, and a map $g: J \to R_i$, such that $f = g \otimes_{R_i} R$. Since $R_i$ is self-$\aleph_0$-injective, $g$ extends to a map $h: R_i \to R_i$. Then $h \otimes_{R_i} R$ extends $f$ to a map $R \to R$.
\end{ex}
\begin{ex}\label{ex:trunc}
  Let $K$ be a field and $R = K[x_0,x_1,x_2,\ldots]/(x_i^{n_i} : i \geq 0)$ be the truncated polynomial ring for some choice of positive integers $n_i$. We claim that $R$ is purely separated. This implies $\TC$ for $\D(R)$ by \cref{psep-tc}, recovering \cite[Corollary 7.5]{DP08}.

  To demonstrate that $R$ is purely separated, we use \cref{ex:colby}. Indeed, $R = \varinjlim_{n \geq 0}R_k$, where $R_k = K[x_0,x_1,x_2,\ldots,x_k]/(x_i^{n_i} : i=0,1\ldots,k)$. For each $k \geq 0$, the ring $R_k$ is a local Gorenstein artinian ring, that is, a local noetherian self-injective ring. Clearly, the transition maps $R_k \to R_m$ are flat (even projective) for all $k<m$.
\end{ex}
Finally, we provide an example of a separated, but not purely separated, zero-dimensional local ring which satisfies $\TC$.
\begin{lemma}\label{0-dim-tc-quot}
  Let $R$ be a local zero-dimensional ring and assume there is a finitely generated ideal $I$ of $R$ such that $\D(R/I)$ satisfies $\TC$. Then $\D(R)$ satisfies $\TC$.
\end{lemma}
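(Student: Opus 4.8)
The plan is to reduce, via \cref{tc-scheme} applied to the one‑point scheme $\Spec{R}$, to proving $\Deft{k}=\D(R)$; since $\Deft{k}$ is a $\otimes$-ideal this amounts to showing $R\in\Deft{k}$. Write $\pi\colon R\to R/I$ and let $\iota\colon\D(R/I)\to\D(R)$ be restriction of scalars, which sits in an adjoint triple $(-\otimes_R^{\mathbf{L}}R/I)\dashv\iota\dashv\RHom_R(R/I,-)$ and hence preserves both products and coproducts, with $\iota(k)=k$. Since $I=(x_1,\dots,x_n)$ is finitely generated, the Koszul complex $K(I)$ is a compact object of $\D(R)$ concentrated in degrees $-n,\dots,0$ with $\supp(K(I))=V(I)$; as $R$ is local of Krull dimension zero and $I$ is proper (note $I$ must be proper here), $V(I)=\{\mm\}=\Spec{R}$, so $R$ lies in the thick $\otimes$-ideal $\thick{K(I)}$. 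Every cohomology module $H^{j}(K(I))$ is annihilated by $I$, hence is an $R/I$-module, and since $K(I)$ is cohomologically bounded, the finitely many soft-truncation triangles give $K(I)\in\thick{\{H^{j}(K(I)):j\}}$. Thus it suffices to prove each $H^{j}(K(I))\in\Deft{k}$, and for this it is enough to establish the stronger claim that $\iota$ carries all of $\D(R/I)$ into $\Deft{k}$ (then apply it to the $R/I$-module $H^{j}(K(I))$, which equals $\iota$ of itself).

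To prove $\iota(\D(R/I))\subseteq\Deft{k}$, I set $\Ecal=\{X\in\D(R/I)\mid\iota(X)\in\Deft{k}\}$ and show it is a definable $\otimes$-ideal of $\D(R/I)$ containing $k$. It is thick because $\iota$ is triangulated and $\Deft{k}$ is thick; it is closed under products because $\iota$ preserves them and $\Deft{k}$ is product-closed (\cref{def-ideal}); and it contains $k$ since $\iota(k)=k$. For closure under pure monomorphisms, the adjunction gives $\Hom_{\D(R)}(c,\iota X)\cong\Hom_{\D(R/I)}(c\otimes_R^{\mathbf{L}}R/I,X)$ naturally in $X$ for $c\in\D(R)^\cpt$, so $\yo(\iota X)$ is the restriction of the $\D(R/I)$-module $\yo X$ along the functor $-\otimes_R^{\mathbf{L}}R/I\colon\D(R)^\cpt\to\D(R/I)^\cpt$; restriction of modules along a functor is exact, hence preserves monomorphisms, so $\iota$ sends pure monomorphisms to pure monomorphisms and $\Ecal$ inherits closure under them from $\Deft{k}$. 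Finally $\Ecal$ is a $\otimes$-ideal: the projection formula yields $\iota(X)\otimes_R^{\mathbf{L}}\iota(Z)\cong\iota\bigl(X\otimes_{R/I}^{\mathbf{L}}Z\otimes_{R/I}^{\mathbf{L}}(R/I\otimes_R^{\mathbf{L}}R/I)\bigr)$ for $X,Z\in\D(R/I)$, while the unit axiom for the monoid $R/I$ in $(\D(R),\otimes_R^{\mathbf{L}})$ exhibits $R/I$ as a retract of $R/I\otimes_R^{\mathbf{L}}R/I$ in $\D(R/I)$, so $X\otimes_{R/I}^{\mathbf{L}}Z$ is a retract of the bracketed object and therefore $\iota(X\otimes_{R/I}^{\mathbf{L}}Z)$ is a retract of $\iota(X)\otimes_R^{\mathbf{L}}\iota(Z)$; when $X\in\Ecal$ the latter lies in the $\otimes$-ideal $\Deft{k}$, hence so does the retract, i.e. $X\otimes_{R/I}^{\mathbf{L}}Z\in\Ecal$. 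Since $\D(R/I)$ has a model, \cref{def-ideal} makes $\Ecal$ a definable $\otimes$-ideal containing $k$, so $\Deft{k}_{\D(R/I)}\subseteq\Ecal$; and by the hypothesis together with \cref{tc-scheme}, $\Deft{k}_{\D(R/I)}=\D(R/I)$, whence $\Ecal=\D(R/I)$, which finishes the argument.

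I expect the main obstacle to be exactly the verification that $\Ecal$ is a $\otimes$-ideal: restriction of scalars along $\pi$ is not a monoidal functor, so the mere fact that $\iota(X)\in\Deft{k}$ does not visibly control $\iota(X\otimes_{R/I}^{\mathbf{L}}Z)$. The repair is the combination of the projection formula with the canonical splitting $R/I\to R/I\otimes_R^{\mathbf{L}}R/I\to R/I$ coming from the ring structure of $R/I$, which rewrites $\iota(X\otimes_{R/I}^{\mathbf{L}}Z)$ as a retract of an object manifestly in $\Deft{k}$; every other point (thickness, products, pure monomorphisms, the reduction through $K(I)$) is a routine closure-property or support-theoretic check.
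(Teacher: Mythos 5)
Your proof is correct, and it takes a genuinely different route from the paper's. The paper first uses that a finitely generated ideal in a zero-dimensional local ring is nilpotent to reduce $R\in\Deft{k}$ to $R/I\in\Deft{k}$ by a finite filtration, and then establishes $R/I\in\Deft{k}$ in one step via the right $\otimes$-idempotent formalism: it observes that $(R\to\lambda_{\Deft{k}})\otimes_R^{\mathbf{L}}R/I$ is a right $\otimes$-idempotent of $\D(R/I)$ whose associated definable $\otimes$-ideal contains $k$, hence (by $\TC$ for $\D(R/I)$ and \cref{tc-scheme}) equals $\D(R/I)$, so $R/I\to\lambda_{\Deft{k}}\otimes_R^{\mathbf{L}}R/I$ is an isomorphism and $R/I\in\Deft{k}$. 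You instead replace the nilpotency step by the Hopkins--Neeman--Thomason fact that $R\in\thick{K(I)}$ once $V(I)=\Spec R$, reduce $K(I)$ to its cohomology modules (which are $R/I$-modules), and then prove the stronger statement that restriction of scalars carries \emph{all} of $\D(R/I)$ into $\Deft{k}$, by exhibiting $\Ecal=\iota^{-1}(\Deft{k})$ as a definable $\otimes$-ideal of $\D(R/I)$ containing $k$. Your verification that $\Ecal$ is a $\otimes$-ideal — combining the projection formula with the retraction $R/I\to R/I\otimes_R^{\mathbf{L}}R/I\to R/I$ — is exactly the kind of substitute for the idempotent computation that the paper's argument hides; both ultimately exploit the ring structure on $R/I$, but yours does so at the level of definable subcategories (via \cref{def-ideal}) rather than $\otimes$-idempotents (via \cref{ttftriples}). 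The paper's route is shorter; yours is more self-contained, avoids the idempotent machinery, and records the reusable fact that pseudo-finite restriction of scalars preserves membership in the definable $\otimes$-ideal generated by $k$. (Both arguments, like the paper's, tacitly use that $I$ is proper, so that $R/I\neq 0$; you noted this explicitly, which is fine.)
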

\begin{proof}
  Let $\Dcal = \Def{k}$, where $k$ is the unique residue field of $R$. It suffices by \cref{tc-scheme} to show that $R/I \in \Dcal$. Indeed, since $I$ is finitely generated and $R$ is zero-dimensional, $I$ is nilpotent. It follows that $R$ is obtained as a finite extension of $R/I$-modules, and so $R \in \Dcal$, yielding $\Dcal = \D(R)$.

  \noindent
  Since $\D(R/I)$ satisfies $\TC$ and $k$ is also the residue field of the local ring $R/I$, we have $\overline{\Dcal} = \D(R/I)$ by \cref{tc-scheme}, where $\overline{\Dcal}$ is the smallest definable $\otimes$-ideal in $\D(R/I)$ containing $k$. Then also $R/I \in \Dcal$. Indeed, it is straightforward to check that $(R \to \lambda_\Dcal(R)) \otimes_R^\mathbf{L} R/I$ is a right $\otimes$-idempotent in $\D(R/I)$, and thus identifies with $R/I \to \lambda_{\overline{\Dcal}}$. Since $R/I \to \lambda_{\overline{\Dcal}} \cong \lambda_\Dcal(R) \otimes_R^\mathbf{L} R/I$ is an isomorphism in $\D(R/I)$, it is also an isomorphism in $\D(R)$.
\end{proof}
\begin{ex}\label{ex-2}
  Let $K$ be a field and consider the truncated polynomial ring $S = K[x_1,x_2,\ldots]/(x_i^2: \forall i>0)$. Put 
  $$T = S[z,y]/(z^2,y^2,z-x_{i+1}y+x_1\cdots x_i: \forall i>0).$$ 
  Let $\nn$ be the maximal ideal of $T$ and put $R = T/\nn^\omega$, then $R$ is a zero-dimensional and separated local ring. Note that $T/(y,z) \cong K[x_2,x_3,\ldots]/(x_i^2: \forall i>1)$ is again a truncated polynomial ring and thus purely separated by \cref{ex:trunc}. In particular, $\nn^\omega \subseteq (y,z)$, and so $R/(y,z) \cong K[x_2,x_3,\ldots]/(x_i^2: \forall i>1)$ is also purely separated. On the other hand, $R/(y)$ is not separated. Indeed, we have the relations $z = x_1\cdots x_i$ for all $i>0$ in $R$, and so it suffices to notice that $z$ is not in $\nn^\omega$ as element of $T$. For this, let us inspect the quotient $T/(x_1) \cong K[y,x_2,x_3,\ldots]/(y^2,x_i^2,x_iy-x_jy: \forall j>i>1)$. This is clearly a separated ring as we are modding out homogenous relations in which the image of $z$ is equal to the non-zero element $x_2 y$.
  
  It follows that $R$ is not purely separated by \cref{pure-fp}, and thus $R$ is a separated, but not purely separated, zero-dimensional local ring which satisfies $\TC$ by \cref{0-dim-tc-quot}.
\end{ex}

\bibliographystyle{amsalpha}
\bibliography{bibitems}

\newcommand{\etalchar}[1]{$^{#1}$}
\providecommand{\bysame}{\leavevmode\hbox to3em{\hrulefill}\thinspace}
\providecommand{\MR}{\relax\ifhmode\unskip\space\fi MR }
\providecommand{\MRhref}[2]{%
  \href{http://www.ams.org/mathscinet-getitem?mr=#1}{#2}
}
\providecommand{\href}[2]{#2}
\begin{thebibliography}{AHM{\v{S}}{\etalchar{+}}20}

\bibitem[AHM{\v{S}}{\etalchar{+}}20]{AMSVT}
Lidia Angeleri~H\"{u}gel, Frederik Marks, Jan {\v{S}}ťov\'{i}\v{c}ek, Ryo Takahashi, and Jorge Vit\'{o}ria, \emph{{Flat ring epimorphisms and universal localizations of commutative rings}}, The Quarterly Journal of Mathematics \textbf{71} (2020), no.~4, 1489--1520.

\bibitem[Ant14]{Ant14}
Benjamin Antieau, \emph{A local-global principle for the telescope conjecture}, Adv. Math. \textbf{254} (2014), 280--299. \MR{3161100}

\bibitem[ATJLS10]{ATJLS10}
Leovigildo Alonso~Tarr\'{\i}o, Ana Jerem\'{\i}as~L\'{o}pez, and Manuel Saor\'{\i}n, \emph{Compactly generated {$t$}-structures on the derived category of a {N}oetherian ring}, J. Algebra \textbf{324} (2010), no.~3, 313--346.

\bibitem[ATJLSS04]{AJS04}
Leovigildo Alonso~Tarr\'{\i}o, Ana Jerem\'{\i}as~L\'{o}pez, and Mar\'{\i}a~Jos\'{e} Souto~Salorio, \emph{Bousfield localization on formal schemes}, J. Algebra \textbf{278} (2004), no.~2, 585--610. \MR{2071654}

\bibitem[Bal05]{Bal05}
Paul Balmer, \emph{The spectrum of prime ideals in tensor triangulated categories}, J. Reine Angew. Math. \textbf{588} (2005), 149--168. \MR{2196732}

\bibitem[Bal07]{Bal07}
\bysame, \emph{Supports and filtrations in algebraic geometry and modular representation theory}, Amer. J. Math. \textbf{129} (2007), no.~5, 1227--1250. \MR{2354319}

\bibitem[Bal10a]{Bal10}
\bysame, \emph{Spectra, spectra, spectra---tensor triangular spectra versus {Z}ariski spectra of endomorphism rings}, Algebr. Geom. Topol. \textbf{10} (2010), no.~3, 1521--1563. \MR{2661535}

\bibitem[Bal10b]{Bal1}
\bysame, \emph{Tensor triangular geometry}, Proceedings of the {I}nternational {C}ongress of {M}athematicians. {V}olume {II}, Hindustan Book Agency, New Delhi, 2010, pp.~85--112. \MR{2827786}

\bibitem[Bal20a]{Bal20b}
\bysame, \emph{Homological support of big objects in tensor-triangulated categories}, J. \'{E}c. polytech. Math. \textbf{7} (2020), 1069--1088. \MR{4136434}

\bibitem[Bal20b]{Bal20}
\bysame, \emph{Nilpotence theorems via homological residue fields}, Tunis. J. Math. \textbf{2} (2020), no.~2, 359--378. \MR{3990823}

\bibitem[Bar20]{Bar20}
Tobias Barthel, \emph{A short introduction to the telescope and chromatic splitting conjectures}, Bousfield classes and {O}hkawa's theorem, Springer Proc. Math. Stat., vol. 309, Springer, Singapore, 2020, pp.~261--273. \MR{4100659}

\bibitem[BC21]{BC20}
Paul Balmer and James~C. Cameron, \emph{Computing homological residue fields in algebra and topology}, Proc. Amer. Math. Soc. \textbf{149} (2021), no.~8, 3177--3185. \MR{4273126}

\bibitem[BCHS23]{BCHS23}
Tobias Barthel, Natalia Castellana, Drew Heard, and Beren Sanders, \emph{Cosupport in tensor triangular geometry}, arXiv preprint arXiv:2303.13480 (2023).

\bibitem[BF11]{BF11}
Paul Balmer and Giordano Favi, \emph{Generalized tensor idempotents and the telescope conjecture}, Proc. Lond. Math. Soc. (3) \textbf{102} (2011), no.~6, 1161--1185. \MR{2806103}

\bibitem[BHLS23]{BHLS23}
Robert Burklund, Jeremy Hahn, Ishan Levy, and Tomer~M Schlank, \emph{K-theoretic counterexamples to ravenel's telescope conjecture}, arXiv preprint arXiv:2310.17459 (2023).

\bibitem[BHS23a]{BDS23}
Tobias Barthel, Drew Heard, and Beren Sanders, \emph{Stratification and the comparison between homological and tensor triangular support}, Q. J. Math. \textbf{74} (2023), no.~2, 747--766. \MR{4596217}

\bibitem[BHS23b]{BHS23}
\bysame, \emph{Stratification in tensor triangular geometry with applications to spectral {M}ackey functors}, Camb. J. Math. \textbf{11} (2023), no.~4, 829--915. \MR{4650265}

\bibitem[BIK11]{BIK11}
Dave Benson, Srikanth~B. Iyengar, and Henning Krause, \emph{Stratifying triangulated categories}, J. Topol. \textbf{4} (2011), no.~3, 641--666. \MR{2832572}

\bibitem[BKS19]{BKS19}
Paul Balmer, Henning Krause, and Greg Stevenson, \emph{Tensor-triangular fields: ruminations}, Selecta Math. (N.S.) \textbf{25} (2019), no.~1, Paper No. 13, 36. \MR{3911737}

\bibitem[BKS20]{BKS20}
\bysame, \emph{The frame of smashing tensor-ideals}, Mathematical Proceedings of the Cambridge Philosophical Society, vol. 168, Cambridge University Press, 2020, pp.~323--343.

\bibitem[Bou79]{Bou79}
A.~K. Bousfield, \emph{The localization of spectra with respect to homology}, Topology \textbf{18} (1979), no.~4, 257--281. \MR{551009}

\bibitem[Bou07]{BouAC}
N.~Bourbaki, \emph{\'{E}l\'{e}ments de math\'{e}matique. {A}lg\`ebre commutative.}, Springer-Verlag, Berlin, 2007, Reprint of the 1998 original. \MR{2333539}

\bibitem[B{\v{S}}17]{BS17}
Silvana Bazzoni and Jan {\v{S}}ťov\'{\i}\v{c}ek, \emph{Smashing localizations of rings of weak global dimension at most one}, Adv. Math. \textbf{305} (2017), 351--401. \MR{3570139}

\bibitem[BW23a]{BW23b}
Isaac Bird and Jordan Williamson, \emph{Definable functors between triangulated categories with applications to tensor-triangular geometry and representation theory}, arXiv preprint arXiv:2310.02159 (2023).

\bibitem[BW23b]{BW23}
\bysame, \emph{The homological spectrum via definable subcategories}, arXiv preprint arXiv:2304.05179 (2023).

\bibitem[Col75]{Co75}
R.~R. Colby, \emph{Rings which have flat injective modules}, J. Algebra \textbf{35} (1975), 239--252. \MR{376763}

\bibitem[DP08]{DP08}
W.~G. Dwyer and J.~H. Palmieri, \emph{The {B}ousfield lattice for truncated polynomial algebras}, Homology Homotopy Appl. \textbf{10} (2008), no.~1, 413--436. \MR{2426110}

\bibitem[FS01]{FSbook}
L\'{a}szl\'{o} Fuchs and Luigi Salce, \emph{Modules over non-{N}oetherian domains}, Mathematical Surveys and Monographs, vol.~84, American Mathematical Society, Providence, RI, 2001. \MR{1794715}

\bibitem[GDLP87]{GdP}
Peter Gabriel and Jose~Antonio De~La~Pena, \emph{Quotients of representation-finite algebras}, Communications in Algebra \textbf{15} (1987), no.~1-2, 279--307.

\bibitem[GP08]{GP08}
Grigory Garkusha and Mike Prest, \emph{Torsion classes of finite type and spectra}, {$K$}-theory and noncommutative geometry, EMS Ser. Congr. Rep., Eur. Math. Soc., Z\"{u}rich, 2008, pp.~393--412. \MR{2513341}

\bibitem[HHZ24]{HHZ21}
Michal Hrbek, Jiangsheng Hu, and Rongmin Zhu, \emph{Gluing compactly generated t-structures over stalks of affine schemes}, Israel Journal of Mathematics (2024), 1--41.

\bibitem[Hov99]{Ho99}
Mark Hovey, \emph{Model categories}, Mathematical Surveys and Monographs, vol.~63, American Mathematical Society, Providence, RI, 1999. \MR{1650134}

\bibitem[Hrb16]{Hone}
Michal Hrbek, \emph{One-tilting classes and modules over commutative rings}, J. Algebra \textbf{462} (2016), 1--22. \MR{3519496}

\bibitem[Hrb20]{Hrb20}
\bysame, \emph{Compactly generated t-structures in the derived category of a commutative ring}, Math. Z. \textbf{295} (2020), no.~1-2, 47--72.

\bibitem[HS99]{HS99}
Mark Hovey and Neil~P Strickland, \emph{Morava $ k $-theories and localisation}, vol. 666, American Mathematical Soc., 1999.

\bibitem[IK13]{IK13}
Srikanth~B Iyengar and Henning Krause, \emph{The bousfield lattice of a triangulated category and stratification}, Mathematische Zeitschrift \textbf{273} (2013), no.~3, 1215--1241.

\bibitem[Kel94]{Kel94}
Bernhard Keller, \emph{A remark on the generalized smashing conjecture}, Manuscripta Math. \textbf{84} (1994), no.~2, 193--198. \MR{1285956}

\bibitem[Kra97]{Kra97}
Henning Krause, \emph{The spectrum of a locally coherent category}, Journal of Pure and Applied Algebra \textbf{114} (1997), no.~3, 259--271.

\bibitem[Kra00]{Kr00}
\bysame, \emph{Smashing subcategories and the telescope conjecture---an algebraic approach}, Invent. Math. \textbf{139} (2000), no.~1, 99--133. \MR{1728877}

\bibitem[Kra10]{Kra10}
\bysame, \emph{Localization theory for triangulated categories}, Triangulated categories, London Math. Soc. Lecture Note Ser., vol. 375, Cambridge Univ. Press, Cambridge, 2010, pp.~161--235.

\bibitem[K{\v{S}}10]{KS10}
Henning Krause and Jan {\v{S}}\v{t}ov\'{\i}\v{c}ek, \emph{The telescope conjecture for hereditary rings via {E}xt-orthogonal pairs}, Adv. Math. \textbf{225} (2010), no.~5, 2341--2364. \MR{2680168}

\bibitem[Lak20]{Lak20}
Rosanna Laking, \emph{Purity in compactly generated derivators and t-structures with {G}rothendieck hearts}, Math. Z. \textbf{295} (2020), no.~3-4, 1615--1641. \MR{4125704}

\bibitem[Laz69]{Laz69}
Daniel Lazard, \emph{Autour de la platitude}, Bull. Soc. Math. France \textbf{97} (1969), 81--128. \MR{254100}

\bibitem[LV20]{LV20}
Rosanna Laking and Jorge Vit\'{o}ria, \emph{Definability and approximations in triangulated categories}, Pacific J. Math. \textbf{306} (2020), no.~2, 557--586.

\bibitem[Nee92]{Nee92}
Amnon Neeman, \emph{The chromatic tower for {$D(R)$}}, with an appendix by M. B\"{o}kstedt, Topology \textbf{31} (1992), no.~3, 519--532.

\bibitem[NS09]{NS09}
Pedro Nicol\'{a}s and Manuel Saor\'{\i}n, \emph{Parametrizing recollement data for triangulated categories}, J. Algebra \textbf{322} (2009), no.~4, 1220--1250. \MR{2537682}

\bibitem[OR70]{OR70}
Ulrich Oberst and Helmut Rohrl, \emph{Flat and coherent functors}, Journal of Algebra \textbf{14} (1970), no.~1, 91--105.

\bibitem[Pau09]{Pau09}
David Pauksztello, \emph{Homological epimorphisms of differential graded algebras}, Comm. Algebra \textbf{37} (2009), no.~7, 2337--2350. \MR{2536923}

\bibitem[Pre09]{Prest}
Mike Prest, \emph{Purity, spectra and localisation}, Encyclopedia of Mathematics and its Applications, vol. 121, Cambridge University Press, Cambridge, 2009. \MR{2530988}

\bibitem[PW24]{PW23}
Mike Prest and Rose Wagstaffe, \emph{Model theory in compactly generated (tensor-) triangulated categories}, Model Theory \textbf{3} (2024), no.~1, 147--197.

\bibitem[Rav84]{Rav84}
Douglas~C. Ravenel, \emph{Localization with respect to certain periodic homology theories}, Amer. J. Math. \textbf{106} (1984), no.~2, 351--414. \MR{737778}

\bibitem[Rou08]{Rou08}
Rapha\"{e}l Rouquier, \emph{Dimensions of triangulated categories}, J. K-Theory \textbf{1} (2008), no.~2, 193--256. \MR{2434186}

\bibitem[San17]{San17}
William~T Sanders, \emph{Support and vanishing for non-noetherian rings and tensor triangulated categories}, arXiv preprint arXiv:1710.10199 (2017).

\bibitem[{Sta}]{Stacks}
The {Stacks Project Authors}, \emph{\textit{Stacks Project}}, \url{https://stacks.math.columbia.edu}.

\bibitem[Ste13]{St13}
Greg Stevenson, \emph{Support theory via actions of tensor triangulated categories}, J. Reine Angew. Math. \textbf{681} (2013), 219--254. \MR{3181496}

\bibitem[Ste14]{Ste14}
\bysame, \emph{Derived categories of absolutely flat rings}, Homology Homotopy Appl. \textbf{16} (2014), no.~2, 45--64. \MR{3234500}

\bibitem[{\v{S}}{ť}10]{Sto10}
Jan {\v{S}}{ť}{o}v{\'\i}{\v{c}}ek, \emph{Telescope conjecture, idempotent ideals, and the transfinite radical}, Transactions of the American Mathematical Society \textbf{362} (2010), no.~3, 1475--1489.

\bibitem[Tho97]{Tho97}
R.~W. Thomason, \emph{The classification of triangulated subcategories}, Compositio Math. \textbf{105} (1997), no.~1, 1--27. \MR{1436741}

\bibitem[Wag21]{WPHD}
Rose Wagstaffe, \emph{Definability in monoidal additive and tensor triangulated categories}, The University of Manchester (United Kingdom), 2021.

\bibitem[Zou23]{Zou23}
Changhan Zou, \emph{Support theories for non-noetherian tensor triangulated categories}, arXiv preprint arXiv:2312.08596 (2023).

\end{thebibliography}
\end{document}